\newtheorem{theorem}{Theorem}[section]
\newtheorem{lemma}[theorem]{Lemma}
\newtheorem{prop}[theorem]{Proposition}
\newtheorem{cor}[theorem]{Corollary}
\theoremstyle{definition}
\newtheorem{definition}[theorem]{Definition}
\newtheorem{remark}[theorem]{Remark}
\newtheorem{example}[theorem]{Example}
\newtheorem{construction}[theorem]{Construction}
\numberwithin{equation}{section}
\renewcommand{\setminus}{\smallsetminus}
\renewcommand{\emptyset}{\varnothing}
\newcommand{\epsi}{\varepsilon}
\newcommand{\onto}{\twoheadrightarrow} %epimorfismo
\newcommand{\GL}{\mathrm{GL}} %general linear group
\newcommand{\SL}{\mathrm{SL}} %general linear group
\DeclareMathOperator{\rank}{rank}
\newcommand{\Gm}{\mathbb{G}_\mathrm{m}}
\newcommand{\al}{\alpha}
\newcommand{\chitop}{\chi_\mathrm{top}}
\newcommand{\contr}{\mathrm{contr}}
\newcommand{\tors}{\mathrm{tors}}
\newcommand{\trop}{\mathrm{trop}}
\newcommand{\bfone}{\boldsymbol{1}}
\DeclareMathOperator{\Vol}{Vol}
\newcommand{\pencilP}{\mathfrak{d}_P}
\newcommand{\minresYP}{\overline{Y}_{P}}
\newcommand{\barDP}{\overline{D}_{P}}
\DeclareMathOperator{\Hom}{Hom} %spazio degli omomorfismi
\DeclareMathOperator{\Spec}{Spec} %spettro di un anello
\DeclareMathOperator{\Proj}{Proj} %spazio proiettivo
\DeclareMathOperator{\Pic}{Pic}
\DeclareMathOperator{\MW}{MW} %Mordell-Weil lattice
\DeclareMathOperator{\NS}{NS} 
\newcommand{\Newt}[1]{\mathrm{Newt}(#1)}
\newcommand{\pow}[1]{[ \!  [ #1 ] \! ]}
\newcommand\cO{\mathcal{O}}
\renewcommand\AA{\mathbb{A}}
\newcommand\CC{\mathbb{C}}
\newcommand\FF{\mathbb{F}}
\newcommand\NN{\mathbb{N}}
\newcommand\PP{\mathbb{P}}
\newcommand\QQ{\mathbb{Q}}
\newcommand\RR{\mathbb{R}}
\newcommand\ZZ{\mathbb{Z}}
\newcommand\rH{\mathrm{H}}
\newcommand\rV{\mathrm{V}}
\newcommand\rmd{\mathrm{d}}
\title{Reflexive polygons and rational elliptic surfaces}
\author[A.~Grassi]{Antonella Grassi}
\address{Dipartimento di Matematica, Universit\`a di Bologna, Piazza di Porta San Donato 5, Bologna 40126, Italy and Department of Mathematics, University of Pennsylvania, Philadelphia, USA}
\email{antonella.grassi3@unibo.it}
\author[G.~Gugiatti]{Giulia Gugiatti}
\address{Math Section, ICTP, Leonardo Da Vinci Building, Strada Costiera 11, 34151 Trieste, Italy}
\email{ggugiatt@ictp.it}
\author[W.~Lutz]{Wendelin Lutz}
\address{Department of Mathematics and Statistics, Lederle Graduate Research Tower, University of Massachusetts, Amherst, MA 01003, USA}
\email{wendelinlutz@umass.edu}
\author[A.~Petracci]{Andrea Petracci}
\address{Dipartimento di Matematica, Universit\`a di Bologna, Piazza di Porta San Donato 5, Bologna 40126, Italy}
\email{a.petracci@unibo.it}
\begin{document}

\begin{abstract}
In this note we study in detail the geometry of eight rational elliptic surfaces naturally associated to the sixteen reflexive polygons. The elliptic fibrations supported by these surfaces correspond under mirror symmetry to the eight families of smooth del Pezzo surfaces with very ample anticanonical bundle.
\end{abstract}

\maketitle

\section{Introduction}

Reflexive polygons are the lattice polygons such that the origin is their unique interior lattice point \cite{batyrev_toric_Fano_3folds, watanabe^2, rabinowitz}.
There are exactly $16$ $\GL_2(\ZZ)$-equivalence classes of reflexive polygons (see Figure~\ref{fig:reflexive_polygons}).

For each reflexive polygon $P$, we can construct two toric del Pezzo surfaces with Gorenstein singularities: $X_P$, associated to the face fan of $P$, and $Y_P$, associated to the normal fan of $P$; these are toric varieties with respect to mutually dual algebraic tori.
By performing a certain blowup of $Y_P$  (see Construction~\ref{constr:2}), we obtain a rational elliptic surface $Y \to \PP^1$, we study the singular fibres of this elliptic fibration and its Mordell--Weil group. Our results are summarised in Table~\ref{tab:summary_singular_fibre} in \S\ref{sec:conclusion}.
The number of rational elliptic surfaces arising this way from reflexive polygons is $8$, as different reflexive polygons can give the same rational elliptic surface.
The Mordell--Weil groups of these rational elliptic surfaces have low rank, precisely $0$ or $1$.

These rational elliptic surfaces are mirror to the $8$ deformation families of smooth del Pezzo surfaces with very ample anticanonical class, i.e.\ $\PP^1 \times \PP^1$ and the blowup of $\PP^2$ in at most $6$ points.

\subsection*{Notation and conventions} \label{sec:notation}

A polytope is the convex hull of finitely many points in a real vector space of finite dimension.
A polygon is by definition a polytope of dimension $2$.

%The set of non-negative integers is denoted by $\NN$.
All varieties we consider are varieties over $\CC$, the field of complex numbers.
%Every ring is commutative with identity.
Every toric variety or toric singularity is assumed to be normal.
A Fano variety is a normal projective variety whose anticanonical divisor is $\QQ$-Cartier and ample.
A del Pezzo surface is a Fano variety of dimension $2$.
By a curve we mean a $1$-dimensional integral scheme of finite type over $\CC$.

The symbol $\chitop$ stands for the topological Euler characteristic.

\subsection*{Acknowledgements}

The last three authors learnt (almost) everything contained in this note during countless conversations with Alessio Corti; they wish to thank him for encouraging them and for generously sharing his ideas about this mathematical subject.
All authors are grateful to the anonymous referee, to Victor Przyjalkowski and especially to Helge Ruddat for useful comments on a previous version of this article.

%\subsection*{Funding}
A.G.\ and A.P.\ acknowledge partial financial support from
INdAM GNSAGA ``Gruppo Nazionale per le Strutture Algebriche, Geometriche e le loro Applicazioni''. Moreover,
the work of A.G.\ is partially supported by PRIN ``Moduli and Lie Theory'' and the work of A.P.\ is partially supported by
PRIN2020 2020KKWT53 ``Curves, Ricci flat Varieties and their Interactions''.

%\subsection*{Conflict of interest} The authors have no competing interests to declare that are relevant to the content of this article.

\section{Preliminaries on rational elliptic surfaces} \label{sec:r.e.s.}

The material in this section can be found in \cite{kodaira_compact_surfaces_2, miranda_persson_extremal, miranda_torsionAquila, schuett_shioda_book}.

\subsection{Setting}

%Throughout section
In \S\ref{sec:r.e.s.} we fix, once for all, a morphism
\[
f \colon Y \to C
\]
such that
\begin{itemize}
	\item $Y$ is a smooth projective %\emph{rational} 
 surface,
        \item $C$ is a smooth projective curve,
	\item $f$ is a relatively minimal elliptic fibration, i.e.\ $f$ is a surjective morphism with connected fibres such that the general fibre of $f$ is a curve of genus $1$ and no $(-1)$-curve of $Y$ is contained in a fibre of $f$,
	\item there exists at least a section of $f$, i.e.\ a morphism $\sigma \colon C \to Y$ such that $f \circ \sigma = \mathrm{id}_{C}$,
	\item $f$ has at least one singular fibre.
\end{itemize}
In this case, the surface $Y$ is called a \emph{(Jacobian) 
	elliptic surface}. In what follows, we will usually drop the word `Jacobian'.

%We will focus on elliptic \emph{rational surfaces}, as we investigate fibrations arising from toric  surfaces, 

\subsection{First properties}
We recall the following well-known classification of singular fibres of $f$.

\begin{theorem}[{%Kodaira~
\cite[Theorem 6.2]{kodaira_compact_surfaces_2}}]
	Let $F$ be a fibre of $f \colon Y \to C$. Then exactly one of the following possibilities holds.
	\begin{itemize}
		\item $F$ is irreducible and reduced, and exactly one of the following possibilities holds: $F$ is a smooth elliptic curve $(I_0)$, $F$ is a nodal cubic $(I_1)$, $F$ is a cuspidal cubic ($II$).
		\item $F$ is reducible, every irreducible component of $F$ is a $(-2)$-curve (i.e.\ a smooth rational curve with self-intersection $-2$), and the configuration of the irreducible components of $F$ has one of the following types as described in Figure~\ref{fig:singular_fibres}: $I_n$ for an integer $n \geq 2$, $III$, $IV$, $I_n^*$ for an integer $n \geq 0$, $II^*$, $III^*$, $IV^*$.
	\end{itemize}
\end{theorem}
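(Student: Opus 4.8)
The plan is to analyse an arbitrary fibre $F$ of $f$, written as $F=\sum_i n_i C_i$ with the $C_i$ its distinct irreducible components and $n_i\ge 1$, using only the adjunction formula together with the elementary numerical properties of fibres. First I would record that every fibre is algebraically equivalent to a general smooth fibre $F_0$, which is disjoint from $F$; hence $F^2=0$ and $F\cdot C_i=0$ for all $i$. Since the arithmetic genus is constant in the flat family of fibres, $p_a(F)=p_a(F_0)=1$, and adjunction gives $K_Y\cdot F=(K_Y+F)\cdot F=2p_a(F)-2=0$. Moreover the existence of a section $\sigma$ forces an irreducible $F$ to be reduced: $\sigma(C)$ meets $F_0$, hence $F$, in a single point, so if $F=nC$ then $n\mid 1$. (No section meets a multiple fibre, so Jacobian fibrations have none.)

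The irreducible case is then immediate: $F$ is an integral projective curve with $p_a(F)=1$, so writing $\nu\colon\widetilde F\to F$ for the normalisation we have $1=p_a(F)=p_g(\widetilde F)+\delta$, where $\delta=\sum_x\delta_x$ is the total delta-invariant. Thus either $\delta=0$ and $F$ is smooth of genus $1$ (type $I_0$), or $\delta=1$ and $F$ is rational with a unique singular point of delta-invariant $1$, i.e.\ a node (type $I_1$) or a cusp (type $II$).

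For a reducible fibre ($r\ge 2$ components) the heart of the algebraic part is that every component is a $(-2)$-curve. By Zariski's lemma the intersection form on $\bigoplus_i\ZZ C_i$ is negative semidefinite with radical spanned by $F$; since $r\ge 2$, no $C_i$ is proportional to $F$, so $C_i^2\le -1$. If $C_i^2=-1$, adjunction gives $K_Y\cdot C_i=2p_a(C_i)-1$, and relative minimality (no $(-1)$-curve in a fibre) forces $p_a(C_i)\ge 1$, hence $K_Y\cdot C_i\ge 1$; if $C_i^2\le -2$ then $K_Y\cdot C_i=2p_a(C_i)-2-C_i^2\ge 0$. Either way $K_Y\cdot C_i\ge 0$, and since $\sum_i n_i\,(K_Y\cdot C_i)=K_Y\cdot F=0$ with $n_i>0$, this forces $K_Y\cdot C_i=0$ for every $i$; then $C_i^2=2p_a(C_i)-2\le -1$ gives $p_a(C_i)=0$ and $C_i^2=-2$. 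Now the Gram matrix $(-C_i\cdot C_j)$ is an indecomposable (because $F$ is connected) symmetric integer matrix with diagonal entries $2$ and nonpositive off-diagonal entries, positive semidefinite with one-dimensional radical; by the classification of such matrices the off-diagonal entries lie in $\{0,-1,-2\}$ and the dual graph of $F$ is an affine Dynkin diagram of simply-laced type, namely $\widetilde A$, $\widetilde D$, $\widetilde E_6$, $\widetilde E_7$ or $\widetilde E_8$, with the multiplicities $n_i$ equal to the corresponding marks. These match Kodaira's $I_n$/$III$/$IV$, $I_n^*$, $IV^*$, $III^*$, $II^*$.

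The remaining, and in my view hardest, step is to upgrade this combinatorial picture to Kodaira's precise list and to Figure~\ref{fig:singular_fibres}. This is genuinely not a combinatorial matter, because distinct Kodaira types share a dual graph: $\widetilde A_1$ underlies both $I_2$ (two components meeting transversally at two points) and $III$ (two components with contact of order $2$ at one point), and $\widetilde A_2$ underlies both $I_3$ and $IV$. One must therefore analyse the local analytic structure of $f$ over a disc around the critical value, and rule out the configurations not in the table. I would carry this out via Kodaira's functional and homological invariants: the meromorphic function $j$ attached to the family and the local monodromy on $H_1$ of the general fibre, a conjugacy class in $\SL_2(\ZZ)$; the finitely many quasi-unipotent conjugacy classes, together with the order of vanishing or pole of $j$, single out exactly the configurations listed, and an explicit local model realises each one. (Alternatively, one can put a neighbourhood of the fibre in Weierstrass form and run Tate's algorithm, reading the type off the orders of vanishing of the Weierstrass coefficients and the discriminant.) I expect essentially all of the difficulty to be concentrated in this local analysis, the global part above being routine intersection theory.
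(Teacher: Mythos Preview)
The paper does not give its own proof of this theorem: it is stated with a citation to Kodaira and used as background. So there is no ``paper's proof'' to compare against; any correct argument you give stands on its own as a proof of a classical result the authors simply quote.

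Your sketch follows the standard route and is essentially correct. A couple of small comments. In the step showing each component is a $(-2)$-curve, once you know $K_Y\cdot C_i=0$ for all $i$, the case $C_i^2=-1$ is already excluded (it would force $K_Y\cdot C_i\ge 1$), so $C_i^2\le -2$ and then $p_a(C_i)=0$, $C_i^2=-2$ follow; you have this, but the logic could be stated more cleanly. In the final step you correctly identify the genuine subtlety: the affine Dynkin graph does not separate $I_2$ from $III$ nor $I_3$ from $IV$, and one needs the local monodromy/$j$-invariant (or a Weierstrass/Tate computation) to finish. That is exactly how Kodaira proceeds, and your outline is an honest account of where the work lies.
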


\begin{figure}
	\centering
	\def\svgwidth{0.6\textwidth}
	\smallskip
	\input{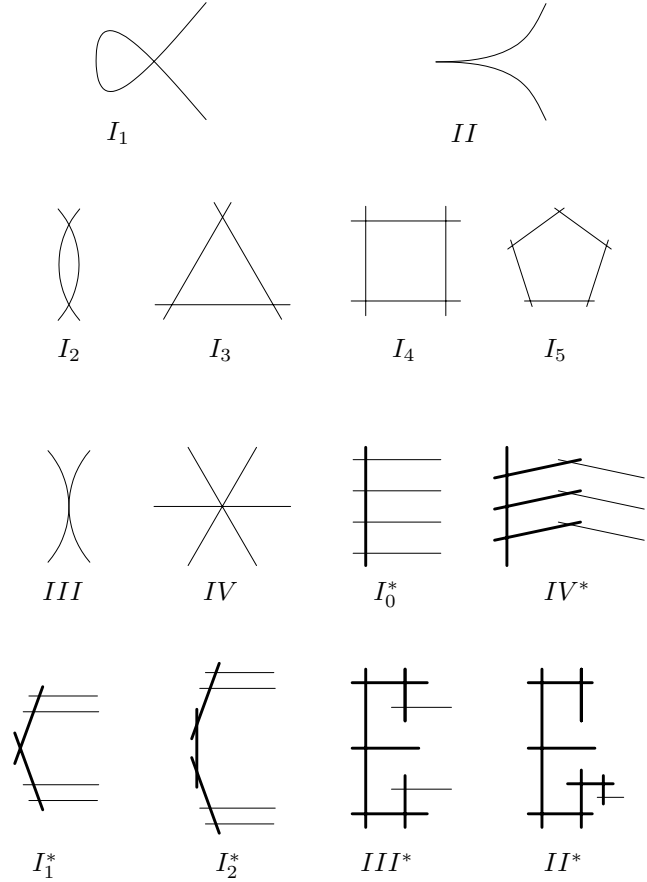}
	\caption{ 	\label{fig:singular_fibres}Some singular fibres of an elliptic fibration.
		The bold curves denote components with multiplicity $>1$.
		We have not depicted fibres of type $I_n$ for $n \geq 6$ nor of type $I^*_n$ for $n \geq 3$.}
\end{figure}

For a fibre $F$ of $f$, we define $r(F)$ to be the number of components of $F$ minus $1$, i.e.\ the number of components of $F$ which do not intersect a fixed section.
The values of $\chitop$ and of $r$ for all possible fibres are contained in Table~\ref{tab:singular_fibres}.

\begin{table}[ht!]
	\begin{equation*}
		\begin{array}{lcc}
			& \chitop & r \\
			\hline
			I_0 & 0 & 0 \\
			I_n \ (n \geq 1) & n & n-1 \\
			I_n^* \ (n \geq 0) & n+6 & n+4 \\
			II & 2 & 0 \\
			III & 3 & 1 \\
			IV & 4 & 2 \\
			IV^* & 8 & 6 \\
			III^* &9 & 7 \\
			II^* & 10 & 8 \\
		\end{array}
	\end{equation*}
	\caption{\label{tab:singular_fibres} The invariants $\chitop$ and $r$ for the possible fibres on an elliptic surface.}
\end{table}

\subsection{The Mordell--Weil lattice}

Fix a section $\sigma_0$ of $f$.
The set of sections of $f$ has the structure of a finitely generated  abelian group, in which $\sigma_0$ is the identity element. This group is denoted by $\MW(Y/C)$ or $\MW(Y)$ and is called the \emph{Mordell--Weil group} of $f$ or of $Y$. The section $\sigma_0$ is called the $0$-section.
The isomorphism class of the group $\MW(Y)$ does not depend on the choice of $\sigma_0$.

Let $R(Y)$ be the subgroup of $\NS(Y)$ generated by the components of the fibres not meeting $\sigma_0$. One has
\[
\rank R(Y) = \sum_{F \text{ singular fibre}} r(F).
\]
Let $T(Y)$ be the subgroup of $\NS(Y)$ generated by $\sigma_0$ and by the components of the fibres.
One has
\[
T(Y) = \ZZ \sigma_0 \oplus \ZZ F \oplus R(Y),
\]
where $F$ is the class of a fibre.

The following theorem states that, modulo  $T(Y)$, $\mathrm{NS}(Y)$ can be understood in terms of sections.
\begin{theorem} [{\cite[Theorem 6.5]{schuett_shioda_book}}]\label{thm:iso}
	There is an isomorphism of abelian groups
	\begin{equation*}
		\MW(Y) \simeq \NS(Y) / T(Y).
	\end{equation*}
\end{theorem}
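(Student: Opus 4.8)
The plan is to construct the isomorphism directly from the intersection pairing on $\NS(Y)$, exploiting the classical fact that a fibred surface with a section has a very rigid Néron--Severi lattice.

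First I would recall the structure of $\NS(Y)$: since $f$ has a section, the general fibre of $f$ over $C = \PP^1$ (in the situation of interest $C \cong \PP^1$, but even in general) forces $\NS(Y)$ to be generated by $\sigma_0$, the fibre class $F$, and all the irreducible components of the singular fibres, \emph{together with} additional sections; more precisely, by the theory of elliptic surfaces (Shioda--Tate), $\NS(Y)$ is spanned over $\QQ$ (in fact over $\ZZ$, once one knows $\NS(Y)$ is torsion-free, which holds here since $Y$ is rational or since $q(Y)=0$) by $T(Y)$ and the classes of sections. Each section $\sigma$ of $f$ determines a class $[\sigma(C)] \in \NS(Y)$, and $\sigma \mapsto [\sigma(C)] \bmod T(Y)$ gives a well-defined map $\MW(Y) \to \NS(Y)/T(Y)$. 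The key input is that this map is a group homomorphism: this is not obvious because $\MW(Y)$ is defined via the group law on the generic fibre, but it follows from the identification of $\MW(Y)$ with the group of $K$-points of the generic fibre $E/K$ ($K = \CC(C)$) and the fact that linear equivalence of divisors on $E$ (i.e. the group law via $\mathrm{Pic}^0(E)$) spreads out to algebraic equivalence on $Y$ modulo vertical divisors and the zero section.

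Next I would prove surjectivity: given any class $D \in \NS(Y)$, one shows that after subtracting a suitable integer combination of $F$, $\sigma_0$, and fibre components, $D$ becomes (the class of) an effective divisor that is a sum of a single section and vertical stuff — this is the Shioda--Tate argument, using that $D \cdot F$ can be normalized to $1$ by adding multiples of $F$, and then Riemann--Roch on the generic fibre $E_K$ produces a $K$-rational point, i.e. a section. For injectivity, suppose a section $\sigma$ satisfies $[\sigma(C)] \in T(Y)$; intersecting with $F$ and with the fibre components pins down the $T(Y)$-coefficients, and one deduces $[\sigma(C)] = [\sigma_0(C)] + (\text{vertical})$, whence $\sigma$ and $\sigma_0$ have linearly equivalent restrictions to the generic fibre, so they define the same point of $E(K)$, i.e. $\sigma = \sigma_0$ in $\MW(Y)$.

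The main obstacle is the compatibility of the two group structures — showing $\sigma \mapsto [\sigma(C)] \bmod T(Y)$ is additive for the Mordell--Weil group law. I would handle this by passing to the generic fibre: the restriction map $\NS(Y) \to \Pic(E_K)$ has kernel containing $T(Y)$ (the components of fibres restrict to $0$ or to the point, and $F$ restricts to $0$), and it induces $\NS(Y)/T(Y) \hookrightarrow \Pic(E_K)$; composing with the degree-and-base-point normalization identifies the image with $\Pic^0(E_K) = E(K) = \MW(Y)$, under which $[\sigma(C)] \bmod T(Y)$ corresponds exactly to the point $\sigma$ defines. Since the group law on $E(K)$ is by definition that of $\Pic^0$, additivity is automatic once this dictionary is set up; the remaining bookkeeping (torsion-freeness of $\NS(Y)$, that the relevant sequence of lattices is exact over $\ZZ$ and not merely $\QQ$) is routine and can be cited from \cite{schuett_shioda_book, miranda_persson_extremal}.
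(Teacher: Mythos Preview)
The paper does not give its own proof of this statement; it is quoted verbatim from \cite[Theorem~6.5]{schuett_shioda_book} as background. Your sketch is essentially the standard Shioda--Tate argument found there, so there is nothing to compare against on the paper's side.

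There is, however, one genuine slip in your write-up. You assert that the restriction map $\NS(Y)\to\Pic(E_K)$ has kernel \emph{containing} $T(Y)$, and that it therefore induces an injection $\NS(Y)/T(Y)\hookrightarrow\Pic(E_K)$. This is false: the zero section $\sigma_0\in T(Y)$ restricts to a degree-$1$ divisor on $E_K$, not to zero, so $T(Y)$ is not contained in the kernel. The kernel of the restriction map is exactly the vertical subgroup $V=\ZZ F\oplus R(Y)$ (classes of divisors supported on fibres), and $T(Y)=\ZZ\sigma_0\oplus V$. The correct chain is
\[
\NS(Y)/T(Y)\;\simeq\;\bigl(\NS(Y)/V\bigr)\big/\,\ZZ[\sigma_0]\;\simeq\;\Pic(E_K)\big/\,\ZZ[\sigma_0|_{E_K}]\;\simeq\;\Pic^0(E_K)=E(K)=\MW(Y),
\]
where the middle isomorphism uses surjectivity of $\NS(Y)\to\Pic(E_K)$ with kernel $V$, and the last uses that $[\sigma_0|_{E_K}]$ has degree~$1$. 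You in fact gesture at exactly this (``composing with the degree-and-base-point normalization''), so the idea is there; but as written the sentence about the kernel is wrong and the claimed injection $\NS(Y)/T(Y)\hookrightarrow\Pic(E_K)$ does not exist. Once rewritten as above, the map is visibly a group isomorphism because every step is a quotient of abelian groups, and your separate surjectivity/injectivity/additivity checks become redundant (though not incorrect).
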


Moreover, there exists an injective group homomorphism (see \cite[Lemma 6.17]{schuett_shioda_book})
	\begin{equation*}
		\varphi \colon \MW(Y)/{\MW(Y)_{\mathrm{tors}}} \to \NS(Y) \otimes_\ZZ \QQ,
	\end{equation*}
 which can be used to endow the Mordell-Weil group $\MW(Y)$ with the structure of a lattice. The homomorphism $\varphi$ is called the \emph{Shioda homomorphism}.
 
\begin{theorem}[{\cite[Theorem 6.20]{schuett_shioda_book}}]\label{th:HeightP}
Let $\varphi$ be the Shioda homomorphism and let $\cdot$ be the intersection product on $Y$.
	 For all sections  $\sigma_1, \sigma_2$, set:
	\begin{equation*}
		\langle \sigma_1, \sigma_2 \rangle := - \varphi (\sigma_1)\cdot \varphi (\sigma_2)
	\end{equation*}
	Then $\langle \cdot, \cdot \rangle$ is a symmetric  bilinear $\mathbb Q$-valued pairing on $\MW(Y)$ and induces the structure of a positive definite lattice on $\MW(Y)/{\MW(Y)_{\mathrm{tors}}}$.
\end{theorem}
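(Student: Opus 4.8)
The plan is to exhibit $\varphi$ as (essentially) the orthogonal projection onto the orthogonal complement of $T(Y)$ inside the quadratic space $\NS(Y)\otimes\QQ$, and then read off every assertion from the Hodge index theorem. Concretely, using Theorem~\ref{thm:iso} I identify $\MW(Y)$ with $\NS(Y)/T(Y)$; for a section $\sigma$ the Shioda class $\varphi(\sigma)$ is the image of $(\sigma)-(\sigma_0)$ under this identification followed by the orthogonal projection
\[
\pi\colon \NS(Y)\otimes\QQ\ \longrightarrow\ T(Y)^{\perp},\qquad T(Y)^{\perp}:=\{x\in\NS(Y)\otimes\QQ : x\cdot t=0\ \forall\, t\in T(Y)\},
\]
i.e.\ $\varphi(\sigma)$ equals $(\sigma)-(\sigma_0)$ minus the unique $\QQ$-linear combination of $\sigma_0$, $F$ and the components of $R(Y)$ that makes the difference orthogonal to $T(Y)$. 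For this to be well defined I need $T(Y)\otimes\QQ$ to be a nondegenerate subspace, so that $\NS(Y)\otimes\QQ=(T(Y)\otimes\QQ)\oplus T(Y)^{\perp}$ and $\pi$ is represented by a rational matrix (its computation only involves inverting the Gram matrix of $T(Y)$). Granting that, $\varphi$ takes values in $\NS(Y)\otimes\QQ$ and is additive, and $\langle\sigma_1,\sigma_2\rangle=-\varphi(\sigma_1)\cdot\varphi(\sigma_2)$ is automatically symmetric, $\ZZ$-bilinear and $\QQ$-valued, since the intersection product on $Y$ is symmetric, bilinear and integer-valued and $\pi$ is rational.

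Next I would prove that $T(Y)\otimes\QQ$ is nondegenerate of signature $(1,\rank T(Y)-1)$, using the decomposition $T(Y)=\ZZ\sigma_0\oplus\ZZ F\oplus R(Y)$. The plane $\ZZ\sigma_0\oplus\ZZ F$ has Gram matrix $\left(\begin{smallmatrix}-\chi&1\\ 1&0\end{smallmatrix}\right)$ with $\chi=\chi(\cO_Y)$ (using $(\sigma_0)^2=-\chi$, $(\sigma_0)\cdot F=1$, $F^2=0$), hence determinant $-1$ and signature $(1,1)$, and it is orthogonal to $R(Y)$ because the fibral curves generating $R(Y)$ meet neither $\sigma_0$ nor a general fibre. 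The lattice $R(Y)$ is negative definite: it is the orthogonal direct sum, over the singular fibres, of the lattices spanned by the fibre components not meeting $\sigma_0$, and by Zariski's lemma the intersection form on the components of a connected fibre is negative semidefinite with radical the line through the fibre class, so deleting the (multiplicity-one) component that meets $\sigma_0$ leaves a negative definite form — this matches the $ADE$ root lattices attached to the Kodaira types of Figure~\ref{fig:singular_fibres}. Hence $T(Y)\otimes\QQ$ has signature $(1,\rank T(Y)-1)$, in particular is nondegenerate, which validates the previous paragraph. Moreover $\ker\varphi$ is exactly the set of classes in $\NS(Y)/T(Y)$ that vanish after $\otimes\QQ$, i.e.\ the torsion subgroup of $\NS(Y)/T(Y)$, which under Theorem~\ref{thm:iso} is $\MW(Y)_{\tors}$; so $\varphi$ factors through an injection $\MW(Y)/\MW(Y)_{\tors}\hookrightarrow T(Y)^{\perp}$.

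Finally, positive definiteness of $\langle\cdot,\cdot\rangle$ on $\MW(Y)/\MW(Y)_{\tors}$ amounts to the claim that the intersection form on $T(Y)^{\perp}$ is negative definite. Here I invoke the Hodge index theorem: the intersection form on $\NS(Y)\otimes\QQ$ is nondegenerate of signature $(1,\rho-1)$, where $\rho=\rank\NS(Y)$. Since $T(Y)\otimes\QQ$ contributes signature $(1,\rank T(Y)-1)$ and splits off orthogonally, its complement $T(Y)^{\perp}$ has signature $(0,\rho-\rank T(Y))$, i.e.\ is negative definite. Therefore $-\varphi(\sigma)\cdot\varphi(\sigma)>0$ for every non-torsion section $\sigma$, and $\langle\cdot,\cdot\rangle$ induces a positive definite lattice on $\MW(Y)/\MW(Y)_{\tors}$. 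The one genuinely nontrivial ingredient is the structural input that $R(Y)$, and hence $T(Y)$, is nondegenerate of the stated signature; this rests on Zariski's lemma together with Kodaira's classification of singular fibres, after which the rest is linear algebra powered by the Hodge index theorem.
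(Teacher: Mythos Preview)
The paper does not give a proof of this theorem: it is quoted verbatim as \cite[Theorem 6.20]{schuett_shioda_book} and used as a black box. So there is no ``paper's own proof'' to compare against.

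That said, your argument is correct and is essentially the standard proof one finds in Shioda's original paper and in \cite{schuett_shioda_book}. The logical skeleton --- realise $\varphi$ as orthogonal projection onto $T(Y)^\perp$ in $\NS(Y)\otimes\QQ$, check that $T(Y)$ is nondegenerate of signature $(1,\rank T(Y)-1)$ using the hyperbolic plane $\ZZ\sigma_0\oplus\ZZ F$ and the negative definiteness of $R(Y)$ via Zariski's lemma, then invoke the Hodge index theorem to conclude $T(Y)^\perp$ is negative definite --- is exactly the route taken in the cited reference. The identification $\ker\varphi=\MW(Y)_{\tors}$ and the resulting positive definite lattice structure on the quotient follow as you describe. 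There is nothing missing.
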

In particular  a section $\sigma$ is torsion if and only if $\langle \sigma, \sigma \rangle =0$.

\begin{definition}[{\cite[Definition 6.21]{schuett_shioda_book}}]
	$(\MW(Y)/{{\MW(Y)}_{\mathrm{tors}}}, \langle \cdot, \cdot \rangle)$ is called the \emph{Mordell--Weil lattice} and  $\langle \cdot, \cdot \rangle$ is called the \emph{height pairing}.
\end{definition}
The following result gives an explicit formula for the height pairing. 
Given a singular fibre $F$ and two sections $\sigma_1$ and $\sigma_2$, there is a notion of \emph{local contribution}  $\mathrm{contr}_F(\sigma_1, \sigma_2)$,  which registers the way $F$ is met by $\sigma_1$ and $\sigma_2$ \cite[Definition 6.23]{schuett_shioda_book}. The  explicit description of  $\contr_{F}(\sigma_1,\sigma_2)$ will be used in this paper only for the singular fibers of type $I_n$, see Example \ref{ex:contr}.

Given a section $\sigma$ we define $\contr_F(\sigma)=\contr_F(\sigma, \sigma)$.

\begin{theorem}[{\cite[Theorem 6.24]{schuett_shioda_book}}]\label{th:HeightPExplicit} Let $\sigma_0$ be the zero section. Then for all sections $\sigma_1$ and $\sigma_2$
	
	\begin{equation*}
		\langle \sigma_1, \sigma_2 \rangle =\chi(\mathcal O_Y)  + \sigma_1 \cdot \sigma_0 + \sigma_2 \cdot \sigma_0 - \sigma_1 \cdot \sigma_2 -  \sum _{F \ \mathrm{singular \ fibre}} \contr_{F}(\sigma_1,\sigma_2),
	\end{equation*}
\end{theorem}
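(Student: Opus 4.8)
The plan is to unwind the definition $\langle\sigma_1,\sigma_2\rangle = -\varphi(\sigma_1)\cdot\varphi(\sigma_2)$ using the explicit form of the Shioda homomorphism and then reduce everything to elementary intersection theory on $Y$. Write $(\sigma)\in\NS(Y)$ for the class of a section $\sigma$. Recall that $\varphi(\sigma)$ is the orthogonal projection of $(\sigma)$ onto the subspace $(T(Y)\otimes_\ZZ\QQ)^\perp$ of $\NS(Y)\otimes_\ZZ\QQ$; this makes sense because $T(Y)\otimes\QQ$ is non-degenerate. Indeed $\ZZ\sigma_0\oplus\ZZ F$ carries the form $\left(\begin{smallmatrix}-\chi(\cO_Y)&1\\1&0\end{smallmatrix}\right)$ --- here $F^2=0$, $\sigma_0\cdot F=1$, and $\sigma_0^2=-\chi(\cO_Y)$ by adjunction together with the canonical bundle formula for a relatively minimal elliptic surface with a section --- while $R(Y)$ is negative definite, being an orthogonal direct sum, one summand per reducible fibre, of the negatives of the root lattices of type $A$, $D$, $E$ corresponding to the Kodaira types in Figure~\ref{fig:singular_fibres}.

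First I would make $\varphi$ explicit. Write the $T(Y)$-component of $(\sigma)$ as $a\,(\sigma_0)+b\,F+\sum_v\sum_{i\geq1}c_{v,i}\,\Theta_{v,i}$, where the $\Theta_{v,i}$ with $i\geq1$ are the components of the reducible fibre $F_v$ disjoint from $\sigma_0$, and solve for the coefficients by imposing $\varphi(\sigma)\perp F$, $\varphi(\sigma)\perp(\sigma_0)$ and $\varphi(\sigma)\perp\Theta_{v,i}$. Using $(\sigma)\cdot F=1$, $F\cdot\Theta_{v,i}=0=\sigma_0\cdot\Theta_{v,i}$ and the values above, the first two conditions give $a=1$ and $b=(\sigma)\cdot\sigma_0+\chi(\cO_Y)$, and the last becomes, after the mixed terms drop out, the linear system $A_v\,c_v=\beta_v(\sigma)$, where $A_v=(\Theta_{v,i}\cdot\Theta_{v,j})_{i,j\geq1}$ is minus the Cartan matrix of the $\mathrm{ADE}$ type attached to $F_v$ (each $\Theta_{v,i}$ is a $(-2)$-curve and the intersections are read off from Figure~\ref{fig:singular_fibres}) and $\beta_v(\sigma)=\bigl((\sigma)\cdot\Theta_{v,i}\bigr)_{i\geq1}$. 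Hence
\[
\varphi(\sigma) = (\sigma) - (\sigma_0) - \bigl((\sigma)\cdot\sigma_0+\chi(\cO_Y)\bigr)F - \sum_v\sum_{i\geq1}\bigl(A_v^{-1}\beta_v(\sigma)\bigr)_i\,\Theta_{v,i}.
\]

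Then the computation of $\langle\sigma_1,\sigma_2\rangle=-\varphi(\sigma_1)\cdot\varphi(\sigma_2)$ is short. Since $\varphi(\sigma_1)\perp T(Y)\otimes\QQ$ and $\varphi(\sigma_2)-(\sigma_2)\in T(Y)\otimes\QQ$, we get $\varphi(\sigma_1)\cdot\varphi(\sigma_2)=\varphi(\sigma_1)\cdot(\sigma_2)$; plugging in the formula above and using $F\cdot(\sigma_2)=1$ and $\Theta_{v,i}\cdot(\sigma_2)=\bigl(\beta_v(\sigma_2)\bigr)_i$ yields
\[
\varphi(\sigma_1)\cdot\varphi(\sigma_2) = \sigma_1\cdot\sigma_2 - \sigma_0\cdot\sigma_2 - \sigma_1\cdot\sigma_0 - \chi(\cO_Y) - \sum_v \beta_v(\sigma_1)^{\mathsf T}A_v^{-1}\beta_v(\sigma_2),
\]
and negating gives the asserted identity once $\contr_{F_v}(\sigma_1,\sigma_2)$ is identified with $-\beta_v(\sigma_1)^{\mathsf T}A_v^{-1}\beta_v(\sigma_2)$ (the sum over all singular fibres rather than the reducible ones being harmless, as $\contr_{F}=0$ for fibres of type $I_1$ and $II$). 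This identification is the content of the definition of the local contribution, Definition 6.23 of \cite{schuett_shioda_book}, expressed through the inverse of the positive definite Cartan matrix $-A_v$. The one genuinely computational point --- and essentially the only place where anything could go wrong --- is bookkeeping: recording the intersection matrices $A_v$ for each Kodaira fibre and checking they are the negative Cartan matrices, together with confirming the auxiliary facts $\sigma_0^2=-\chi(\cO_Y)$ and $\sigma\cdot F=1$ for every section $\sigma$, on which the very existence of the orthogonal projection $\varphi$ rests. For the purposes of this paper only the type $I_n$ case of these tables is needed (Example~\ref{ex:contr}).
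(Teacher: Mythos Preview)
The paper does not give its own proof of this theorem: it is quoted verbatim from \cite[Theorem~6.24]{schuett_shioda_book} and used as a black box, so there is nothing in the paper to compare against. Your argument is correct and is in fact the standard one (essentially Shioda's original computation, reproduced in the cited reference): realise $\varphi$ as orthogonal projection off $T(Y)\otimes\QQ$, solve for the $T(Y)$-component of a section explicitly, and then expand $-\varphi(\sigma_1)\cdot\varphi(\sigma_2)=-\varphi(\sigma_1)\cdot(\sigma_2)$; the local contribution is then, by definition, the quadratic form attached to the inverse of the (negative) Cartan matrix of each reducible fibre.
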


In this work we will use only the particular case when the fibre $F$ is of type $I_n$, which we present here:

\begin{example}[{\cite[Table 6.1]{schuett_shioda_book}}]\label{ex:contr}
	Let $F$  be of type $I_n$ and $\sigma_1$ and $\sigma_2$  intersect the $i$th  and $j$th component  respectively, for $1 \leq i \leq j \leq n-1$. Assume that we label the components cyclically from $0$ to $n-1$ and the $0$-section is labelled with $0$, then:
	\begin{equation*}
		\contr_F(\sigma_1,\sigma_2)= \frac{i(n-j)}{n}.
	\end{equation*}
\end{example}

In \cite{miranda_torsionAquila} Miranda proves the following:

\begin{prop}[{\cite[Proposition 3.1, Corollary 4.1, Corollary 4.3]{miranda_torsionAquila}}]\label{MirandaAquila}
	
	Let $Y \to \PP^1$ be an elliptic surface with semistable fibres only, that is with singular fibres of type $I_n$ only. Let $\{ I_{m_v} \}_{v \in \mathbb P^1}$ be the collection of such singular fibres. Denote the components of the $I_{m_v}$ fibre as $\{m_0 (v), m_1 (v) , \cdots, m_{m_v-1}(v) \} $. Assume that a  torsion section $\sigma \neq \sigma_0$ intersects the  fibre component $m_j (v)$.  Without loss of generality we assume $m_j (v) \leq m(v)/2.$ Then
	\begin{enumerate}
		\item $\sum_v  m_j(v) \cdot \frac{1-m_j(v)}{m_v}= 2 \chi (\mathcal O_Y)$.
		\item Let $\sigma$ be a torsion section of order $n$. Then 
		\begin{equation*}
			\sum_v  m_j(v) =
			\begin{cases}
				4 \chi (\mathcal O_Y) & \text{if  $n=2$}\\
				3 \chi (\mathcal O_Y) & \text{if  $n \geq 3$}.
			\end{cases}       
		\end{equation*} 
	\end{enumerate}
\end{prop}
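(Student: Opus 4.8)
The plan is to deduce both identities from the explicit height formula (Theorem~\ref{th:HeightPExplicit}) applied to the torsion section $\sigma$ (and to $-\sigma$), the crucial geometric input being that torsion sections avoid the zero section. Write $\chi=\chi(\cO_Y)$; for each $v$ carrying a singular fibre $I_{m_v}$ let $j_v$ be the index of the component of $I_{m_v}$ met by $\sigma$ (the $m_j(v)$ of the statement), with the components cyclically labelled so that $\sigma_0$ lies on component $0$ and $j_v\le m_v/2$. By Example~\ref{ex:contr}, $\contr_{I_{m_v}}(\sigma)=\frac{j_v(m_v-j_v)}{m_v}=j_v\bigl(1-\frac{j_v}{m_v}\bigr)$, a quantity insensitive to the normalisation of $j_v$. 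Two preliminary facts are needed: (a) $\sigma^2=-\chi$, which follows from adjunction on the smooth rational curve $\sigma$ together with the canonical bundle formula $K_Y\sim(\chi-2)F$ for the relatively minimal elliptic surface $Y\to\PP^1$; and (b) any nonzero torsion section is disjoint from $\sigma_0$, hence — translating by a torsion section — any two distinct torsion sections are disjoint.

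Fact (b) is where I expect the real work to be, and it is the only place the semistability hypothesis is genuinely used. First, a torsion section meets every fibre in its smooth locus: otherwise it would pass through a node of some $I_{m_v}$, meeting two components and contradicting $\sigma\cdot F=1$. Over a smooth fibre, distinct torsion sections cannot collide because the $n$-torsion subscheme of the fibration is finite étale in characteristic $0$. Over a place $v$ with an $I_{m_v}$ fibre I would use the Tate parametrisation $\bar K^{\times}/q^{\ZZ}$ with $v(q)=m_v$: a torsion section is represented by some $x\in K^{\times}$ with $x^{n}\in q^{\ZZ}$, and because it reduces to a \emph{smooth} point of the fibre one must have $v(x)=0$, forcing $x^{n}=1$; thus $x$ is a root of unity, nontrivial since $\sigma\ne\sigma_0$, so its reduction is a nontrivial point of $\Gm$ and $\sigma(v)\ne\sigma_0(v)$. (In the additive case torsion points can reduce to the identity and the statements fail; this is precisely the point of restricting to semistable fibres.)

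Given (a) and (b), statement (1) is immediate. Since $\sigma$ is torsion its class lies in the kernel of the Shioda homomorphism, so $\langle\sigma,\tau\rangle=0$ for every section $\tau$; in particular $\langle\sigma,\sigma\rangle=0$. Putting $\sigma_1=\sigma_2=\sigma$ in Theorem~\ref{th:HeightPExplicit} and using $\sigma\cdot\sigma_0=0$, $\sigma^2=-\chi$, and the value of $\contr_{I_{m_v}}(\sigma)$ above gives $0=\chi+0+0-(-\chi)-\sum_v j_v\bigl(1-\frac{j_v}{m_v}\bigr)$, i.e. $\sum_v j_v\bigl(1-\frac{j_v}{m_v}\bigr)=2\chi$, which is (1).

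For (2), split on $n$. If $n=2$ then $2j_v\equiv 0\pmod{m_v}$, so $j_v\in\{0,m_v/2\}$, whence $\contr_{I_{m_v}}(\sigma)=\frac12 j_v$ in every case and (1) becomes $\sum_v j_v=4\chi$. If $n\ge 3$ then $-\sigma$ and $2\sigma$ are nonzero torsion sections and $-\sigma\ne\sigma$; over $v$ the section $-\sigma$ meets the component of index $m_v-j_v$, so, using $j_v\le m_v/2$ to order the two components, Example~\ref{ex:contr} gives $\contr_{I_{m_v}}(\sigma,-\sigma)=\frac{j_v^2}{m_v}$. Applying Theorem~\ref{th:HeightPExplicit} to the pair $(\sigma,-\sigma)$ with $\langle\sigma,-\sigma\rangle=0$, $\sigma\cdot\sigma_0=(-\sigma)\cdot\sigma_0=0$, and $\sigma\cdot(-\sigma)=(2\sigma)\cdot\sigma_0=0$ (translation by $\sigma$ carries $(\sigma,-\sigma)$ to $(2\sigma,\sigma_0)$, and $2\sigma\ne\sigma_0$ since $n\ge 3$) yields $\sum_v\frac{j_v^2}{m_v}=\chi$; subtracting this from (1), rewritten as $\sum_v j_v-\sum_v\frac{j_v^2}{m_v}=2\chi$, gives $\sum_v j_v=3\chi$. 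Beyond fact (b), the only bookkeeping subtlety is keeping the cyclic normalisation $j_v\le m_v/2$ straight when invoking Example~\ref{ex:contr}.
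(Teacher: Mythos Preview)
The paper does not prove this proposition; it is simply quoted from Miranda's paper. Your argument is correct in its main lines and is essentially how Miranda argues: part (1) is the vanishing of $\langle\sigma,\sigma\rangle$ combined with $\sigma\cdot\sigma_0=0$ and $\sigma^2=-\chi$, and part (2) for $n\ge 3$ is the extra relation coming from the pair $(\sigma,-\sigma)$.

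One correction concerns your fact (b). Your claim that semistability is ``the only place'' needed, and needed precisely for (b), is not right: disjointness of distinct torsion sections holds on \emph{every} elliptic surface in characteristic $0$ --- the paper itself quotes this immediately after the proposition from \cite[Proposition~6.33]{schuett_shioda_book}. The standard reason is that the kernel of reduction is a formal group, hence torsion-free in characteristic $0$, so a nonzero torsion point cannot specialise to the identity even over an additive fibre. In your Tate-parametrisation sketch the sentence ``because it reduces to a smooth point of the fibre one must have $v(x)=0$'' is off: sections always reduce to smooth points, and $v(x)\bmod m_v$ records which component is hit. The argument you want is a case split: if $\sigma$ lands on a nonzero component it is disjoint from $\sigma_0$ trivially; if it lands on component $0$ one may choose a representative with $v(x)=0$, whence $x^n\in q^{\ZZ}$ forces $x^n=1$, and a nontrivial root of unity reduces to a nontrivial root of unity. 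The semistability hypothesis in the proposition is there only because the identities are phrased for $I_n$ fibres and use the $I_n$ contribution formula; it is not what secures (b).
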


In characteristic $0$, \cite[Proposition 6.33]{schuett_shioda_book}, if  $\sigma$ is a  torsion section, then  $\sigma \cdot \sigma_0=0$.

\subsection{Rational elliptic surfaces}
Suppose now that the surface $Y$ is rational.  This implies that $C=\PP^1$ \cite[$\S 7.1$]{schuett_shioda_book}. A rational elliptic surface has a very restricted topology:

\begin{prop}[{\cite[$\S 7.2$]{schuett_shioda_book}}] We have:
	\begin{enumerate}
		
		\item $\omega_Y \simeq f^* \cO_{\PP^1}(-1)$ and every fibre of $f$ is an anticanonical divisor;
		
		\item $K_Y^2 = 0$;
		
		\item $q(Y) = 0$, $p_g(Y) = 0$, $\chi(\cO_Y) = 1$;
		
		\item $\chitop (Y)= 12$, $h^{1,1}(Y) = 10$;
		
		\item		$C$ is a section of $f$ if and only if $C$ is a $(-1)$-curve in $Y$;
		
		\item $\Pic(Y) \simeq \NS(Y) \simeq \rH^2(Y, \ZZ) \simeq \ZZ^{10}$.
	\end{enumerate}
\end{prop}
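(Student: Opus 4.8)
The plan is to derive all six statements from Kodaira's canonical bundle formula together with the rationality of $Y$ and standard surface theory (Noether's formula, adjunction, the exponential sequence); since $Y$ is rational we already know $C = \PP^1$.

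First I would record the consequences of rationality. Because $h^1(Y,\cO_Y)$ and $h^0(Y,\omega_Y)=h^2(Y,\cO_Y)$ are birational invariants vanishing for $\PP^2$, we get $q(Y)=p_g(Y)=0$ and hence $\chi(\cO_Y)=1-q(Y)+p_g(Y)=1$, which is (3). Moreover $f$ has no multiple fibres: a section would meet a hypothetical fibre $mF_0$ with multiplicity $m\cdot(\sigma_0\cdot F_0)\geq m$, but also with multiplicity $1$, forcing $m=1$. Hence Kodaira's canonical bundle formula for a relatively minimal Jacobian elliptic surface applies and reads $\omega_Y\cong f^*\bigl(\omega_{\PP^1}\otimes(R^1f_*\cO_Y)^{-1}\bigr)$. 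To compute degrees on the base, note that $f_*\cO_Y=\cO_{\PP^1}$ (connected fibres) and that $R^1f_*\cO_Y$ is a line bundle on $\PP^1$ (every Kodaira fibre is a connected reduced curve of arithmetic genus $1$, so $h^1(\cO)$ is constant $=1$ along $f$); the Leray spectral sequence then gives $\chi(\cO_Y)=\chi(\cO_{\PP^1})-\chi(R^1f_*\cO_Y)=1-(\deg R^1f_*\cO_Y+1)$, whence $\deg R^1f_*\cO_Y=-\chi(\cO_Y)=-1$. Therefore $\omega_Y\cong f^*\cO_{\PP^1}(-2-(-1))=f^*\cO_{\PP^1}(-1)$; since $\cO_{\PP^1}(1)\cong\cO_{\PP^1}(p)$ and $f^*p$ is the fibre over $p$, the class $-K_Y$ is linearly equivalent to any fibre, which proves (1).

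From here the remaining items follow formally. Two distinct fibres are linearly equivalent and disjoint, so $F^2=0$ and hence $K_Y^2=F^2=0$, giving (2); Noether's formula $12\,\chi(\cO_Y)=K_Y^2+\chitop(Y)$ then yields $\chitop(Y)=12$, and since $b_0=b_4=1$ and $b_1=b_3=2q(Y)=0$ we get $b_2(Y)=10$, while $h^{2,0}=h^{0,2}=p_g(Y)=0$ forces $h^{1,1}(Y)=b_2(Y)=10$, which is (4). For (5): if $E$ is the image of a section then $E\cong\PP^1$ with $E\cdot F=1$, so adjunction gives $E^2=2p_a(E)-2-K_Y\cdot E=-2+F\cdot E=-1$; conversely, if $E$ is a $(-1)$-curve then $E\cong\PP^1$ and adjunction gives $F\cdot E=-K_Y\cdot E=-(2p_a(E)-2-E^2)=1$, while $f|_E$ is non-constant (a $(-1)$-curve cannot lie in a fibre by relative minimality), hence a finite morphism $E\to\PP^1$ of degree $E\cdot F=1$ between smooth curves, i.e. an isomorphism, whose inverse is a section with image $E$. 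Finally, for (6): $q(Y)=0$ gives $\Pic^0(Y)=0$ so $\Pic(Y)=\NS(Y)$; the exponential sequence together with $\rH^1(\cO_Y)=\rH^2(\cO_Y)=0$ gives $\Pic(Y)\cong\rH^2(Y,\ZZ)$; and since $Y$, being rational, is simply connected, $\rH^2(Y,\ZZ)$ is torsion-free, hence $\cong\ZZ^{b_2(Y)}=\ZZ^{10}$.

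The only genuinely non-formal ingredient is the canonical bundle formula and the evaluation $\deg R^1f_*\cO_Y=-\chi(\cO_Y)$; this is the step I would be most careful with, in particular checking local freeness of $R^1f_*\cO_Y$ and the absence of multiple fibres. An alternative, more hands-on route avoiding the canonical bundle formula is to run the minimal model program on $Y$, contracting $(-1)$-curves in fibres until one reaches a $\PP^1$-bundle over $\PP^1$ or $\PP^2$, and to track $K^2$, $\chitop$ and $\omega$ through the blow-downs; this is elementary but longer.
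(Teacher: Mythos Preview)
The paper does not prove this proposition; it is quoted from \cite[\S 7.2]{schuett_shioda_book} and stated without argument. Your proof is therefore not being compared against anything in the paper, and it is essentially correct and well organised: rationality gives (3), the canonical bundle formula with the Leray computation of $\deg R^1f_*\cO_Y$ gives (1), and everything else follows formally via Noether, adjunction, and the exponential sequence.

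One small slip: you justify local freeness of $R^1f_*\cO_Y$ by saying ``every Kodaira fibre is a connected reduced curve of arithmetic genus $1$''. Several Kodaira fibres ($I_n^*$, $II^*$, $III^*$, $IV^*$) are \emph{not} reduced. The correct argument is that flatness of $f$ forces $\chi(\cO_F)=0$ for every fibre, and connectedness of fibres (plus $f_*\cO_Y=\cO_{\PP^1}$) gives $h^0(\cO_F)=1$, hence $h^1(\cO_F)=1$ for all fibres; now cohomology and base change (or Grauert) yields local freeness of rank $1$. This is presumably what you meant, and the rest of your Leray/degree computation is fine. Your caveat at the end correctly identifies this as the only delicate point.
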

It follows from Theorem \ref{thm:iso} that 

	\[
	10 = \rank \NS(Y) = \rank \MW(Y)+ \rank T (Y)
	\] and \[
	8 = \rank \MW(Y)+ \rank R (Y).
	\]
 \smallskip

Moreover, the pairing $\langle \sigma, \sigma\rangle$ simplifies. Indeed, Theorem \ref{th:HeightPExplicit}, the adjunction formula, and $\chi(\mathcal O_Y)= 1 = -\sigma^2$ give:

\begin{cor}\label{cor:HeighPP}
	For every section $\sigma$
	\begin{equation*}
		\langle \sigma, \sigma \rangle = 2 +2 \sigma \cdot \sigma_0  -  \sum _{F \ \mathrm{singular \ fibre}} \contr_{F}(\sigma,\sigma).
	\end{equation*}
\end{cor}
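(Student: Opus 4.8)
The statement to prove is Corollary~\ref{cor:HeighPP}, which specializes the general height pairing formula of Theorem~\ref{th:HeightPExplicit} to the case of a rational elliptic surface, taking $\sigma_1 = \sigma_2 = \sigma$.

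The plan is to start from Theorem~\ref{th:HeightPExplicit} with $\sigma_1 = \sigma_2 = \sigma$, which reads
\[
\langle \sigma, \sigma \rangle = \chi(\cO_Y) + 2\, \sigma \cdot \sigma_0 - \sigma \cdot \sigma - \sum_{F \text{ singular fibre}} \contr_F(\sigma,\sigma),
\]
and then substitute the two special values available on a rational elliptic surface. The first substitution is $\chi(\cO_Y) = 1$, which is item (3) of the Proposition of \S\ref{sec:r.e.s.} on the topology of rational elliptic surfaces. The second is $\sigma \cdot \sigma = \sigma^2 = -1$: by item (5) of that same Proposition, any section is a $(-1)$-curve in $Y$, so its self-intersection is $-1$; equivalently, one can invoke the adjunction formula, since $\sigma$ is a smooth rational curve ($\sigma(C) \cong \PP^1$) and $\omega_Y = f^*\cO_{\PP^1}(-1)$ restricts trivially along a fibre-transverse section, giving $-2 = \sigma^2 + K_Y \cdot \sigma = \sigma^2 + 1$, hence $\sigma^2 = -1$. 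Plugging $\chi(\cO_Y) = 1$ and $-\sigma\cdot\sigma = +1$ into the displayed formula yields $\langle \sigma,\sigma\rangle = 1 + 2\,\sigma\cdot\sigma_0 + 1 - \sum_F \contr_F(\sigma,\sigma) = 2 + 2\,\sigma\cdot\sigma_0 - \sum_F \contr_F(\sigma,\sigma)$, which is exactly the claim.

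There is essentially no obstacle here: the only point requiring a word of justification is the identity $\sigma^2 = -1$, and this is immediate from the cited properties of rational elliptic surfaces (or from adjunction together with $\omega_Y \simeq f^*\cO_{\PP^1}(-1)$). The rest is direct substitution into Theorem~\ref{th:HeightPExplicit}. One should just be slightly careful that the term $-\sigma_1\cdot\sigma_2$ in the general formula, with $\sigma_1=\sigma_2=\sigma$, becomes $-\sigma^2 = -(-1) = +1$, which combines with $\chi(\cO_Y)=1$ to produce the constant $2$; getting this arithmetic right is the whole content of the corollary.
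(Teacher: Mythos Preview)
Your proof is correct and follows exactly the paper's approach: apply Theorem~\ref{th:HeightPExplicit} with $\sigma_1=\sigma_2=\sigma$ and substitute $\chi(\cO_Y)=1$ and $\sigma^2=-1$. One small slip to fix: $\omega_Y \simeq f^*\cO_{\PP^1}(-1)$ does \emph{not} restrict trivially to a section --- it restricts to $\cO_{\PP^1}(-1)$, so $K_Y\cdot\sigma = -1$ and adjunction reads $-2 = \sigma^2 + K_Y\cdot\sigma = \sigma^2 - 1$, giving $\sigma^2=-1$ (your displayed ``$\sigma^2+1$'' would yield $\sigma^2=-3$); in any case your citation of item~(5) already suffices.
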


%We denote by $R$ the discriminant locus of $f$, i.e.\ $R$ is the set of points $v \in \PP^1$ whose fibre $f^{-1}(v)$ is singular.
%For each $v \in R$, let $m_v$ be the number of irreducible components of $f^{-1}(v)$  we denote by $\{  \Theta_{v,0}, \Theta_{v,1}, \dots, \Theta_{v, m_v-1} \}$ the irreducible components of $f^{-1}(v)$; we assume that they are ordered in such a way that $\Theta_{v,0}$ is the unique irreducible component of $f^{-1}(v)$ that intersects the section $\sigma_0(\PP^1)$.

\subsection{Extremal rational elliptic surfaces}

Let $f \colon Y \rightarrow \PP^1$ be a rational elliptic surface.
\begin{prop}[{\cite[Introduction]{miranda_persson_extremal}}] \label{prop:extremal}
	The following are equivalent:
	\begin{enumerate}
		\item the morphism $f \colon Y \to \PP^1$ has finitely many sections;
		\item $\MW(Y)$ is a finite (abelian) group;
		\item $\rank \MW(Y)=0$;
		\item $\sum_{F \text{ singular fibre}} r(F)	=8$.
		\item the number of representations of $Y$ as a blow-up of $\PP^2$ is finite;
		\item the number of rational curves on $Y$ with negative self-intersection is finite;
		\item the number of reduced effective divisors on $Y$ with negative self-intersection is finite.
	\end{enumerate}

\end{prop}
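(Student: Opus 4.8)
The plan is to split the seven conditions into the lattice-theoretic block $(1)$--$(4)$, which is essentially formal from the results recalled above, and the geometric conditions $(5)$--$(7)$, which I would link to the block through a classification of the curves of negative self-intersection on $Y$.

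\emph{The block $(1)\Leftrightarrow(2)\Leftrightarrow(3)\Leftrightarrow(4)$.} By definition the set of sections of $f$ is the group $\MW(Y)$, so $(1)\Leftrightarrow(2)$ is tautological; since $\MW(Y)$ is finitely generated, it is finite if and only if it has rank $0$, giving $(2)\Leftrightarrow(3)$; and the identity $8=\rank\MW(Y)+\rank R(Y)$ together with $\rank R(Y)=\sum_F r(F)\ge 0$ (sum over the singular fibres of $f$) shows $\rank\MW(Y)=0$ if and only if $\sum_F r(F)=8$, i.e.\ $(3)\Leftrightarrow(4)$.

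\emph{Negative curves and conditions $(6)$, $(7)$.} I would first classify the integral curves $C\subset Y$ with $C^2<0$. As $\omega_Y\simeq f^*\cO_{\PP^1}(-1)$, the class $-K_Y=F$ is nef with $F^2=0$, so $C\cdot F\ge 0$ and adjunction reads $2p_a(C)-2=C^2-C\cdot F$; hence $2p_a(C)-2\le C^2<0$ forces $p_a(C)=0$, so $C\cong\PP^1$ and $C^2=C\cdot F-2\in\{-2,-1\}$. If $C\cdot F=0$ then $C$ lies in a fibre with $C^2=-2$, so it is a component of a reducible fibre; if $C\cdot F=1$ then $f|_C$ is an isomorphism onto $\PP^1$, so $C$ is a section, equivalently a $(-1)$-curve by the previous proposition. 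Thus the integral negative curves on $Y$ are exactly the sections and the $(-2)$-curves in reducible fibres; since $f$ has finitely many singular fibres, each with finitely many components, the latter are always finite in number, so $(6)$ holds if and only if there are finitely many sections, i.e.\ $(6)\Leftrightarrow(1)$. A negative curve is in particular a reduced effective divisor of negative self-intersection, so $(7)\Rightarrow(6)$; for the converse one must control components of non-negative self-intersection, using that a prime divisor $P$ with $P^2\ge 0$ is automatically nef and that the cone $\overline{NE}(Y)$ is rational polyhedral once $\rank\MW(Y)=0$.

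\emph{Condition $(5)$ and the main obstacle.} A representation of $Y$ as a blow-up of $\PP^2$ is a birational morphism $Y\to\PP^2$, which, as $K_Y^2=0\ne 9=K_{\PP^2}^2$, decomposes as a chain of nine contractions of $(-1)$-curves $Y=Y_9\to\cdots\to Y_0=\PP^2$. If $\MW(Y)$ is finite then each $Y_i$ with $i\le 8$ is a weak del Pezzo surface (since $-K_Y=F$ is nef and this property is inherited under the contractions), hence has finitely many $(-1)$-curves, so there are finitely many such chains; conversely, if $\rank\MW(Y)\ge 1$ the infinitely many $(-1)$-curves, each of which belongs to some such chain, force infinitely many representations --- this gives $(5)\Leftrightarrow(1)$. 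The routine part is the block $(1)$--$(4)$ together with the adjunction classification; I expect the real work to be the two finiteness-propagation steps: bounding the non-negative components of a reduced negative divisor in $(7)$, and showing that every $(-1)$-curve occurs in some $\PP^2$-blow-up chain (so that finiteness of chains forces finiteness of sections) in $(5)$. These are precisely where the hypothesis $\rank\MW(Y)=0$ is genuinely used.
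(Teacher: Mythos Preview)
The paper does not prove this proposition: it is quoted from the introduction of Miranda--Persson and used as a black box, so there is no argument in the paper against which to compare yours.

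As a self-contained sketch your plan is sound. The block $(1)$--$(4)$ is indeed formal, and your adjunction argument correctly identifies the integral negative curves on $Y$ as the sections together with the $(-2)$-components of reducible fibres, yielding $(1)\Leftrightarrow(6)$. Your outline for $(5)$ also goes through: contracting any $(-1)$-curve yields a surface with $-K$ nef (nefness descends along the contraction, as you note) and $K^2=1$, i.e.\ a weak del Pezzo of degree $1$, hence a blow-up of $\PP^2$; since any single morphism $Y\to\PP^2$ contracts only nine irreducible curves, infinitely many $(-1)$-curves force infinitely many such morphisms, while the converse is the standard finiteness of $(-1)$-curves on weak del Pezzos. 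One caution on $(6)\Rightarrow(7)$: read literally as a statement about reduced effective \emph{divisors} rather than classes, it is false even in the extremal case --- a smooth fibre plus a disjoint $(-2)$-component of another fibre has self-intersection $-2$, and there are infinitely many smooth fibres. You should interpret $(7)$ up to linear equivalence; with that reading your rational-polyhedral-cone approach is the correct one, and this is exactly the step you already flag as the real work.
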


\begin{definition}[{\cite[Introduction]{miranda_persson_extremal}}]
	The rational elliptic surface $Y$ is called \emph{extremal} if it satisfies one (and hence every) condition in Proposition~\ref{prop:extremal}.
\end{definition}

\section{Preliminaries on reflexive polygons and toric del Pezzo surfaces}

The material in this section can be found in \cite{cls}.

\subsection{Lattices}

Let $N$ be a lattice, i.e.\ a free abelian group of finite rank.
Let $M$ denote the dual lattice of $N$, i.e.\ $M = \Hom_\ZZ(N,\ZZ)$.
We consider  the $\RR$-vector spaces $N_\RR = N \otimes_\ZZ \RR$ and $M_\RR = M \otimes_\ZZ \RR = \Hom_\ZZ(N,\RR)$.
Let $\langle \cdot, \cdot \rangle \colon M \times N \to \ZZ$ denote the duality pairing; we use the same symbol to denote the duality pairing between $M_\RR$ and $N_\RR$.
We  consider the two following mutually dual algebraic tori: $T_N = \Spec \CC[M] = N \otimes_\ZZ \Gm$ and $T_M = \Spec \CC[N] = M \otimes_\ZZ \Gm$.

\subsection{Polarised toric varieties} \label{sec:polarised_toric_variety}

A polytope in $N_\RR$ is the convex hull of finitely many points of $N_\RR$.
It is a rational polytope in $N$ if its vertices are rational, i.e.\ elements of $N \otimes_\ZZ \QQ$.
A \emph{lattice polytope} in $N$ is a polytope in $N_\RR$ whose vertices are elements of $N$.
The same terminology works for polytopes in $M$.

In what follows, unless otherwise stated, every polytope will have full dimension, i.e.\ the smallest affine subspace containing the polytope is the ambient vector space itself. 

If $P$ is a full-dimensional lattice polytope in $M$, then one can consider the cone over $P$ placed at height $1$
\[
\RR_{\geq 0} (P \times \{1\}) \subseteq M_\RR \oplus \RR
\]
and the projective $T_N$-toric variety
\[
Y_P := \Proj \CC \! \left[ \left(\RR_{\geq 0} (P \times \{1\}) \right) \cap  (M \oplus \ZZ)    \right],
\]
where the $\NN$-grading is given by the projection $M \oplus \ZZ \onto \ZZ$.
The toric variety $Y_P$ is associated to the \emph{normal fan} of $P$, which is the fan in $N$ consisting of the cones orthogonal (and inward-directed) to the faces of $P$.

In addition to $Y_P$, associated to $P$ there is an ample effective Cartier divisor $D_P$ on $Y_P$. If $P$ changes by translation, then $D_P$ changes by linear equivalence, so the isomorphism class of the line bundle $L_P := \cO_{Y_P}(D_P)$ does not change. There are two links between the geometry of the polytope $P$ and the geometry of the variety $Y_P$, as follows.
\begin{enumerate}
	\item For every integer $m \geq 0$, there is a natural $1$-to-$1$ correspondence between the lattice points of the polytope $mP$ and the monomial basis of $\rH^0(Y_P, L_P)$.
	\item The top self-intersection (also called degree) of the ample line bundle $L_P$ on $Y_P$ is equal to the volume\footnote{Some authors call this the `normalised volume', but we avoid to do this.} of the polytope $P$:
	\[
	(L_P)^n = \Vol(P)
	\]
	where $n = \dim Y_P = \rank N$ and $\Vol(P)$ is equal to $n!$ times the Lebesgue measure of $P$.
\end{enumerate}

If one starts from a rational polytope in $N$ and applies the constructions described above, then one gets a projective $T_M$-toric variety together with an ample $\QQ$-Cartier $\QQ$-divisor.

Of course the roles of $M$ and $N$ could be swapped: if one starts from a lattice polytope in $N$ and applies the constructions described above then one gets a polarised projective $T_M$-toric variety.

\subsection{Toric Fano varieties}

A \emph{Fano polytope} in $N$ is a full-dimensional lattice polytope $P$ in $N$ such that the origin $0 \in N$ is in the interior of $P$ and every vertex of $P$ is a primitive lattice vector of $N$, i.e.\ there is no lattice point on the segment between the origin and every vertex.
If $P$ is a Fano polytope in $N$, then the \emph{face fan} (also called the spanning fan) of $P$ is the fan in $N$ consisting of the cones over the faces of $P$;
 we denote by $X_P$ the toric variety associated to the face fan of a Fano polytope $P$.
 We have that $X_P$ is Fano, more precisely the toric boundary (i.e.\ the reduced sum of the torus-invariant prime divisors) is anticanonical, $\QQ$-Cartier and ample.
 
 If $P$ is a Fano polytope in $N$, then the \emph{polar} of $P$ is the following rational polytope in $M$:
\begin{equation*}
	P^\circ := \{ u \in M_\RR \mid \forall v \in P, \langle u,v \rangle \geq -1  \}.
\end{equation*}
One can see that $P^\circ$ is full-dimensional and that the face fan of $P$ coincides with the normal fan of $P^\circ$; therefore $X_P = Y_{P^\circ}$.

\subsection{Reflexive polytopes}

A \emph{reflexive polytope} in $N$ is a Fano polytope $P$ in $N$ such that its polar $P^\circ$ is a lattice polytope in $M$ --- this definition dates back to \cite{batyrev_toric_Fano_3folds, watanabe^2}.
If $P$ is a reflexive polytope in $N$, then the following statements hold:
\begin{itemize}
	\item $P^\circ$ is a reflexive polytope in $M$;
	\item the toric Fano variety $X_P$ is Gorenstein, i.e.\ its canonical divisor is Cartier;
	\item for every integer $m \geq 0$, there is a natural $1$-to-$1$ correspondence between the lattice points of the polytope $mP^\circ$ and the monomial basis of $\rH^0(X_P, -mK_{X_P})$.
	\item we have an equality of polytopes $(P^\circ)^\circ = P$;
	\item we have equalities of Gorenstein toric Fano varieties:
	\[
	X_P = Y_{P^\circ} \qquad \text{and} \qquad Y_P = X_{P^\circ};
	\]
	\item the ample Cartier divisor $D_P$ on $Y_P$ associated with the polytope $P$ coincides with the toric boundary of $Y_P$, i.e.\ the reduced sum of the torus-invariant prime divisors of $Y_P$, so $D_P$ is effective, reduced and anticanonical (i.e.\ linearly equivalent to $-K_{Y_P}$);
	\item if $n = \dim N = \dim X_P = \dim Y_P$, then
	\[
	(-K_{X_P})^n = \Vol(P^\circ) \qquad \text{and} \qquad (-K_{Y_P})^n = \Vol(P);
	\]
	\item the Hilbert series of $-K_{X_P}$ is equal to the Ehrhart series of $P^\circ$, and
	the Hilbert series of $-K_{Y_P}$ is equal to the Ehrhart series of $P$.
\end{itemize}
To summarise, to every reflexive polytope $P$ one can associate two Gorenstein toric Fano varieties: $X_P$ is the one associated to the face fan of $P$, whereas $Y_P$ is the one associated to the normal fan of $P$; their big tori, namely $T_N$ and $T_M$, are dual to each other.

\subsection{Reflexive polygons} 
A reflexive polygon is a reflexive polytope of dimension $2$. There are exactly $16$ reflexive polygons, up to lattice isomorphism; these are depicted in Figure~\ref{fig:reflexive_polygons}, ordered by their volume. We will refer to them by $P_3, \dots, P_9$.
\begin{figure}[ht!]
	\centering
		\def\svgwidth{\textwidth} 
	\input{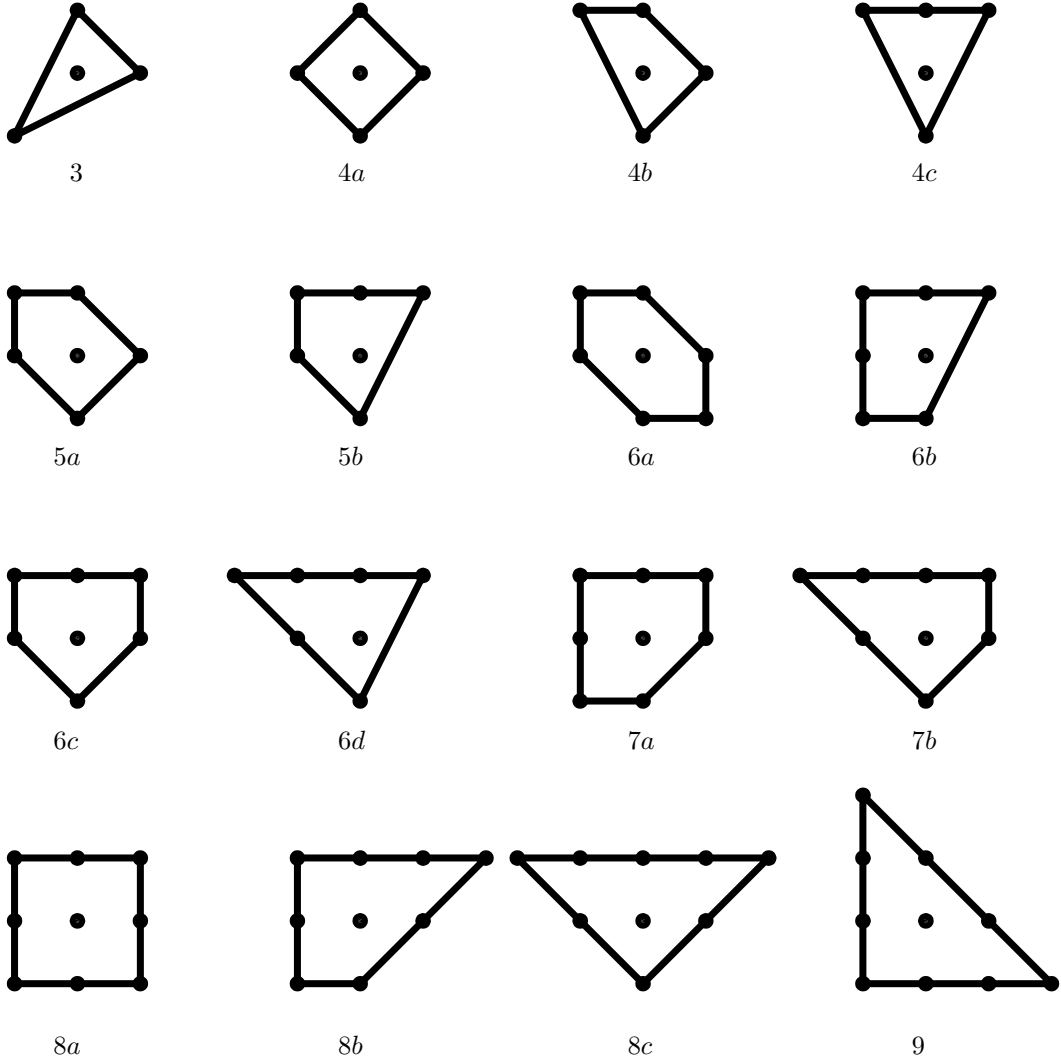}
	\caption{The 16 reflexive polygons, up to $\GL_2(\ZZ)$.}
	\label{fig:reflexive_polygons}
\end{figure}
One can check the following equalities:
\begin{itemize}
	\item $(P_3)^\circ = P_9$,
	\item $(P_{4i})^\circ = P_{8i}$ for $i = a,b,c$,
	\item $(P_{5i})^\circ = P_{7i}$ for $i = a,b$,
	\item $(P_{6a})^\circ$ is isomorphic to $P_{6a}$ via $\GL_2(\ZZ)$,
	\item $(P_{6b})^\circ$ is isomorphic to $P_{6b}$ via $\GL_2(\ZZ)$,
	\item $(P_{6c})^\circ = P_{6c}$,
	\item $(P_{6d})^\circ = P_{6d}$.
\end{itemize}
Actually, for each reflexive polygon $P$ we have
\begin{equation} \label{eq:volume_P_volume_Ppolar}
	\Vol(P) + \Vol(P^\circ) = 12;
\end{equation}
we refer the reader to \cite{reflexive_polygons_12} for more details.

\section{Rational elliptic surfaces arising from reflexive polygons} \label{sec:rational-reflexive}

Now we explain a construction which starts from a reflexive polygon $P$ and produces a rational elliptic surface $Y$ together with a non-isotrivial relatively minimal elliptic fibration $f \colon Y \to \PP^1$ with at least a section.

\begin{construction} \label{constr:1}
Fix a rank $2$ lattice $N$ and a reflexive polygon $P$ in $N$. Use the following notation as in \S\ref{sec:polarised_toric_variety}:
\begin{itemize}
	\item $Y_P$ is the (possibly singular) Gorenstein $T_M$-toric del Pezzo surface $Y_P$ associated to the normal fan of $P$;
	\item $D_P$ is the ample Cartier divisor on $Y_P$ associated with $P$, i.e.\ $D_P$ is the toric boundary of $Y_P$ and is a cycle of smooth rational curves;
	\item $L_P = \cO_{Y_P}(D_P)$ is isomorphic to $-K_{Y_P}$, which is ample.
\end{itemize}
The lattice points of $P$ correspond bijectively to the monomial basis of $\rH^0(Y_P, L_P)$, hence there is a natural $\CC$-vector space isomorphism between $\rH^0(Y_P, L_P)$ and the space of functions $P \cap N \to \CC$.
We consider two special elements of $\rH^0(Y_P, L_P)$:
\begin{itemize}
	\item $\bfone_P$ corresponds to the function $P \cap N \to \CC$ which takes the value $1$ on the origin $0 \in N$ and takes the value $0$ elsewhere;
	\item $f_P$ corresponds to the function $P \cap N \to \CC$ which takes the value $0$ on the origin and takes binomial values on the edges of $P$. This means that, for each edge $e$ of $P$ with lattice length $\ell(e)$, the values of the function  $P \cap N \to \CC$ on the ordered $\ell(e)+1$ lattice points of $e$ are the positive integers $\binom{\ell(e)}{i}$ for $0 \leq i \leq \ell(e)$.
\end{itemize}
We denote by $Z_P$ the divisor of zeroes of $f_P$; it is an anticanonical effective divisor on $Y_P$. We denote by $\pencilP$ the pencil on $Y_P$ spanned by $\bfone_P, f_P$ and we consider the corresponding rational map $Y_P \dashrightarrow \PP^1$.
\end{construction}

\begin{example}\label{ex:cubic_1}
We consider the polygon $P = P_3$ in $N = \ZZ^2$.
Then $Y_P$ is the singular cubic surface $\{ x_1 x_2 x_3 - x_0^3 = 0 \} \subset \PP^3$. There are $3$ singular points of type $A_2$: $[0:1:0:0]$, $[0:0:1:0]$, $[0:0:0:1]$.
The toric boundary $D_P$ of $Y_P$ is the union of $3$ coplanar  lines $Y_P \cap \{ x_0 = 0 \} = \{ x_1 x_2 x_3 = x_0 = 0 \}$ and the line bundle $L_P = \cO_{Y_P}(D_P)$ is isomorphic to $\cO_{\PP^3}(1) \vert_{Y_P}$.
The section $\bfone_P$ is  $x_0$, whereas the section $f_P$ is $x_1 + x_2 + x_3$.
The rational map $Y_P \dashrightarrow \PP^1$ is given by
\[
[x_0 : x_1 : x_2 : x_3] \mapsto [x_0 : x_1 + x_2 + x_3].
\]
The scheme-theoretic base locus of this pencil is reduced and supported  at the $3$ points $[0:0:1:-1]$, $[0:1:0:-1]$, $[0:1:-1:0]$ of $Y_P$.
The divisor $D_P$, the base points of $\pencilP$ and the singularities of $Y_P$ are depicted at the bottom right corner of Figure~\ref{fig:resolution_P3}.
\end{example}

Now we begin to study the base locus of the pencil $\pencilP$.

\begin{remark} \label{rmk:1}
Let $P$ be a reflexive polygon and let $Y_P, D_P, L_P, \bfone_P, f_P, Z_P, \pencilP$ be as in Construction~\ref{constr:1}.
\begin{enumerate}
	\item The divisor of zeroes of $\bfone_P$ is the toric boundary $D_P$.
	\item Fix an edge $e$ of $P$, denote by $\ell(e)$ the lattice length of $e$, and denote by $D_{P,e}$ the component of $D_P$ corresponding to $e$.
	Using toric geometry, there are two choices of an isomorphism between $D_{P,e}$ and $\PP^1$: they map the two torus-invariant points of $D_{P,e}$ to $0 = [1:0]$, $\infty = [0:1]$ and they differ by composing with the involution of $\PP^1$ given by $[x_0 : x_1] \mapsto [x_1 : x_0]$; in particular, there is a well-defined point $p_e$ on $D_{P,e}$ which corresponds to $[1:-1] \in \PP^1$. 
	The section $\bfone_P \in \rH^0(L_P)$ restricts to $0$ on $D_P$, and consequently to $0$ also on $D_{P,e}$, whereas the section $f_P \in \rH^0(L_P)$ restricts to $(x_0 + x_1)^{\ell(e)}$ on $D_{P,e}$ under both of the isomorphisms between $D_{P,e}$ and $\PP^1$ discussed above.
Therefore the effective divisors $Z_P = \{f_P = 0\}$ and $D_{P,e}$ intersect in the point $p_e$ with multiplicity $\ell(e)$. More precisely,
the scheme-theoretic intersection $Z_P \cap D_{P,e}$ is given by the ideal $(x,y^{\ell(e)})$, where $x,y$ are local coordinates of $Y_P$ at the smooth point $p_e$ such that $D_{P,e}$ is locally defined by $x=0$.

\item The effective divisors $Z_P = \{ f_P = 0 \}$ and $D_P = \sum_{e \leq P} D_{P,e} = \{ \bfone_P = 0 \}$ intersect at the points $\{ p_e \mid e \text{ edge of } P \}$ with local structure given above.

\item The base points of the pencil $\pencilP$ are disjoint from the singularities of $Y_P$.

\item For each edge $e$ of $P$, the indeterminacies at the point $p_e$ of the rational map $Y_P \dashrightarrow \PP^1$ given by the pencil $\pencilP$ are resolved by blowing up $\ell(e)$ times above $p_e$ in the proper transform of $D_P$; this introduces a chain of smooth rational curves: a $(-1)$-curve meeting transversally the proper transform of $D_{P,e}$, and $\ell(e)-1$ $(-2)$-curves.
\end{enumerate}
\end{remark}

\begin{lemma} \label{lem:connectedness_members_dP}
	Let $P$ be a reflexive polygon and let $\pencilP$ be the pencil on the surface $Y_P$, as in Construction~\ref{constr:1}.
	Then every member $V$ of $\pencilP$ is connected and such that $h^1(\cO_V) = 1$.
\end{lemma}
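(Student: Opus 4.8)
The plan is to exploit the fact that $\pencilP$ is spanned by two anticanonical sections, so that every member $V$ of $\pencilP$ is an anticanonical effective divisor on the Gorenstein del Pezzo surface $Y_P$, i.e.\ $V \in |-K_{Y_P}|$. First I would record that $\omega_{Y_P} \simeq \cO_{Y_P}(-D_P) \simeq L_P^{-1}$ and that, by Kodaira vanishing (or the Kawamata--Viehweg vanishing for the Gorenstein del Pezzo $Y_P$, which applies since $-K_{Y_P}$ is ample and Cartier), one has $\rH^1(Y_P, \cO_{Y_P}) = 0$ and $\rH^1(Y_P, \cO_{Y_P}(-V)) = \rH^1(Y_P, \omega_{Y_P}) = 0$, the latter by Serre duality and $\rH^1(Y_P,\cO_{Y_P})=0$. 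Then the short exact sequence
\[
0 \to \cO_{Y_P}(-V) \to \cO_{Y_P} \to \cO_V \to 0
\]
gives at once $h^0(\cO_V) = 1$ and $h^1(\cO_V) = h^2(Y_P, \cO_{Y_P}(-V))$.

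For the vanishing of $h^0$ beyond a point and for $h^1(\cO_V)=1$: from $h^0(\cO_V)=1$ one deduces that $V$ is connected (a disconnected reduced-or-not subscheme with $\cO_V$ having global sections only the constants is connected; more precisely $H^0(\cO_V)=\CC$ forces $\Spec$ of a single connected component, so $V$ is connected as a scheme). For $h^1$, compute $h^2(Y_P, \cO_{Y_P}(-V)) = h^2(Y_P, \omega_{Y_P}) = h^0(Y_P, \cO_{Y_P}) = 1$ by Serre duality on the Cohen--Macaulay (indeed Gorenstein) projective surface $Y_P$. Hence $h^1(\cO_V) = 1$, as claimed. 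Alternatively, one can phrase this via the arithmetic genus: since $V \in |-K_{Y_P}|$ and $Y_P$ is a Gorenstein del Pezzo surface, adjunction gives $\omega_V \simeq \cO_V$, so $V$ has trivial dualizing sheaf, and then $h^1(\cO_V) = h^0(\omega_V) = h^0(\cO_V) = 1$ once connectedness is known.

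The main point to be careful about is that $Y_P$ may be singular, so one should justify that Serre duality and Kodaira-type vanishing are available. This is fine: $Y_P$ is a normal Gorenstein projective surface, hence Cohen--Macaulay, so Serre duality holds with the dualizing sheaf $\omega_{Y_P}$, which here is a line bundle equal to $\cO_{Y_P}(K_{Y_P})$; and $-K_{Y_P}$ is ample and Cartier, so Kodaira vanishing in the form $\rH^i(Y_P, \omega_{Y_P} \otimes \cO_{Y_P}(-K_{Y_P})) = \rH^i(Y_P,\cO_{Y_P}) = 0$ for $i>0$ holds for log terminal (in particular Gorenstein canonical) surfaces. Equivalently, since $Y_P$ has only rational double points (being a Gorenstein del Pezzo surface), one may pass to the minimal resolution $\widetilde{Y}_P \to Y_P$, where $\rH^i(\widetilde{Y}_P, \cO) = \rH^i(Y_P, \cO)$, and run the same argument on the smooth rational surface $\widetilde{Y}_P$ with the pullback of $V$; I would mention this as the cleanest route. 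The rest is the routine long-exact-sequence bookkeeping indicated above.
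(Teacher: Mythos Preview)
Your proposal is correct and follows essentially the same approach as the paper: the short exact sequence $0 \to \cO_{Y_P}(-V) \to \cO_{Y_P} \to \cO_V \to 0$, combined with Serre duality and Kodaira vanishing to compute the cohomology of $\cO_{Y_P}(-V) \simeq \omega_{Y_P}$. If anything, you are more careful than the paper about justifying why Serre duality and Kodaira-type vanishing apply on the possibly singular Gorenstein surface $Y_P$; the paper simply invokes ``the Kodaira Vanishing Theorem'' without further comment.
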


\begin{proof}
	Let $L_P = \cO_{Y_P}(D_P) \simeq \omega_{Y_P}^\vee$ be the line bundle on $Y_P$ associated to $P$.
	More generally, we prove that for every non-zero global section $s$ of $L_P$ the divisor $V = \{ s=0 \}$ of zeroes of $s$ is connected.
	Consider the short exact sequence
	\[
	0 \longrightarrow L_P^\vee \overset{s}{\longrightarrow} \cO_{Y_P} \longrightarrow \cO_V \longrightarrow 0
	\]
	and the corresponding long exact sequence in cohomology. We observe that $\rH^0(L_P^\vee)$ and $\rH^1(L_P^\vee)$ vanish, because they are Serre-dual to $\rH^2(\cO_{Y_P})$ and $\rH^1(\cO_{Y_P})$ which vanish by the Kodaira Vanishing Theorem.
	From the long exact sequence in cohomology we deduce that the restriction $\rH^0(\cO_{Y_P}) \to \rH^0(\cO_V)$ is an isomorphism, hence $V$ is connected.
	
	Since $\rH^2(L_P^\vee)$ is Serre-dual to $\rH^0(\cO_{Y_P})$, we have that $\rH^2(L_P^\vee)$ has dimension $1$.
	The vanishing of $\rH^1(\cO_{Y_P})$ and of $\rH^2(\cO_{Y_P})$ implies that $\rH^1(\cO_V)$ has dimension $1$.
\end{proof}

\begin{construction} \label{constr:2}
Let $P$ be a reflexive polygon and let $Y_P, D_P, \pencilP$ be as in Construction~\ref{constr:1}.
We construct $3$ projective surfaces with effective divisors on them, as follows.
\begin{itemize}
	\item $Y' \to Y_P$ is the minimal resolution of the indeterminacies of the pencil $\pencilP$ in such a way $Y'$ is smooth in a neighbourhood of the preimage of the base locus of $\pencilP$; in other words, $Y' \to Y_P$ is obtained from $Y_P$ by blowing up $\ell(e)$ times above the point $p_e$ in the proper transform of $D_P$, for all edges $e$ of $P$.
	Let $D' \subset Y'$ be the strict transform of $D_P \subset Y_P$.
	\item $\minresYP \to Y_P$ is the minimal resolution of the singularities of $Y_P$. Recall that they are DuVal singularities of type A, because $Y_P$ is Gorenstein toric of dimension $2$, so $\minresYP \to Y_P$ is crepant.
	Let $\barDP \subset \minresYP$ be the preimage of $D_P \subset Y_P$.
	\item Consider $Y = \minresYP \times_{Y_P} Y'$. Let $D \subset Y$ be the strict transform of $\barDP \subset \minresYP$ along $Y \to \minresYP$. 
\end{itemize}
We denote by
\[
f \colon Y \to \PP^1
\]
the composition of the proper birational morphism $Y \to Y_P$ with the rational map $Y_P \dashrightarrow \PP^1$ induced by the pencil $\pencilP$.
\begin{equation*}
	\xymatrix{
		Y	\ar[d] \ar[r] & Y' \ar[d] \ar@/^1.0pc/[rd] \\
		\minresYP \ar[r] & Y_P \ar@{-->}[r] & \PP^1
	}
\end{equation*}
\end{construction}

\begin{remark}
Let $P$ be a reflexive polygon in a rank-$2$ lattice $N$, let $Y_P, \pencilP$ be as in Construction~\ref{constr:1}, and let $Y', \minresYP, Y$ be as in Construction~\ref{constr:2}.

By Remark~\ref{rmk:1}(3), the base points of the pencil $\pencilP$ are disjoint from the singularities of $Y_P$.
Therefore $Y'$ and $Y_P$ have the same singularities, $Y$ is the minimal resolution of the singularities of $Y'$, the morphism $Y \to Y'$ is crepant, $Y$ is a smooth rational projective surface, and $D$ is the preimage of $D'$.
\end{remark}

\begin{example} \label{ex:cubic_2} 
We continue Example~\ref{ex:cubic_1}, so $P = P_3$.
The surface $Y_P$ is depicted at the bottom right corner of Figure~\ref{fig:resolution_P3}: the black curves denote the $3$ components of $D_P$, the red points are the $3$ $A_2$-singularities, the blue points are the $3$ base points of the pencil $\pencilP$.

\begin{figure}[ht!]
	\centering
	\def\svgwidth{0.6\textwidth}
%% Creator: Inkscape 1.1.1 (c3084ef, 2021-09-22), www.inkscape.org
%% PDF/EPS/PS + LaTeX output extension by Johan Engelen, 2010
%% Accompanies image file '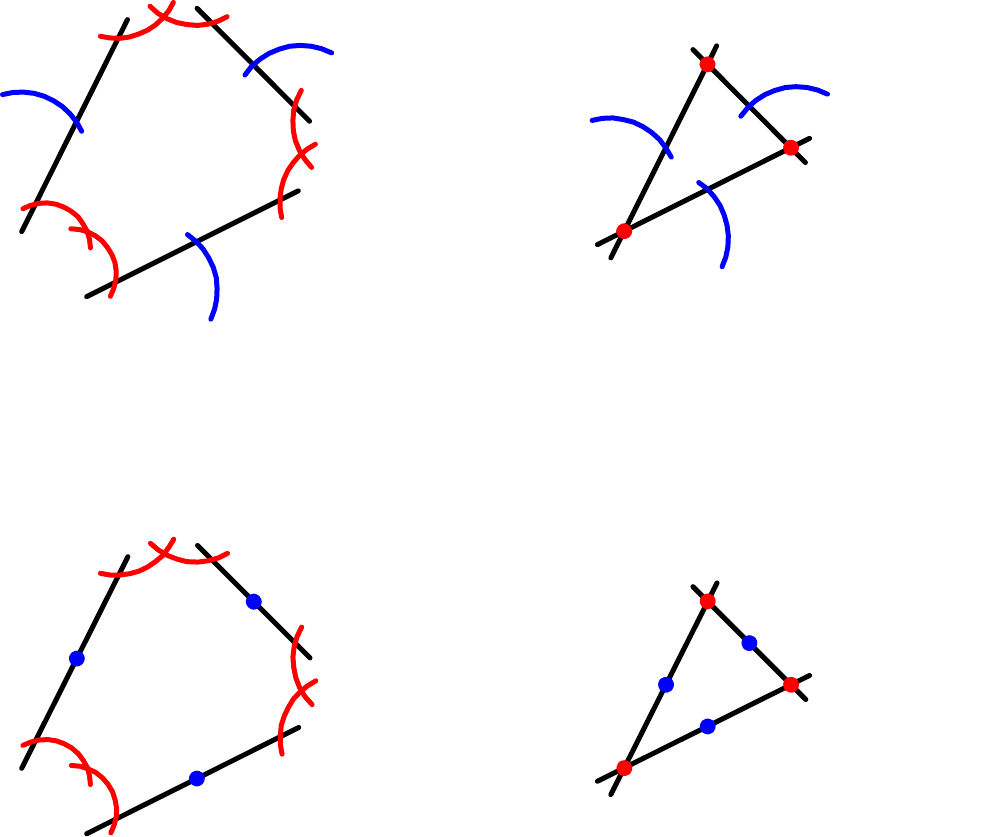' (pdf, eps, ps)
%%
%% To include the image in your LaTeX document, write
%%   \input{<filename>.pdf_tex}
%%  instead of
%%   \includegraphics{<filename>.pdf}
%% To scale the image, write
%%   \def\svgwidth{<desired width>}
%%   \input{<filename>.pdf_tex}
%%  instead of
%%   \includegraphics[width=<desired width>]{<filename>.pdf}
%%
%% Images with a different path to the parent latex file can
%% be accessed with the `import' package (which may need to be
%% installed) using
%%   \usepackage{import}
%% in the preamble, and then including the image with
%%   \import{<path to file>}{<filename>.pdf_tex}
%% Alternatively, one can specify
%%   \graphicspath{{<path to file>/}}
%% 
%% For more information, please see info/svg-inkscape on CTAN:
%%   http://tug.ctan.org/tex-archive/info/svg-inkscape
%%
\begingroup%
  \makeatletter%
  \providecommand\color[2][]{%
    \errmessage{(Inkscape) Color is used for the text in Inkscape, but the package 'color.sty' is not loaded}%
    \renewcommand\color[2][]{}%
  }%
  \providecommand\transparent[1]{%
    \errmessage{(Inkscape) Transparency is used (non-zero) for the text in Inkscape, but the package 'transparent.sty' is not loaded}%
    \renewcommand\transparent[1]{}%
  }%
  \providecommand\rotatebox[2]{#2}%
  \newcommand*\fsize{\dimexpr\f@size pt\relax}%
  \newcommand*\lineheight[1]{\fontsize{\fsize}{#1\fsize}\selectfont}%
  \ifx\svgwidth\undefined%
    \setlength{\unitlength}{289.83769532bp}%
    \ifx\svgscale\undefined%
      \relax%
    \else%
      \setlength{\unitlength}{\unitlength * \real{\svgscale}}%
    \fi%
  \else%
    \setlength{\unitlength}{\svgwidth}%
  \fi%
  \global\let\svgwidth\undefined%
  \global\let\svgscale\undefined%
  \makeatother%
  \begin{picture}(1,0.83094858)%
    \lineheight{1}%
    \setlength\tabcolsep{0pt}%
    \put(0,0){\includegraphics[width=\unitlength,page=1]{Disegno_cubic5.pdf}}%
    \put(0.29,0.54){\color[rgb]{0,0,0}\makebox(0,0)[lt]{\lineheight{1.25}\smash{\begin{tabular}[t]{l}$Y$\end{tabular}}}}%
    \put(0.76,0.54){\color[rgb]{0,0,0}\makebox(0,0)[lt]{\lineheight{1.25}\smash{\begin{tabular}[t]{l}$Y'$\end{tabular}}}}%
    \put(0.29,0.03){\color[rgb]{0,0,0}\makebox(0,0)[lt]{\lineheight{1.25}\smash{\begin{tabular}[t]{l}$\minresYP$\end{tabular}}}}%
    \put(0.76,0.03){\color[rgb]{0,0,0}\makebox(0,0)[lt]{\lineheight{1.25}\smash{\begin{tabular}[t]{l}$Y_P$\end{tabular}}}}%
  \end{picture}%
\endgroup%

	\caption{The surfaces constructed in Construction~\ref{constr:2}, when the reflexive polygon $P$ is $P_3$, as considered in Example~\ref{ex:cubic_1} and in Example~\ref{ex:cubic_2}.
		The red points denote the singularities of $Y_P$ and of $Y'$.
		The blue points in $Y_P$ denote the base points of the pencil $\pencilP$.
		The red curves denote the exceptional curves of $\minresYP \to Y_P$ and of $Y \to Y'$.
		The blue curves denote the exceptional curves arising from resolving the indeterminacies of $Y_P \dashrightarrow \PP^1$ (i.e.\ of the pencil $\pencilP$) and of $Y \dashrightarrow \PP^1$.
		The black segments denote the irreducible components of the toric boundary $D_P$ in $Y_P$ and their proper transforms in $\minresYP$, $Y'$ and $Y$.}
	\label{fig:resolution_P3}
\end{figure}

The surface $Y'$ is obtained by blowing up the $3$ base points, so it has $3$ $(-1)$-curves, which are depicted in blue in the top right corner of  Figure~\ref{fig:resolution_P3}. It has $3$ $A_2$-singularities.

The surface $\minresYP$ is obtained by resolving the $3$ $A_2$-singularities of $Y_P$ with $3$ chains of $2$ $(-2)$-curves; they are depicted in red in the bottom left corner of Figure~\ref{fig:resolution_P3}.
The surface $\minresYP$ is smooth.
The strict transforms in $\minresYP$ of the $3$ components of $D_P$ are $(-1)$-curves.

The surface $\minresYP$ can be constructed torically. The surface $Y_P$ is the toric variety associated to the normal fan of $P = P_3$, i.e.\ the face fan of $P^\circ = P_9$: the rays of this fan are in black in Figure~\ref{fig:cubic_fan}.
The fan of $\minresYP$ is the complete fan with the $9$ rays depicted in Figure~\ref{fig:cubic_fan}.

\begin{figure}[ht!]
	\centering
		\def\svgwidth{0.5\textwidth}
	\includegraphics{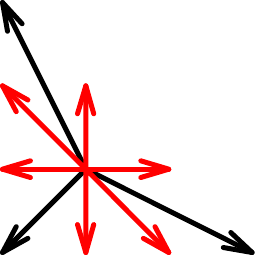}
	\caption{The rays of the fan defining the smooth toric surface $\minresYP$ when the reflexive polygon $P$ is $P_3$, as considered in  Example~\ref{ex:cubic_2}.
	The black rays correspond to the proper transforms of the irreducible components of the toric boundary of $Y_P$, whereas the red rays correspond to the exceptional curves of $\minresYP \to Y_P$.}
	\label{fig:cubic_fan}
\end{figure}

The surface $Y$, depicted in the top left corner of Figure~\ref{fig:resolution_P3} is smooth. The exceptional curves of $Y \to \minresYP$, depicted in blue, are $(-1)$-curves,
the exceptional curves of $Y \to Y'$, depicted in red, are $(-2)$-curves, the strict transforms  of the components of $D_P$, depicted in black, are $(-2)$-curves.
Therefore the divisor $D$, which is the sum of the red curves and of the black ones is a cycle of $9$ $(-2)$-curves.

\end{example}

\begin{prop} \label{prop:2}
Let $P$ be a reflexive polygon in a rank-$2$ lattice $N$, let $Y_P, D_P, \pencilP$ be as in Construction~\ref{constr:1}, and let $Y', \minresYP, Y, f$ and $D', \barDP, D$ be as in Construction~\ref{constr:2}.
Then the following statements hold.
\begin{enumerate}
	\item The divisors $D_P, D', \barDP, D$ on $Y_P, Y', \minresYP, Y$, respectively, are anticanonical.
	
	\item $\minresYP$ is the smooth toric surface associated to the complete fan in $M$ whose rays are the rays with apex at the origin and passing through all lattice boundary points of $P^\circ$.
	
	\item $\barDP$ is the toric boundary of $\minresYP$ and consists of a reduced cycle of smooth rational curves.
	
	\item $\Pic(\minresYP)$ is a free abelian group of rank equal to $10 - \Vol(P) = \Vol(P^\circ) - 2$.

	\item The exceptional locus of $Y' \to Y_P$ (resp.\ of $Y \to \minresYP$) has $\Vol(P)$ irreducible components, all of which are smooth rational curves contained in the smooth locus of $Y'$ (resp.\ $Y$) and are either $(-1)$-curves or $(-2)$-curves.

	\item $\Pic(Y)$ is a free abelian group of rank $10$.

	\item $D$ is a reduced cycle of $12 - \Vol(P)$ $(-2)$-curves and is the fibre of $f \colon Y \to \PP^1$ over $\infty = [0:1] \in \PP^1$.

	\item The morphism $f \colon Y \to \PP^1$ is a non-isotrivial relatively minimal elliptic fibration with at least a section, hence $Y$ is a rational elliptic surface.
	
	\item The Mordell--Weil group of $Y$ has cardinality $ \geq \# \{ \text{edges of } P \}$.
\end{enumerate}
\end{prop}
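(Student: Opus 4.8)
\emph{Overall plan.} The plan is to obtain the whole proposition from a toric analysis of $\minresYP$, which already yields (1)--(6) with no input from elliptic surface theory, and then to read off the fibration structure (7)--(8) and, finally, the Mordell--Weil bound (9), which is the real target.

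\emph{Toric part.} Since $P$ is reflexive, the normal fan of $P$ is the face fan of $P^\circ$, so the singularities of $Y_P$ correspond to the edges of $P^\circ$, an edge of lattice length $L$ carrying an $A_{L-1}$ singularity whose minimal (hence crepant) resolution inserts exactly the $L-1$ interior lattice points of that edge as new rays. Thus the fan of $\minresYP$ has as rays precisely the lattice points of $\partial P^\circ$, which is (2); statement (3) is then the standard description of the toric boundary of a smooth complete toric surface, so $\barDP$ is anticanonical. The number of rays equals the lattice perimeter of $P^\circ$, which is $\Vol(P^\circ)$ by Pick's theorem (as $P^\circ$ is reflexive), so $\rank\Pic(\minresYP)=\Vol(P^\circ)-2=10-\Vol(P)$ by \eqref{eq:volume_P_volume_Ppolar}; this is (4). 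By Remark~\ref{rmk:1}(4)--(5), the exceptional locus of $Y'\to Y_P$ (and of $Y\to\minresYP$) is a disjoint union, over the edges $e$ of $P$, of chains of smooth rational $(-1)$- and $(-2)$-curves contained in the smooth locus, with $\sum_e\ell(e)$ components in total, i.e.\ the lattice perimeter of $P$, which is $\Vol(P)$; this is (5), and it gives $\rank\Pic(Y)=\rank\Pic(\minresYP)+\Vol(P)=10$, which is (6) (freeness being automatic for smooth rational surfaces). Finally, (1): $D_P$ is anticanonical by the reflexive-polytope dictionary and $\barDP$ by (3), while the cases of $D'$ and $D$ follow by tracking discrepancies through the birational morphisms of Construction~\ref{constr:2}, using that each blow-up centre there lies, as a smooth point, on the strict transform of $D_P$ (resp.\ $\barDP$).

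\emph{The fibration.} By Lemma~\ref{lem:connectedness_members_dP} the members of $\pencilP$ are connected, and on $Y$ the pencil is base-point free, so $f\colon Y\to\PP^1$ is a morphism with connected fibres. A local computation of the pencil near the base points $p_e$ and near the singular points of $Y_P$ identifies the fibre over $\infty$: the only curves mapping to $\infty$ are the strict transform of $D_P$ together with the exceptional curves of $\minresYP\to Y_P$ (over each $p_e$, the unique $(-1)$-curve of the chain is horizontal and the remaining curves of the chain map to a single point $\ne\infty$), and the computation shows no multiplicities occur, so $f^{-1}(\infty)=D$ is reduced and equals $\barDP$'s strict transform. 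Hence $D$, being anticanonical and a fibre, gives $-K_Y\equiv F$ and $K_Y^2=0$; as $f$ is flat with smooth general fibre (characteristic $0$), that general fibre has arithmetic genus $p_a(D)=1$, so $f$ is an elliptic fibration. Since $-K_Y\equiv F$, every $(-1)$-curve $C$ on $Y$ satisfies $C\cdot F=C\cdot(-K_Y)=1$, so no $(-1)$-curve lies in a fibre and $f$ is relatively minimal; it has a section (see below) and the reducible fibre $D$, so $Y$ is a rational elliptic surface. By Kodaira's classification, $D$ is then of type $I_{12-\Vol(P)}$ and its components are $(-2)$-curves, completing (7); and since $f$ has a multiplicative fibre as well as smooth fibres, its $j$-map is non-constant, so $f$ is non-isotrivial, completing (8).

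\emph{Mordell--Weil bound, and the main difficulty.} For (9): for each edge $e$ of $P$, Construction~\ref{constr:2} and Remark~\ref{rmk:1}(5) produce on $Y$ a $(-1)$-curve $C_e$ lying over $p_e$ and untouched by $Y\to\minresYP$. As above $C_e\cdot F=1$ and $C_e$ is irreducible, so $f|_{C_e}\colon C_e\to\PP^1$ has degree $1$, i.e.\ $C_e$ is a section; in particular $f$ admits a section. Distinct edges give distinct curves $C_e$, hence distinct sections, hence distinct classes in $\NS(Y)/T(Y)\simeq\MW(Y)$ by Theorem~\ref{thm:iso}; therefore $\#\MW(Y)\ge\#\{\text{edges of }P\}$. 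I expect the genuine work to lie in the toric part and, above all, in the local verification that $f^{-1}(\infty)=D$ is a reduced cycle of $12-\Vol(P)$ curves: this identification is what makes $-K_Y\equiv F$, and from it the genus computation, relative minimality, non-isotriviality, and the section count all follow quickly, so the elliptic-surface input needed beyond it is essentially formal given \S\ref{sec:r.e.s.}.
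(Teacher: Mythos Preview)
Your argument is correct and agrees with the paper on the toric portion (1)--(6), but for (7)--(8) you take a genuinely different route. The paper computes the self-intersections of the components of $D$ directly: using the projection formula for the crepant morphism $\minresYP\to Y_P$ it obtains $(\overline{D}_{P,e})^2=\ell(e)-2$, and then after the $\ell(e)$ blow-ups this drops to $-2$, yielding (7) by hand; relative minimality is then proved by a Picard-rank contradiction (if one could contract a $(-1)$-curve in a fibre, the resulting rational elliptic surface would still have Picard rank $10$, which is impossible). By contrast, you first deduce $-K_Y\sim F$ from (1) and the identification $D=f^{-1}(\infty)$, use adjunction to see that any $(-1)$-curve $C$ has $C\cdot F=1$ and hence cannot lie in a fibre, and only then invoke Kodaira's classification to conclude that the reduced cycle $D$ is an $I_n$ fibre with $(-2)$-components. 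Your route is slicker for relative minimality and avoids the projection-formula computation, at the cost of treating Kodaira's list as a black box; the paper's route is more self-contained for (7) but needs the extra Picard-rank step for (8). One minor remark on (9): distinct sections are by definition distinct elements of $\MW(Y)$, so the appeal to Theorem~\ref{thm:iso} is unnecessary (though not incorrect).
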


\begin{proof}
	Below we will freely use the equality \eqref{eq:volume_P_volume_Ppolar}.
	
	 (1) We already know that $D_P$ is an anticanonical divisor on $Y_P$. Since $\minresYP \to Y_P$ is crepant and $\barDP$ is the pull-back of $D_P$, also $\barDP$ is anticanonical.
	When we blow up a smooth point $p$, then the strict transform of an anticanonical divisor which is reduced and smooth at $p$ is anticanonical. This implies that $D'$ is anticanonical on $Y'$.
	Using the crepant morphism $Y \to Y'$ or the morphism $Y \to \minresYP$ which is a composition of blow ups of smooth points, we deduce that $D$ is anticanonical on $Y$.
	
	(2) Minimal resolutions of toric surfaces can be constructed torically.
The fan of the toric variety $Y_P$ is the normal fan of $P$, i.e.\ the face fan of $P^\circ$: its rays are the rays with apex at the origin and passing through the vertices of $P^\circ$.
The minimal resolution $\minresYP$ of $Y_P$ is associated to a refinement of this fan, namely to the fan described in (2).

(3) Obvious.

(4) The divisor $\barDP$ is the reduced sum of two types of curves: the exceptional curves $\overline{E}_j$ of $\minresYP \to Y_P$ (which are $(-2)$-curves), and the strict transforms $\overline{D}_{P,e}$ of $D_{P,e}$ with respect to $\minresYP \to Y_P$, as $e$ runs among the edges of $P$.
The number of components $\overline{E}_j$ is equal to the number of interior lattice points of the edges of $P^\circ$.
The number of components $\overline{D}_{P,e}$ is equal to the number of vertices of $P^\circ$.
In total, the number of components of $\barDP$ is equal to $\Vol(P^{\circ})$, which is also the number of rays of the fan defining $\minresYP$. 
In other words, the number of torus-invariant prime divisors on $\minresYP$  is $\Vol(P^\circ) -2$.
By \cite[Theorem~4.1.3]{cls} the rank of the divisor class group of $\minresYP$ is $\Vol(P^\circ) -2$.
Since $\minresYP$ is $\QQ$-factorial, the rank of the Picard group of $\minresYP$ is $\Vol(P^\circ) -2$.
Moreover, the Picard group of $\minresYP$ is free abelian by \cite[Proposition~4.2.5]{cls}.

Now we make a computation which will be useful below. Fix an edge $e$ of $P$. Now we compute the self-intersection of $\overline{D}_{P,e}$ in $\minresYP$. Since $D_{P,e} \subset Y_P$ is associated to the edge $e$ of lattice length $\ell(e)$ in the polytope $P$ which is given by the polarisation $D_P$, we have $D_{P,e} \cdot D_P = \ell(e)$.
Since $\pi \colon \minresYP \to Y_P$ is crepant, $D_P$ is anticanonical on $Y_P$, and $\barDP$ is anticanonical on $\minresYP$, by using projection formula we have
\begin{align*}
	\ell(e) = D_{P,e} \cdot D_P = \pi_* \overline{D}_{P,e} \cdot D_P = \overline{D}_{P,e} \cdot \pi^* D_P = \overline{D}_{P,e} \cdot \overline{D}_P = (\overline{D}_{P,e} )^2 + 2.
\end{align*}
The last equality holds because $\barDP$ is the reduced sum of a cycle of smooth curves meeting transversally and $\overline{D}_{P,e}$ is a component of $\barDP$.
Therefore
\begin{equation} \label{eq:self-intersection_DPe}
(\overline{D}_{P,e} )^2 = \ell(e) - 2.
\end{equation}
This equality will be useful below.

(5) Obvious.

(6) Combine (4) and (5).

(7) In the proper birational morphism $Y \to \minresYP$ we blow up (consecutively) $\ell(e)$ times an interior point of $\overline{D}_{P,e}$, for each edge $e$ of $P$.
Let $D_e$ denote the strict transform of $\overline{D}_{p,e}$ with respect to $Y \to \minresYP$.
This implies that $(D_e)^2 = (\overline{D}_{P,e})^2 - \ell(e)$.
Combining with \eqref{eq:self-intersection_DPe} we get that $D_e$ is a $(-2)$-curve.
Therefore $D$ is a cycle of $\Vol(P^\circ) = 12 - \Vol(P)$ $(-2)$-curves.

(8) By Lemma~\ref{lem:connectedness_members_dP} every fibre of $f \colon Y \to \PP^1$ is connected and the general one is a smooth elliptic curve.
The fibre $D = f^{-1}(\infty)$ is reducible and contains no $(-1)$-curve.
For each edge $e$ of $P$, the last exceptional curve of $Y \to \minresYP$ over the point in $\overline{D}_{P,e}$ is a $(-1)$-curve and a section of $f$.
We now prove that $f$ is relatively minimal and $Y$ is a rational elliptic surface.

By contradiction, we can contract all $(-1)$-curves of $Y$ contained in the fibre of $f$ (one by one), so we factorise $f$ as
\[
Y \overset{\epsi}\longrightarrow Y_0 \overset{f_0}{\longrightarrow} \PP^1,
\]
where $Y_0$ is smooth and projective, $\epsi$ is proper birational and not an isomorphism, and $f_0$ is relatively minimal.
It is clear that $f_0$ is a relatively minimal elliptic fibration which admits at least a section.
Hence $Y_0$ is a rational elliptic surface, in particular the Picard rank of $Y_0$ is $10$. 
But, by (6) the Picard rank of $Y$ is $10$, hence $\epsi \colon Y \to Y_0$ is an isomorphism, and this is a contradiction.

We also have that $f \colon Y \to \PP^1$ is not isotrivial (i.e.\ the smooth fibres of $f$ are not all isomorphic) because $f$ has at least a non-smooth semistable fibre (i.e.\ a fibre of type $I_n$ for some $n \geq 1$), namely $D$. Indeed, a non-smooth semistable fibre has $j$-invariant equal to $\infty$ (see for instance \cite[p.~540]{miranda_persson_extremal}), so $j$ cannot be constant in a punctured neighbourhood of the critical value corresponding to a non-smooth semistable fibre.

(9) In the proof of (8)  we have constructed a section of $f$ for each edge of $P$.
\end{proof}

\begin{remark}
	Let $P$ be a reflexive polygon and let $Y,D,f$ be as in Construction~\ref{constr:2}.
	Then $(Y,D)$ is a \emph{log Calabi--Yau pair}, i.e.\ $Y$ is smooth projective and $D$ is a simple normal crossing reduced divisor on $Y$ such that $K_Y + D$ is linearly trivial.
	Moreover, $f \colon (Y,D) \to (\PP^1, \infty)$ and $D = f^{-1}(\infty)$.
\end{remark}

\section{Analysis of the singular fibres} \label{sec:analysis}

Here we consider each of the 16 reflexive polygons $P$ and the elliptic fibration $f \colon Y \to \PP^1$ constructed in Construction~\ref{constr:2}.
For every $\lambda \in \CC$, let $F_\lambda$ denote the zero-locus of   $f_P + \lambda \bfone_P$ in $Y_P$; this is an element of $\pencilP$.
For every $\lambda \in \CC$, let $f^{-1}(\lambda) \subset Y$ denote the fibre of $f$ which contains the strict transform of $F_\lambda$, i.e.\ the fibre of $f$ over $[1: -\lambda] \in \PP^1$.
The morphism $f^{-1}(\lambda) \to F_\lambda$ is not always an isomorphism: it depends if $F_\lambda$ is non-reduced at the base points of the pencil $\pencilP$; in this case $f^{-1}(\lambda)$ can acquire certain exceptional $(-2)$-curves of $Y \to \minresYP$, with multiplicity.

We want to study the singular fibres of $f$. Recall that $D = f^{-1}(\infty)$, the fibre at infinity, is a singular fibre of type $I_{12-\Vol(P)}$; in particular $\chitop(D) = 12 - \Vol(P)$, therefore the sum of the topological Euler characteristics of the remaining singular fibres of $f$ is equal to $\Vol(P)$.

\subsection{3} \label{sec:3}
Set $P = P_3$.
The divisor $F_\lambda$ on $Y_P$ is given by the equations $x_0^3 = x_1 x_2 x_3$, $x_1  + x_2 + x_3 + \lambda x_0 = 0$ in $\PP^3$, so $F_\lambda$ is isomorphic to the plane curve
\[
\{  x_0^3 + x_1 x_2 (\lambda x_0 + x_1 + x_2) = 0  \} \subset \PP^2.
\]
The partial derivatives of this cubic polynomial are
\begin{gather*}
	3 x_0^2 + \lambda x_1 x_2, \\
	x_2 (\lambda x_0 + 2 x_1 + x_2), \\
	x_1 (\lambda x_0 + x_1 + 2 x_2).
	\end{gather*}
The singularities of $F_\lambda$ are given by 
\[
\left\{ 
\begin{array}{l}
x_1 = 0 \\
\lambda x_0 + 2 x_1 + x_2 = 0 \\
3 x_0^2 +\lambda x_1 x_2 = 0
\end{array}
\right.
\! , \
\left\{ 
\begin{array}{l}
	\lambda x_0 + x_1 + 2 x_2 = 0 \\
	 x_2 = 0 \\
	3 x_0^2 +\lambda x_1 x_2 = 0
\end{array}
\right.
\! , \
\left\{ 
\begin{array}{l}
	\lambda x_0 + x_1 + 2 x_2 = 0 \\
	\lambda x_0 + 2 x_1 + x_2 = 0 \\
	3 x_0^2 +\lambda x_1 x_2 = 0
\end{array}
\right.
\]
which are equivalent to
\[
\left\{
\begin{array}{l}
	x_1 = x_2 \\
	\lambda x_0 + 3 x_1 = 0 \\
	3 x_0^2 + \lambda x_1^2 = 0
\end{array}
\right.
\]
i.e.\ to
\[
\left\{
\begin{array}{l}
	x_1 = x_2 \\
	3x_1 = - \lambda x_0 \\
	(27 + \lambda^3) x_0^2 = 0
\end{array}
\right.
\! .
\]
Let $\zeta \in \CC$ be a primitive $3$rd root of unity.
Then $F_\lambda$ is smooth for $\lambda \neq -3, -3 \zeta, -3 \zeta^2$.

Let us analyse the singularities of $F_{-3} \subset \PP^2$. The unique singular point of $F_{-3} \subset \PP^2$ is $[1:1:1] \in \PP^2$.
We dehomogeneise the polynomial defining $F_{-3}$ using affine coordinates $x = (x_1-x_0)/x_0$, $y = (x_2-x_0)/x_0$ (i.e.\ $x_0=1$, $x_1 = x+1$, $x_2 = y+1$): we get the polynomial 
\[
1 + (x+1)(y+1)(-3+x+1+y+1) = x^2 + xy  +y^2 + x^2 y + x y^2,
\]
whose quadratic part $x^2 + xy  +y^2$ is non-degenerate. Therefore $F_{-3}$ is a nodal irreducible cubic curve, i.e.\ a fibre of type $I_1$.
In a similar way one can prove that $F_{-3 \zeta}$ and $F_{-3 \zeta^2}$ are fibres of type $I_1$.

To sum up, the fibration $Y \to \PP^1$ has one $I_9$ fibre and $3$ $I_1$ fibres.

 We now apply the above analysis to the study of the Mordell--Weil group and lattice:
\begin{remark}
 Let $\sigma_0$, $\sigma_3$, $\sigma_6$ be the three sections, intersecting the $0$th, third, and sixth  component of $I_9$ respectively. Let  $\sigma_0$ be the $0$-section.  Following  Example~\ref{ex:contr}  we find:

\begin{align*} 
	\contr_{I_9} (\sigma_3) &=\frac{3(9-3)}{9}=2\\
				\contr_{I_9} (\sigma_6) &= \frac{6(9-6)}{9}=2 
				\end{align*}
The Shioda  homomorphism and the height pairing in  Corollary~\ref{cor:HeighPP}
give:
\[
\langle \sigma_3, \sigma_3 \rangle = \langle \sigma_6, \sigma_6 \rangle = 0,
\]
hence the sections are torsion, by Theorem~\ref{th:HeightP}.  Let $n$ be the order of  $\MW(Y)_{\tors}$. Then $n^2$ must divide the determinant of the trivial sublattice $T$,  where $ \det T= \det (A_8)=9$ (see for example \cite[Proposition~6.3.1]{schuett_shioda_book}). Then $\MW(Y)_{\tors} \simeq \mathbb Z /3 \mathbb Z$. 
	
\end{remark}

%%%%%%%%%%%

\subsection{4a}

Let $P=P_{4a}$.  	
%The normal fan of $P$ is the face fan of $P_{8a}$.  
%Thus the surface $Y_P$ is $(\PP^1 \times \PP^1)/\mu_2(0,1,0,1)$.
%integer grading, quotient grading
%The surface $Y_P$ has $4$ $A_1$ singularities (at the origins of its four affine charts). 
Label the lattice points of $P$ as follows: $x_0=(0,0)$, $x_1=(1,0)$, $x_2=(0,1)$, $x_3=(-1,0)$, $x_4=(0,-1)$.
Then the toric surface $Y_P$ is given by:
\begin{equation*}
	\left\{  x_0^2-x_1x_3=x_0^2-x_2x_4=0 \right\} \subset \PP^4
\end{equation*} 
It has $4$ $A_1$ singularities.
The toric boundary $D_P \subset Y_P$ is   $Y_P \cap \{ x_0=0 \} \subset \PP^4$.
The section $\bfone_P$ is $x_0$ and the section $f_P$ is $x_1+x_2+x_3+x_4$,   thus the base locus of $\pencilP$ is given by the four points
$[0:0:0:1:-1]$, $[0:0:1:-1:0]$,  $[0:1:0:0:-1]$, 
$[0:1:-1:0:0]$.

The minimal resolution $\minresYP$ contains $4$  $(-2)$-curves.
%its fan is obtained by adding the four vectors....
The strict transforms in $\minresYP$ of the $4$ components of $D_P$ are $(-1)$-curves. 

The exceptional curves of $Y \to \overline{Y}_P$ are $4$ $(-1)$ curves. We draw them in blue in Figure~\ref{fig:4a}. The divisor $D=f^{-1}(\infty) \subset Y$ is a $I_8$ fibre. 
The curve $F_\lambda=\lambda \textbf{1}_P+ f_P$ is cut out of $Y_P$ by the equation $\lambda x_0+x_1 +x_2 +x_3+x_4$, thus it is isomorphic to the curve:
\begin{equation}
	\left\{ x_0^2-x_1x_3=x_0^2+x_2(\lambda x_0+x_1+x_2+x_3)=0 \right\} \subset \PP^3	
\end{equation}
%In the chart $(x_3 \neq 0)$, %one has $x_0^2=x_1$.
%Thus 
The affine patch $F_\lambda \cap \{ x_3 \neq 0 \}$ is isomorphic to the affine curve $C_\lambda$:
\begin{equation} \label{eq:P4a-C-lambda}
	\left( x^2+y(\lambda x+x^2+y+1)=0\right)\subset  \AA^2 	
\end{equation}
The curve $C_\lambda$ is singular at a point $p$ if and only if $p$ satisfies the $2$ equations:
\begin{equation} \label{eq:system}
	\begin{split}
		&2x(1+y)+\lambda y=0,\\
		& x^2+\lambda x+2y+1=0.
	\end{split}
\end{equation}
By the second equation of \eqref{eq:system}, it must be
\begin{equation}
	\label{eq:y-4a}
	y=-\frac{1}{2}(x^2+\lambda x+1)
\end{equation}
The resultant of the two polynomials in $x$ that one gets by replacing $y$ with \eqref{eq:y-4a} in \eqref{eq:P4a-C-lambda} and in the first equation of \eqref{eq:system}
is $-\tfrac{1}{64}\lambda^2(\lambda-4)(\lambda+4)$.
We have that $C_0=((1+y)(x^2+y)=0) \subset \AA^2$. The two components of $C_0$ intersect transversely at 
$(\pm 1, -1)$. 
Thus $f^{-1}(0) \simeq F_0$ is a $I_2$ fibre.
The curve $C_{\pm4}$ has a node at $(\mp 1,1)$.

The sum of the topological Euler characteristic of the singular fibres  different from the $I_8$ fibre must be 4. It follows that $f$ has singular fibres of type $I_8, I_2, I_1, I_1$.

\begin{figure}[ht!]
	\tikzset{every picture/.style={line width=0.5pt}} %set default line width to 0.75pt        
	\begin{tikzpicture}[x=0.75pt,y=0.75pt,yscale=-1,xscale=1, scale=0.7]
		 \path (200,100); %set diagram left start at 0, and has height of 419
		
		%Straight Lines [id:da9086871419293683] 
		\draw    (220.5,160) -- (270.5,211.75) ;
		%Straight Lines [id:da524262486136529] 
		\draw    (360,160.75) -- (310.5,212) ;
		%Straight Lines [id:da49202686394818995] 
		\draw    (309.5,70.25) -- (359.5,119.75) ;
		%Straight Lines [id:da9540820682354205] 
		\draw    (270,70.25) -- (221,119) ;
		%Curve Lines [id:da8309121605680572] 
		\draw [color={rgb, 255:red, 208; green, 2; blue, 2 }  ,draw opacity=1 ]   (253.5,212.5) .. controls (293.5,182.5) and (305.5,197.25) .. (330,211.75) ;
		%Curve Lines [id:da3435945837747575] 
		\draw [color={rgb, 255:red, 208; green, 2; blue, 2 }  ,draw opacity=1 ]   (361.5,101.25) .. controls (350,115.75) and (332.5,160.25) .. (362,172.75) ;
		%Curve Lines [id:da5949694925256337] 
		\draw [color={rgb, 255:red, 208; green, 2; blue, 2 }  ,draw opacity=1 ]   (256,65) .. controls (278,88.75) and (290,95.25) .. (330,65.25) ;
		%Curve Lines [id:da5348117907659452] 
		\draw [color={rgb, 255:red, 208; green, 2; blue, 2 }  ,draw opacity=1 ]   (213,102.75) .. controls (242,121.25) and (235.5,146.25) .. (222,173.75) ;
		%Curve Lines [id:da9720248254465724] 
		\draw [color={rgb, 255:red, 2; green, 46; blue, 208 }  ,draw opacity=1 ]   (225,83.25) .. controls (253,87.75) and (255.5,105.25) .. (258,110.25) ;
		%Curve Lines [id:da06169968195950826] 
		\draw [color={rgb, 255:red, 2; green, 46; blue, 208 }  ,draw opacity=1 ]   (316,169.25) .. controls (338,178.75) and (345,187.75) .. (347.5,201.75) ;
		%Curve Lines [id:da07764554174249516] 
		\draw [color={rgb, 255:red, 2; green, 46; blue, 208 }  ,draw opacity=1 ]   (229,188.5) .. controls (233,178.75) and (250,166.75) .. (261.5,166.75) ;
		%Curve Lines [id:da23620178754029175] 
		\draw [color={rgb, 255:red, 2; green, 46; blue, 208 }  ,draw opacity=1 ]   (308,104) .. controls (331,85.75) and (343,90.25) .. (356,95.25) ;
	\end{tikzpicture}
	\caption{\label{fig:4a} The surface $Y$ when $P=P_{4a}$.}	
\end{figure}
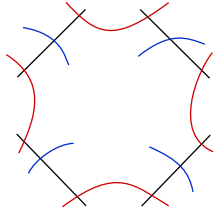

\smallskip

 We now apply the above analysis to the study of the Mordell--Weil group and lattice:
\begin{remark}
	Let $\sigma_0$, $\sigma_2$, $\sigma_4, \sigma_6$ be the 4 sections described above, with  $\sigma_j$ intersecting the $j$th component of $I_8$. Let  $\sigma_0$ be the $0$-section.  
	Tate--Shioda  formula implies that $\rank (\MW(Y))=0$, hence the sections are torsion, by Theorem~\ref{th:HeightP}.  Let $n$ be the order of  $\MW(Y)_{\tors}$. Since  $n^2$ must divide the determinant of the trivial sublattice $T$,  we have  that $n^2$ must divide $16= 2 \times 8 $ and $\MW(Y)=\MW(Y)_{\tors} \simeq \mathbb Z /4 \mathbb Z$ (Proposition~\ref{MirandaAquila}). 
	
\end{remark}

%%%%%%%%%%%

\subsection{4b}
Set $P = P_{4b}$.
Label the lattice points of $P$ as follows: $x_0 = (0,0)$, $x_1 = (1,0)$, $x_2 = (0,1)$, $x_3 = (-1,1)$, $x_4 = (-1,0)$.
This implies that $Y_P$ is the complete intersection of the quadrics $x_0^2 - x_2 x_4 = 0$ and $x_1 x_3 - x_0 x_2 = 0$ in $\PP^4$.
The divisor $F_\lambda \subset Y_P$ is given by the equation $\lambda x_0 + x_1 + x_2 + x_3 + x_4 = 0$.
Therefore $F_\lambda$ is isomorphic to the intersection of the quadrics $x_0^2 + x_2 (\lambda x_0 + x_1 + x_2 + x_3) = 0$ and $x_1 x_3 - x_0 x_2 = 0$ in $\PP^3$.
We need to understand when $F_\lambda$ is singular.

We work in the affine chart $\{ x_2 \neq 0 \}$. We use the coordinates $x = x_1/x_2$, $y = x_3/x_2$. Hence $x_0 = xy x_2$ and $x_4 = x^2 y^2 x_2$.
This implies that $F_\lambda \cap \{ x_2 \neq 0 \}$ is isomorphic to the affine curve
\[
C_\lambda = \{ 1+x+y + \lambda xy + x^2 y^2 =0 \} \subset \CC^2
\]

We analyse when $C_\lambda$ is singular. The singular points of $C_\lambda$ must satisfy the equations
\[
\left\{
\begin{array}{l}
1+x+y + \lambda xy + x^2 y^2  = 0 \\
1 + \lambda y + 2 x y^2 = 0 \\
1 + \lambda x + 2 x^2 y = 0
\end{array}
 \right.
\]
which are equivalent to 
\[
\left\{
\begin{array}{l}
	1+x+y + \lambda xy + x^2 y^2  = 0 \\
	1 + \lambda y + 2 x y^2 = 0 \\
	(\lambda + 2xy)(x-y) = 0
\end{array}
\right.
.
\]
We have two cases: (i) $xy = -\lambda / 2$ and (ii) $x=y$.
\begin{itemize}
	\item[(i)]  If $xy = -\lambda /2$, then the equations become
	\[
	\left\{
	\begin{array}{l}
		1+x+y - \frac{1}{4} \lambda^2 = 0 \\
		1 + 2 \lambda y = 0 \\
		1 + 2 \lambda x = 0
	\end{array}
	\right.
	;
	\]
	subtracting the second equation from the third one we get $\lambda (x-y) = 0$. It is quite clear $\lambda$ cannot be zero, otherwise the equations are impossible. Therefore we must have $x=y$, which is a case treated in (ii).
	\item[(ii)] If $x=y$, then the equations become
\[
\left\{
\begin{array}{l}
	1+ 2x + \lambda x^2 + x^4  = 0 \\
	1 + \lambda x + 2 x^3 = 0 \\
 x= y
\end{array}
\right.
.
\]
The resultant of the first two polynomials is $g = \lambda^4 - \lambda^3 - 8 \lambda^2 +36 \lambda - 11$, whose discriminant is $-22665187 \neq 0$. Therefore $g$ has $4$ distinct roots in $\CC$.
\end{itemize}
Hence there are exactly $4$ values of $\lambda \in \CC$ such that $C_\lambda$ is a singular curve.
Since $C_\lambda$ is an open subset of $F_\lambda$, we have found at least $4$ values of $\lambda \in \CC$ such that $F_\lambda$ (and consequently $f^{-1}(\lambda))$ is a singular curve.

We already know that there is a $I_8$ fibre. By comparing the topological Euler characteristic we have that there are no more than $4$ further singular fibres. Hence the $4$ singular fibres we have found must have topological Euler characteristic equal to $1$, thus they must be of type $I_1$.

To sum up, the singular fibres of $f$ are one of type $I_8$ and $4$ of type $I_1$.

 We now apply the above analysis to the study of the Mordell--Weil group and lattice:
\begin{remark}
	Let $\sigma_0$, $\sigma_2$, $\sigma_5, \sigma_7$ be the four sections described above,  with $\sigma_j$ intersecting the $j$th  component of $I_8$. Let  $\sigma_0$ be the $0$-section.  Following  Example~\ref{ex:contr}  we find:
	
	\begin{align*} 
		\contr_{I_8} (\sigma_2) &=\frac{2(8-2)}{8}=\frac{3}{2}\\
		\contr_{I_8} (\sigma_5) &=\frac{5(8-5)}{8}=\frac{15}{8}\\
		\contr_{I_8} (\sigma_7) &=\frac{7(8-7)}{8}=\frac{7}{8}
	\end{align*}
	 and:
	 	\begin{align*} 
	 	\contr_{I_8} (\sigma_2, \sigma _5) &=\frac{2(8-5)}{8}=\frac{3}{4}\\
	 	\contr_{I_8} (\sigma_5, \sigma_7) &=\frac{5(8-7)}{8}=\frac{5}{8}\\
	 	\contr_{I_8} (\sigma_7, \sigma_2) &=\frac{2(8-7)}{8}=\frac{2}{8}.
	 \end{align*}
 The sections $\sigma_2$, $\sigma_5$, $\sigma_7$ are not torsion, by Theorem~\ref{th:HeightP}. The  Shioda  homomorphism and the height pairing in  Corollary \ref{cor:HeighPP}
	give the height matrix:
\begin{equation}
\frac{1}{8}
	\begin{bmatrix}
	4 & 2 &6\\
	2& 1 & 3\\
	6 & 3 & 9
	\end{bmatrix}.
\end{equation}
The matrix has rank $1$, as predicted by the Tate--Shioda formula.
Moreover, since $8$ appears as a denominator in $\langle \sigma_5, \sigma_5 \rangle = \frac{1}{8}$, it follows from the classification of Mordell--Weil lattices of rational elliptic surfaces (see \cite[Theorem~8.8]{schuett_shioda_book}) that  the Mordell--Weil lattice with the height pairing must be $\langle \frac{1}{8} \rangle$, hence there is no torsion in $\MW(Y)$.
\end{remark}

%\textcolor{red}{With the same analysis we can also explicitly compute the Mordell--Weil group in the other examples.}

\subsection{4c}
Let $P=P_{4c}$. The normal fan of $P$ is the face fan of $P_{8c}$. 
The toric surface $Y_P$ is the quotient  $\PP(1,1,2)/_{{\ZZ/2\ZZ} (0,1,1)}$. It is isomorphic to:
\begin{equation}
	\left\{ x_2^2-x_1x_3	=x_0^2-x_2x_4=0 \right\} \subset \PP^4
\end{equation}
The surface $Y_P$ has two $A_1$ singularities, at $[0 : 0 : 0 : 1 : 0]$ and at $[0 : 1 : 0 : 0 : 0]$, and one $A_3$ singularity, at $[0 : 0 : 0 : 0 : 1]$.
The toric boundary $D_P \subset Y_P$ is   $Y_P \cap \{ x_0=0 \} \subset \PP^4$.
The sections $\textbf{1}_P$, $f_P$ are  $x_0$, $x_1+2x_2+x_3+x_4$,   thus the base locus of $\pencilP$ is given by $4$ points: $[0:0:0:1:-1]$, $[0:1:0:0:-1]$, $[0:1:-1:1:0]$ and an infinitely near basepoint of the first order at $[0:1:-1:1:0]$.

The minimal resolution $\overline{Y}_P$ has $5$  $(-2)$-curves.
%its fan is obtained by adding the four vectors....
The strict transforms in $\overline{Y}_P$ of the three components of $D_P$ are two $(-1)$-curves, and a $0$-curve.

The exceptional curves of $Y \to \overline{Y}_P$ are $3$ $(-1)$ curves, and a $(-2)$-curve. In Figure~\ref{fig:4c}, we draw the $(-1)$-curves in blue, the $(-2)$-curve in cyan.
The divisor $D=f^{-1}(\infty) \subset Y$ is a $I_8$ fibre.
The curve $F_\lambda= \lambda \textbf{1}_P+ f_P$ is cut out of $Y_P$ by the equation $\lambda x_0+x_1 +2x_2 +x_3+x_4$, thus it is isomorphic to the curve
\begin{equation}
	\left\{ x_2^2-x_1x_3=x_0^2+x_2(\lambda x_0+x_1+2x_2+x_3)=0\right\} \subset \PP^3.
\end{equation}
The curve $F_0$ has a node at $[0:1:-1:1:0]$, thus $f^{-1}(0)$ is a $I_2$ fibre (the union of the purple curve and the cyan curve in Figure \ref{fig:4c}). The curve $F_{\pm 4}$ has a node at $[\mp 2 : 1 : 1 : 1]$, thus $f^{-1}(\pm4) \simeq F_{\pm4}$  is a $I_1$ fibre.

The sum of the topological Euler characteristic of the singular fibres  different from the $I_8$ fibre must be $4$.
It follows that $f$ has singular fibres of type $I_8$, $I_2$, $I_1$, $I_1$. 

%\begin{remark} The rational elliptic surfaces of cases $4a$ and $4c$ are the same. According to mirror symmetry, this must be the case. Indeed, the toric 
%surface $X_{Pc}$ is $\PP(1,1,2)$, that is, a quadric cone in $\PP^3$, thus it is a toric degeneration of $\PP^1 \times \PP^1=X_{Pa}$. \end{remark}

\begin{figure}[ht!]
\tikzset{every picture/.style={line width=0.5pt}} %set default line width to 0.75pt        

\begin{tikzpicture}[x=0.75pt,y=0.75pt,yscale=-1,xscale=1, scale=0.9]
	%uncomment if require: \path (0,300); %set diagram left start at 0, and has height of 300
	
	%Straight Lines [id:da16808418808944925] 
	\draw    (250.5,140.25) -- (282.5,206.25) ;
	%Straight Lines [id:da5225099810828433] 
	\draw    (390,139.25) -- (358,205.75) ;
	%Straight Lines [id:da9360715868608402] 
	\draw    (270,101) -- (371,100.75) ;
	%Curve Lines [id:da881651468242037] 
	\draw [color={rgb, 255:red, 208; green, 13; blue, 2 }  ,draw opacity=1 ]   (330,219) .. controls (348,198.75) and (364,197.25) .. (380.5,207.75) ;
	%Curve Lines [id:da5774559878287087] 
	\draw [color={rgb, 255:red, 208; green, 13; blue, 2 }  ,draw opacity=1 ]   (258.5,205) .. controls (283.5,193.75) and (302.5,212.75) .. (310,220.25) ;
	%Curve Lines [id:da2572913333677215] 
	\draw [color={rgb, 255:red, 208; green, 13; blue, 2 }  ,draw opacity=1 ]   (296.5,218.25) .. controls (315,201.5) and (340,216) .. (343.5,220) ;
	%Curve Lines [id:da7896913206172927] 
	\draw [color={rgb, 255:red, 208; green, 13; blue, 2 }  ,draw opacity=1 ]   (278,95.75) .. controls (278.5,111.75) and (265,142.75) .. (241,150.25) ;
	%Curve Lines [id:da4308094743694979] 
	\draw [color={rgb, 255:red, 208; green, 13; blue, 2 }  ,draw opacity=1 ]   (362.6,93.8) .. controls (361.8,127.4) and (374.2,138.2) .. (399.8,150.2) ;
	%Curve Lines [id:da5547989681598283] 
	\draw [color={rgb, 255:red, 2; green, 10; blue, 208 }  ,draw opacity=1 ]   (340.1,179.6) .. controls (360.6,172.35) and (364.2,171.8) .. (390.1,179.85) ;
	%Curve Lines [id:da418281942386205] 
	\draw [color={rgb, 255:red, 2; green, 10; blue, 208 }  ,draw opacity=1 ]   (255,185.5) .. controls (267,176.2) and (276.2,171.8) .. (293,175.25) ;
	%Curve Lines [id:da6754381355726468] 
	\draw [color={rgb, 255:red, 2; green, 10; blue, 208 }  ,draw opacity=1 ]   (313.4,83.4) .. controls (323.8,100.2) and (324.3,119.6) .. (321.8,124.6) ;
	%Curve Lines [id:da003869970880019169] 
	\draw [color={rgb, 255:red, 80; green, 227; blue, 194 }  ,draw opacity=1 ]   (282.5,146.25) .. controls (287,137.25) and (301,121.25) .. (334.5,116.5) ;
	%Curve Lines [id:da8038105470902404] 
	\draw [color={rgb, 255:red, 208; green, 2; blue, 156 }  ,draw opacity=1 ]   (288.6,182.6) .. controls (284.6,109.8) and (295,83.8) .. (351.8,184.2) ;

\end{tikzpicture}
\caption{\label{fig:4c}The surface $Y$ when $P=P_{4c}$.}
\end{figure}
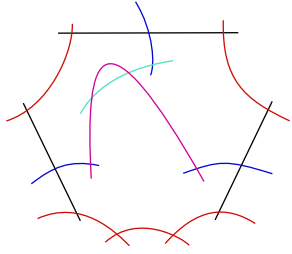

\subsection{5a}

Let $P=P_{5a}$.
The normal fan of $P$ is the face fan of $P_{7a}$. Then the toric surface $Y_P$ is the weighted blow up of $\PP^2$ at $[0:1:0]$ and $[0:0:1]$ with weights $(1,2)$. It has two $A_1$ singularities. The base locus of the pencil $\pencilP$ is formed by $5$ points. 

The minimal resolution $\overline{Y}_P$ contains two $(-2)$-curves. The strict transforms of the component of $D_P$ in $\overline{Y}_P$ are all $(-1)$-curves. 

The exceptional curves of $Y \to \overline{Y}_P$ are five $(-1)$-curves (in blue in Figure~\ref{fig:5a}). The divisor $D=f^{-1}(\infty)$ is a $I_7$ fibre. 
The affine curve $C_\lambda=F_\lambda \cap U_\sigma$, where $\sigma$ is the cone of the normal fan of $P$ spanned by $e_1$ and $-e_2$, is given by
\begin{equation} \label{eq:poly-5a}
	\left\{ \lambda xy+x^2y+x+1+y+y^2x =0\right\} \subset \AA^2.
\end{equation}
The curve $C_\lambda$ is singular at a point $p$ if and only if $p$ satisfies the two equations:
\begin{equation}
	\label{eq:system-5a}
	\begin{split}
		&\lambda y+ 2xy +1+  y^2 =0,\\
		& \lambda x+ x^2 + 1+ 2xy =0.
	\end{split}
\end{equation}
By the first equation above we obtain
\begin{equation} \label{eq:x-5a}
	x=-\frac{\lambda y +1+  y^2}{2 y}
\end{equation}
The resultant of the two polynomials in $y$ that one gets by replacing $x$ with \eqref{eq:x-5a} in \eqref{eq:poly-5a} and the second equation of \eqref{eq:system-5a} and clearing the denominators is
\[
(\lambda - 1)^2 (\lambda^3-\lambda^2-18\lambda+43).
\]
The curve $C_1$ is $\{ (x+y+1)(xy+1)=0 \} \subset \AA^2$. The two components of $C_1$ intersect transversely.
% Then $f^{-1}(1) \simeq F_1$  is a $I_2$ fibre.

The sum of the topological Euler characteristic of the singular fibres different from the $I_7$ fibre is $5$.
It follows that	
$f$ has fibres of type $I_7, I_2, I_1, I_1,I_1$.

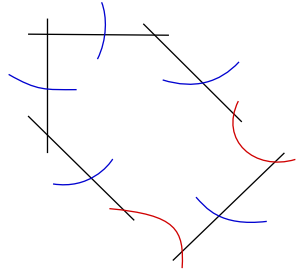
\begin{figure}[ht!]
	\tikzset{every picture/.style={line width=0.5pt}} %set default line width to 0.75pt        
	
	\begin{tikzpicture}[x=0.75pt,y=0.75pt,yscale=-1,xscale=1, scale=0.7]
		%uncomment if require: \path (0,384); %set diagram left start at 0, and has height of 384
		
		%Straight Lines [id:da05505660085085373] 
		\draw    (206,118.5) -- (282.5,193.75) ;
		%Straight Lines [id:da9989396810556215] 
		\draw    (289,52) -- (360,122.75) ;
		%Straight Lines [id:da8506705205095914] 
		\draw    (220,48.25) -- (220,145.25) ;
		%Straight Lines [id:da9966773503398446] 
		\draw    (206,59.5) -- (307.5,60.25) ;
		%Straight Lines [id:da9132210064655716] 
		\draw    (390.7,145.1) -- (310.3,222.7) ;
		%Curve Lines [id:da027936882368347016] 
		\draw [color={rgb, 255:red, 208; green, 2; blue, 2 }  ,draw opacity=1 ]   (357.5,107.75) .. controls (343.5,136.25) and (369.4,158.6) .. (398.6,149.8) ;
		%Curve Lines [id:da9384943532271147] 
		\draw [color={rgb, 255:red, 208; green, 2; blue, 2 }  ,draw opacity=1 ]   (264.6,185.3) .. controls (313,189.8) and (318.6,200.2) .. (317,228.2) ;
		%Curve Lines [id:da551695916745367] 
		\draw [color={rgb, 255:red, 2; green, 3; blue, 208 }  ,draw opacity=1 ]   (327,177) .. controls (334,184.5) and (344,198.5) .. (378,194.5) ;
		%Curve Lines [id:da1251749792126512] 
		\draw [color={rgb, 255:red, 2; green, 3; blue, 208 }  ,draw opacity=1 ]   (303,92.5) .. controls (326,99.5) and (344,94.5) .. (358,79.5) ;
		%Curve Lines [id:da1545758334311732] 
		\draw [color={rgb, 255:red, 2; green, 3; blue, 208 }  ,draw opacity=1 ]   (259,36.5) .. controls (263,46.5) and (263,63.5) .. (256,77.5) ;
		%Curve Lines [id:da6602719970237345] 
		\draw [color={rgb, 255:red, 2; green, 3; blue, 208 }  ,draw opacity=1 ]   (224,167.5) .. controls (250,171.5) and (264,154.5) .. (267,149.5) ;
		%Curve Lines [id:da16715835183533245] 
		\draw [color={rgb, 255:red, 2; green, 3; blue, 208 }  ,draw opacity=1 ]   (192,88.5) .. controls (213,100.5) and (218,99.5) .. (241,99.5) ;

	\end{tikzpicture}
	
	\caption{ \label{fig:5a} The surface $Y$ when $P=P_{5a}$.}
\end{figure}

\subsection{5b}

Let $P=P_{5b}$. The normal fan of $P$ is the face fan of $P_{7b}$. Then $Y_P$ is the weighted blow up of $\PP(1,1,2)$ at $[0:1:0]$ with weights $(1,3)$. It has one $A_1$ singularity and one $A_2$ singularity. The base locus of the pencil $\pencilP$ is given by $5$ points, one of which is infinitely near of the first order. 

The minimal resolution $\overline{Y}_P$ of $Y_P$ contains three $(-2)$-curves. The strict transform in $Y_P$ of the four components of $D_P$ are three $(-1)$-curves and a $0$-curve. 

The exceptional curves of $Y \to \overline{Y}_P$ are four $(-1)$-curves and a $(-2)$-curve. The divisor $D=f^{-1}(\infty)$ is a $I_7$ fibre. 

The affine curve $C_\lambda=F_\lambda \cap U_\sigma$, where $\sigma$ is the cone of the normal fan of $P$ spanned by $e_1$ and $-e_2$, is given by
\begin{equation} \label{eq:poly-5b}
	\left\{ \lambda xy+y+xy^2+1+2x+x^2=0\right\} \subset \AA^2.
\end{equation}
The curve $C_1$ has a node at the point $(-1,0)$.  By \eqref{eq:poly-5b} this point is where there is the infinitely near base point of $\pencilP$ of the first order.
Then $f^{-1}(1)$ is a $I_2$ fibre (the union of the purple and the cyan curve in Figure~\ref{fig:5b}). 
The curve $C_\lambda$, where $\lambda$ is a root of $\lambda^3 - \lambda^2 - 18\lambda + 43$, is nodal.

The sum of the topological Euler characteristic of the singular fibres  different from the $I_7$ fibre must be 5.
It follows that $f$ has singular fibres of type $I_7$, $I_2$, $I_1$, $I_1$, $I_1$.  

\begin{figure}[ht!]

\tikzset{every picture/.style={line width=0.5pt}} %set default line width to 0.75pt        

\begin{tikzpicture}[x=0.75pt,y=0.75pt,yscale=-1,xscale=1]
%uncomment if require: \path (0,377); %set diagram left start at 0, and has height of 377

%Straight Lines [id:da8853998131543402] 
\draw    (400.6,110.8) -- (361.4,190.4) ;
%Straight Lines [id:da19921935164169324] 
\draw    (274.67,73.83) -- (274.33,106.8) -- (274.33,140.55) ;
%Straight Lines [id:da33601726666392573] 
\draw    (270,132.5) -- (314.2,175.2) ;
%Straight Lines [id:da3641690637262629] 
\draw    (263.03,80.27) -- (373,80.83) ;
%Curve Lines [id:da9384207151303889] 
\draw [color={rgb, 255:red, 208; green, 27; blue, 2 }  ,draw opacity=1 ]   (291.8,164) .. controls (314.6,158) and (334.6,173.2) .. (338.2,180.4) ;
%Curve Lines [id:da6268285052733338] 
\draw [color={rgb, 255:red, 208; green, 27; blue, 2 }  ,draw opacity=1 ]   (365,75.83) .. controls (369,86.8) and (396.6,115.2) .. (407,117.6) ;
%Curve Lines [id:da3651557972311592] 
\draw [color={rgb, 255:red, 2; green, 10; blue, 208 }  ,draw opacity=1 ]   (351,146.83) .. controls (358.33,145.83) and (378.6,147.6) .. (385,157.2) ;
%Curve Lines [id:da10520261894313876] 
\draw [color={rgb, 255:red, 2; green, 10; blue, 208 }  ,draw opacity=1 ]   (276.8,161.6) .. controls (281,148.8) and (288.2,139.6) .. (301,140.8) ;
%Curve Lines [id:da833014237123151] 
\draw [color={rgb, 255:red, 2; green, 10; blue, 208 }  ,draw opacity=1 ]   (269,109.5) .. controls (280.67,110.5) and (287.67,111.17) .. (299.4,117.47) ;
%Curve Lines [id:da33270054239866476] 
\draw [color={rgb, 255:red, 2; green, 10; blue, 208 }  ,draw opacity=1 ]   (335.67,70.83) .. controls (343,86.17) and (338.4,93.97) .. (336,103.17) ;
%Curve Lines [id:da6455533090890653] 
\draw [color={rgb, 255:red, 80; green, 227; blue, 194 }  ,draw opacity=1 ]   (344.33,96.83) .. controls (327.67,96.83) and (298.33,99.83) .. (287,105.17) ;
%Curve Lines [id:da6595623063597625] 
\draw [color={rgb, 255:red, 208; green, 2; blue, 156 }  ,draw opacity=1 ]   (358.2,153.2) .. controls (315.4,65.2) and (284.67,75.83) .. (296.67,148.83) ;
%Curve Lines [id:da2109104580026101] 
\draw [color={rgb, 255:red, 208; green, 24; blue, 2 }  ,draw opacity=1 ]   (324.8,175.4) .. controls (353.8,170.8) and (369.6,183.4) .. (378.6,188) ;

\end{tikzpicture}

	\caption{\label{fig:5b} The surface $Y$ when $P=P_{5b}$.}
\end{figure}
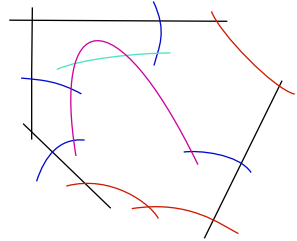

\subsection{6a}
Set $P = P_{6a}$. Then $Y_P = \minresYP$ is the smooth del Pezzo surface of degree $6$, i.e.\ the blow-up of $\PP^2$ at $3$ distinct points.
The toric boundary of $Y_P$ is made up of $6$ $(-1)$-curves, which becomes an $I_6$ fibre in $Y$.

Let us use the affine chart $U$ of $Y_P$ isomorphic to $\AA^2$ associated to the cone with rays $e_2$ and $-e_1$.
Then $F_\lambda \cap U$ is isomorphic to
\[
C_\lambda = \left\{  1+x+y+ \lambda xy + x^2 y + x y^2 + x^2 y^2 = 0  \right\} \subset \AA^2.
\]

$C_3$ is reducible with equation $(1+x+xy)(1+y+xy) = 0$. The two irreducible components of $C_3$ intersect transversally away from the toric boundary $D_P$. Therefore $f^{-1}(3)$ is an $I_2$ fibre.

$C_2$ is reducible with equation $(1+x)(1+y)(1+xy) = 0$. Each pair of the three irreducible components of $C_2$ intersects transversally in one distinct point. Therefore $f^{-1}(2)$ is an $I_3$ fibre.

With a topological Euler characteristic count, we see that there must be also an $I_1$ fibre.

\begin{figure}[ht!]
	\centering
	\includegraphics[width=0.65\textwidth]{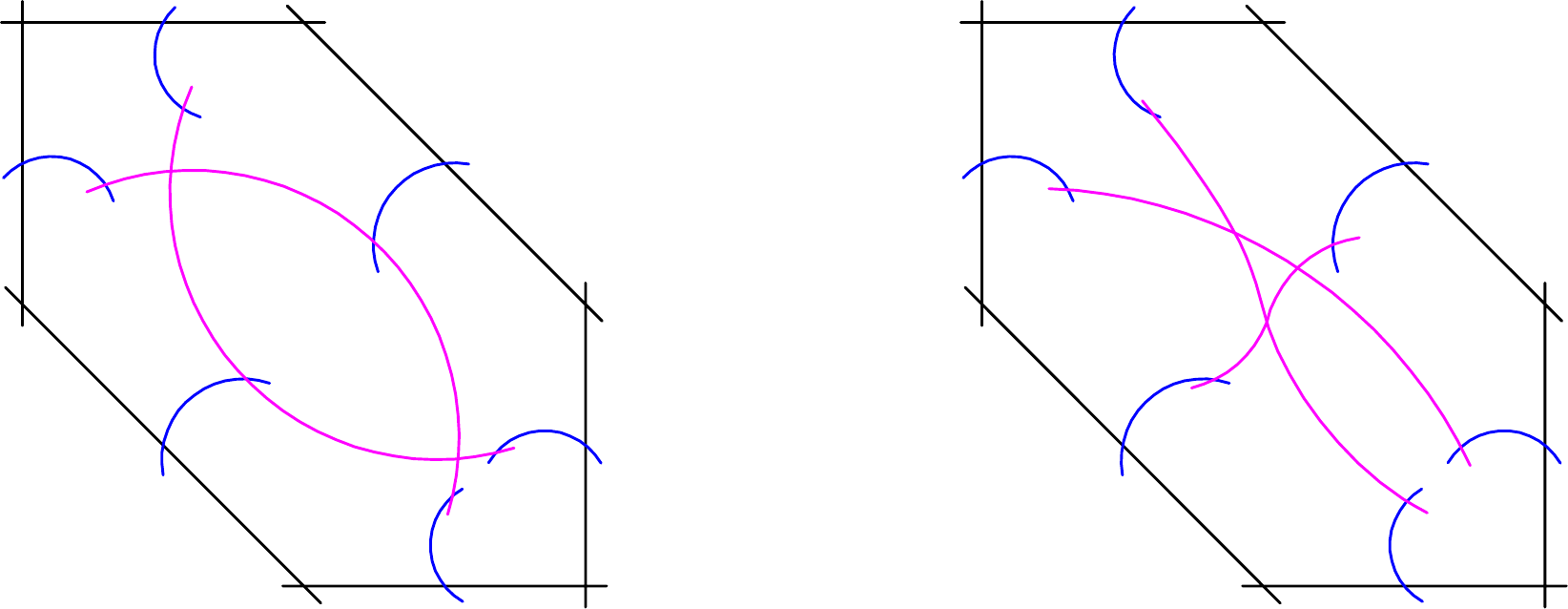}
	\caption{The surface $Y$ when $P = P_{6a}$.}
	\label{fig:P6a}
\end{figure}

\subsection{6b} Let $P=P_{6b}$. The toric surface $Y_P$ is the weighted blow-up of $\PP(1,1,2)$ at $[0:1:0]$ with weights $(1,2)$. It has two $A_1$ singularities. The base locus of the pencil $\pencilP$ is made up of $6$ points, two of which are infinitely near of the first order.

The minimal resolution $\overline{Y}_P$  of $Y_P$ contains two $(-2)$-curves. The strict transforms of the components of $D_P$ in $\overline{Y}_P$ are two $(-1)$-curves, and two $0$-curves. 

The exceptional curves of $Y \to  \overline{Y}_P$ are four $(-1)$-curves and two $(-2)$-curves. In Figure~\ref{fig:6b} we draw the $(-1)$-curves in blue, the $(-2)$-curves in cyan. The divisor $D=f^{-1}(\infty)$ is a $I_6$ fibre.  

The affine curve $C_\lambda=F_\lambda \cap U_\sigma$, where $\sigma$ is the cone of the normal fan of $P$ spanned by $e_1$ and  $-e_2$, is given by
\begin{equation}
	\label{eq;poly-6b}	
	\left\{ \lambda xy+2y+1+2x+x^2+y^2+y^2x=0 \right\} \subset \AA^2.
\end{equation}

For $\lambda  = 2$ we get
\begin{equation*}
	C_2 = \left\{ (x+1)(x+y^2+2y+1)=0 \right\} \subset \AA^2;
\end{equation*}
the component $\{x+y^2+2y+1=0\}$ of $C_2$ meets the other component $\{x+1=0\}$ transversely at $(-1,0)$ and $(-1,-2)$.
By \eqref{eq;poly-6b} the point $(-1,0)$ is where we have an infinitely near base point of $\pencilP$ of the first order.
Then $f^{-1}(2)$ is a $I_3$ fibre (the triangle formed by the two purple curves and one of the cyan curves in Figure~\ref{fig:6b}).

The curve $C_3$ has a node at $(0,-1)$.  By \eqref{eq;poly-6b} the point $(0,-1)$ is where we have the other infinitely near base point of $\pencilP$ of the first order.  Then $f^{-1}(3)$ is a $I_2$ fibre (the union of the yellow curve and the other cyan curve in Figure~\ref{fig:6b}). 

The sum of the topological Euler characteristic of the singular fibres different from the $I_6$ fibre must be $6$. It follows that $f$ has fibres of type $I_6, I_3, I_2, I_1$.  

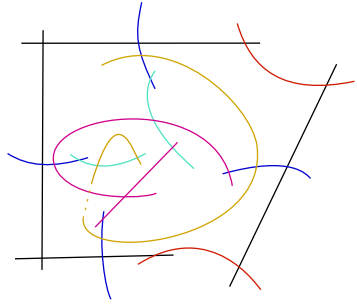
\begin{figure}[ht!]
\tikzset{every picture/.style={line width=0.5pt}} %set default line width to 0.75pt        

\begin{tikzpicture}[x=0.75pt,y=0.75pt,yscale=-1,xscale=1, scale=1.2]
	%uncomment if require: \path (0,300); %set diagram left start at 0, and has height of 300
	
	%Straight Lines [id:da741784891326592] 
	\draw    (241.14,114.34) -- (240.67,215.17) ;
	%Straight Lines [id:da8350875385642093] 
	\draw    (232,119.83) -- (332.37,119.79) ;
	%Straight Lines [id:da9440915306273828] 
	\draw    (229.33,210.5) -- (295.95,208.93) ;
	%Straight Lines [id:da10566849531955058] 
	\draw    (319.51,222.21) -- (364.5,128.65) ;
	%Curve Lines [id:da15371550790185207] 
	\draw [color={rgb, 255:red, 208; green, 27; blue, 2 }  ,draw opacity=1 ]   (322.73,111.48) .. controls (328.62,131.41) and (342.54,141.93) .. (372,135.84) ;
	%Curve Lines [id:da6320888334570828] 
	\draw [color={rgb, 255:red, 208; green, 27; blue, 2 }  ,draw opacity=1 ]   (281,212.83) .. controls (312.33,193.17) and (331.06,221.84) .. (332.67,223.5) ;
	%Curve Lines [id:da620952219450114] 
	\draw [color={rgb, 255:red, 2; green, 3; blue, 208 }  ,draw opacity=1 ]   (316.58,174.75) .. controls (345.51,166.72) and (351.93,175.03) .. (353.54,176.69) ;
	%Curve Lines [id:da8166749521490683] 
	\draw [color={rgb, 255:red, 2; green, 3; blue, 208 }  ,draw opacity=1 ]   (269.67,227.5) .. controls (267.33,223.5) and (264.33,205.17) .. (266.67,190.5) ;
	%Curve Lines [id:da43671619663717887] 
	\draw [color={rgb, 255:red, 2; green, 3; blue, 208 }  ,draw opacity=1 ]   (282.6,102.6) .. controls (279.4,117) and (280.38,122.56) .. (288.2,139) ;
	%Curve Lines [id:da6604561787414392] 
	\draw [color={rgb, 255:red, 80; green, 227; blue, 194 }  ,draw opacity=1 ]   (288.2,131.4) .. controls (275.8,149.4) and (302.46,170.39) .. (304.6,172.6) ;
	%Straight Lines [id:da5773838102939582] 
	\draw [color={rgb, 255:red, 208; green, 2; blue, 142 }  ,draw opacity=1 ]   (263,197.17) -- (297.67,161.5) ;
	%Curve Lines [id:da4923563459690812] 
	\draw [color={rgb, 255:red, 2; green, 3; blue, 208 }  ,draw opacity=1 ]   (226.2,166.2) .. controls (235,171.8) and (245.06,172.38) .. (260.06,167.96) ;
	%Curve Lines [id:da6458307694502103] 
	\draw [color={rgb, 255:red, 80; green, 227; blue, 194 }  ,draw opacity=1 ]   (252.6,166.6) .. controls (258.38,171.09) and (268.2,175) .. (284.2,166.2) ;
	%Curve Lines [id:da7832035845499277] 
	\draw [color={rgb, 255:red, 208; green, 160; blue, 2 }  ,draw opacity=1 ]   (261.67,178.17) .. controls (272.67,141.5) and (278.67,165.5) .. (282.33,170.83) ;
	%Curve Lines [id:da7132119818520654] 
	\draw [color={rgb, 255:red, 208; green, 2; blue, 142 }  ,draw opacity=1 ]   (247.33,174.17) .. controls (257,187.83) and (287.67,184.5) .. (288.67,182.83) ;
	%Curve Lines [id:da5160111302025289] 
	\draw [color={rgb, 255:red, 208; green, 164; blue, 2 }  ,draw opacity=1 ]   (258,193.5) .. controls (256.33,213.5) and (336.67,203.83) .. (331,162.83) ;
	%Curve Lines [id:da5901976562318263] 
	\draw [color={rgb, 255:red, 208; green, 164; blue, 2 }  ,draw opacity=1 ]   (265.67,129.17) .. controls (295.33,113.5) and (328.67,145.83) .. (331,162.83) ;
	%Straight Lines [id:da6655204417323964] 
	\draw [color={rgb, 255:red, 208; green, 164; blue, 2 }  ,draw opacity=1 ] [dash pattern={on 0.84pt off 2.51pt}]  (261.67,178.17) -- (258,193.5) ;
	%Curve Lines [id:da5667087030965873] 
	\draw [color={rgb, 255:red, 208; green, 2; blue, 142 }  ,draw opacity=1 ]   (247.33,174.17) .. controls (232,149.83) and (313.67,136.83) .. (320.67,179.83) ;

\end{tikzpicture}
	\caption{\label{fig:6b} The surface $Y$ when $P=P_{6b}$.}	
\end{figure}

\subsection{6c}

Set $P=P_{6c}$. The toric surface $Y_P$ has one $A_1$-singularity. Resolving, the toric boundary of $\minresYP$ has six components, giving rise to an $I_6$ fibre over $\infty$. The base locus of the pencil consists of six basepoints, one of which is infinitely near of the first order.
$\minresYP$ has a smooth affine patch in which $F_\lambda$ is given by the equation 
\[
x^2y+x^2+2x+1+y+y^2x+\lambda xy=0.
\]
By observing that $P$ admits two different Minkowski decompositions into two pieces, we see that (the affine patches) of $F_2$ and of $F_3$ are reducible, more precisely given by the equations
\[
(1 + x + y) (1 + x + x y) = 0 \quad \text{and} \quad (1 + y) (1 + 2 x + x^2 + x y) = 0,
\]
respectively.

By checking the other charts, we verify that the singular locus of $F_3$ is disjoint from the toric boundary, so that $F_3$ gives rise to a $I_2$ fibre. 
On the other hand, the two branches of $F_2$ meet the toric divisor $\{y=0\}$ transversely at the point $(-1,0)$. The general member of $F_\lambda$ is tangent to order $2$ to $\{y=0\}$ at $(-1,0)$, so that after blowing up the base locus, $F_2$ picks up an exceptional divisor (in cyan in Figure~\ref{fig:6c}), and becomes a $I_3$ fibre (the purple curves in Figure~\ref{fig:6c} are the proper transforms of the two components of $F_2$).

To find the other singular fibres, we can solve $\frac{\partial F}{\partial x}=0$ for $x$, and substitute into $F=\frac{\partial F}{\partial y}=0$. The resultant of $F$ and $\frac{\partial F}{\partial y}$ is a polynomial in $\lambda$ with roots at $\lambda=2,3$ and $-6$. 
Since the Euler numbers of the singular fibres must add up to $12$, we must have an $I_1$ fibre at $\lambda=-6$.

\begin{figure}[ht!]

\tikzset{every picture/.style={line width=0.5pt}} %set default line width to 0.75pt        

\begin{tikzpicture}[x=0.75pt,y=0.75pt,yscale=-1,xscale=1, scale=1.1]
	%uncomment if require: \path (0,300); %set diagram left start at 0, and has height of 300
	
	%Straight Lines [id:da7518723243472698] 
	\draw    (318.5,36) -- (318.5,164) ;
	%Straight Lines [id:da1554792109124572] 
	\draw    (443.5,143) -- (390.5,182) ;
	%Straight Lines [id:da2502303904486971] 
	\draw    (433,38) -- (433,163) ;
	%Straight Lines [id:da12269192592596312] 
	\draw    (441.5,48) -- (308.5,48) ;
	%Straight Lines [id:da4048262826738014] 
	\draw    (368.5,185) -- (306.5,142) ;
	%Curve Lines [id:da29050480695865066] 
	\draw [color={rgb, 255:red, 208; green, 2; blue, 27 }  ,draw opacity=1 ]   (346,199) .. controls (353.5,169) and (399.5,158) .. (415.5,187) ;
	%Curve Lines [id:da1892998356085427] 
	\draw [color={rgb, 255:red, 0; green, 97; blue, 208 }  ,draw opacity=1 ]   (373.5,36) .. controls (357.5,53) and (358.5,74) .. (382.5,69) ;
	%Curve Lines [id:da9887407556744416] 
	\draw [color={rgb, 255:red, 80; green, 227; blue, 194 }  ,draw opacity=1 ]   (378.5,58) .. controls (368,76) and (386,100) .. (401,100) ;
	%Curve Lines [id:da6616698567916179] 
	\draw [color={rgb, 255:red, 0; green, 97; blue, 208 }  ,draw opacity=1 ]   (304.5,95) .. controls (318,81) and (362,100) .. (371,106) ;
	%Curve Lines [id:da19385950540484465] 
	\draw [color={rgb, 255:red, 0; green, 97; blue, 208 }  ,draw opacity=1 ]   (382,123) .. controls (395.5,109) and (440.5,116) .. (449.5,122) ;
	%Curve Lines [id:da07054406907370248] 
	\draw [color={rgb, 255:red, 0; green, 97; blue, 208 }  ,draw opacity=1 ]   (377,127) .. controls (388,142) and (430.5,161) .. (433.5,182) ;
	%Curve Lines [id:da45806566948192984] 
	\draw [color={rgb, 255:red, 0; green, 97; blue, 208 }  ,draw opacity=1 ]   (328.5,183) .. controls (334,169) and (347,152) .. (354,145) ;
	%Curve Lines [id:da3382301610000442] 
	\draw [color={rgb, 255:red, 189; green, 16; blue, 224 }  ,draw opacity=1 ]   (397,125) .. controls (293,160) and (369,76) .. (395,78) ;
	%Curve Lines [id:da5539501983783777] 
	\draw [color={rgb, 255:red, 189; green, 16; blue, 224 }  ,draw opacity=1 ]   (393,87) .. controls (392,99) and (385,123) .. (392,137) ;
	%Straight Lines [id:da8290584447032996] 
	\draw [color={rgb, 255:red, 0; green, 88; blue, 193 }  ,draw opacity=1 ]   (396,103) -- (382,113) ;
	%Straight Lines [id:da514828153223428] 
	\draw [color={rgb, 255:red, 0; green, 80; blue, 181 }  ,draw opacity=1 ] [dash pattern={on 0.84pt off 2.51pt}]  (382,113) -- (354,145) ;

\end{tikzpicture}

	\caption{\label{fig:6c} The surface $Y$ when $P = P_{6c}$.}
	
\end{figure}
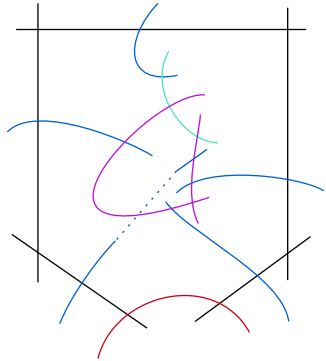

\subsection{6d}

Set $P=P_{6d}$. The toric surface $Y_P$ has one $A_1$-singularity and one $A_2$-singularity.
Resolving, the toric boundary of $\minresYP$ has six components, giving rise to an $I_6$ fibre over $\infty$. The base locus of the pencil consists of $6$ basepoints, two of which are infinitely near of the first order, and one which is infinitely near of the second order. $\minresYP$ has a smooth affine patch in which $F_\lambda$ is given by the equation 
\[
x^3+3x^2+3x+1+2y+y^2+\lambda xy=0
\]
To find the singular fibres, we can solve $\frac{\partial F}{\partial y}=0$ for $y$, and substitute into $F=\frac{\partial F}{\partial x}=0$. The resultant of $F$ and $\frac{\partial F}{\partial x}$ is a polynomial in $\lambda$ with roots at $\lambda=2$,$3$ and $-6$. 
We check that $F_2, F_3, F_{-6}$ are irreducible curves.

The curve $F_2$ has a node at the point $(-1,0)$, with principal tangents $\{ y = 0\}$ (which is a component of the toric boundary $D_P \subset Y_P$) and $\{2x+y+2=0\}$ (which is transverse to the toric boundary $D_P \subset Y_P$).
Since at the point $(-1,0)$ there are an infinitely near base point of the pencil of the first order and an infinitely near base point of the pencil of the second order,  we have that $f^{-1}(2)$ is the union of the strict transform of $F_2$ (in purple in the middle of Figure~\ref{fig:6d}) and of two $(-2)$-curves which lie over the point $(-1,0)$ (in cyan in the middle of Figure~\ref{fig:6d}), hence $f^{-1}(2)$ an $I_3$ fibre.

The curve $F_3$ has a node at the point $(0,-1)$, with principal tangents given by the equation $\al x - y - 1 = 0$, where $\al \in \CC$ is such that $\al^2 + 3 \al +3 =0$.
Therefore the fibre $f^{-1}(3)$ is the union of the proper transform of $F_3$ (in yellow in Figure~\ref{fig:6d}) and the $(-2)$-curve over the point $(0,-1)$ (in cyan on the left of Figure~\ref{fig:6d}), hence it is an $I_2$-fibre.

The curve $F_{-6}$ is smooth along the base locus of the pencil and must be singular somewhere in $Y_P \setminus D_P$.
Since the Euler numbers of the singular fibres must add up to $12$, we must have a $I_1$ fibre at $\lambda=-6$.

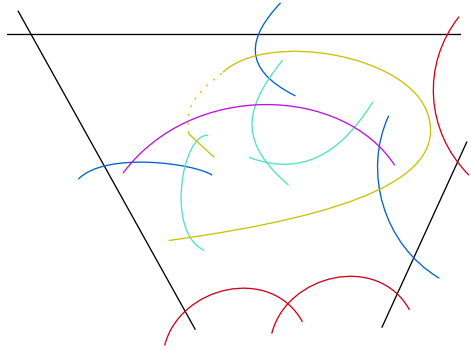
\begin{figure}[ht!]

	\tikzset{every picture/.style={line width=0.5pt}} %set default line width to 0.75pt        

\begin{tikzpicture}[x=0.75pt,y=0.75pt,yscale=-1,xscale=1, scale=1]
	%uncomment if require: \path (0,235); %set diagram left start at 0, and has height of 235
	
	%Straight Lines [id:da5815768679285784] 
	\draw    (238,56) -- (327.5,217) ;
	%Straight Lines [id:da465128205024244] 
	\draw    (464.5,122) -- (421.5,216) ;
	%Straight Lines [id:da40342551689178685] 
	\draw    (461.5,68) -- (232.5,68) ;
	%Curve Lines [id:da06308519860990724] 
	\draw [color={rgb, 255:red, 208; green, 2; blue, 27 }  ,draw opacity=1 ]   (366,219) .. controls (373.5,189) and (419.5,178) .. (435.5,207) ;
	%Curve Lines [id:da5285642073422376] 
	\draw [color={rgb, 255:red, 0; green, 97; blue, 208 }  ,draw opacity=1 ]   (370.5,52) .. controls (354.5,69) and (350,85) .. (378,99) ;
	%Curve Lines [id:da20406020345450693] 
	\draw [color={rgb, 255:red, 80; green, 227; blue, 194 }  ,draw opacity=1 ]   (371.5,81) .. controls (349.5,108) and (351.5,124) .. (374.5,144) ;
	%Curve Lines [id:da36112556844696075] 
	\draw [color={rgb, 255:red, 0; green, 97; blue, 208 }  ,draw opacity=1 ]   (268.5,141) .. controls (282,127) and (327,133) .. (336,139) ;
	%Curve Lines [id:da03760401994746054] 
	\draw [color={rgb, 255:red, 0; green, 97; blue, 208 }  ,draw opacity=1 ]   (425,109) .. controls (412,137) and (422,174) .. (450.5,191) ;
	%Curve Lines [id:da40779748452692366] 
	\draw [color={rgb, 255:red, 80; green, 227; blue, 194 }  ,draw opacity=1 ]   (332,177) .. controls (312,168) and (321,117) .. (334,119) ;
	%Curve Lines [id:da7428238242510188] 
	\draw [color={rgb, 255:red, 208; green, 2; blue, 27 }  ,draw opacity=1 ]   (312,225) .. controls (319.5,195) and (365.5,184) .. (381.5,213) ;
	%Curve Lines [id:da14303445202191623] 
	\draw [color={rgb, 255:red, 208; green, 2; blue, 27 }  ,draw opacity=1 ]   (465.5,139) .. controls (443.5,117) and (441.5,83) .. (460.5,59) ;
	%Curve Lines [id:da13226231720165393] 
	\draw [color={rgb, 255:red, 80; green, 227; blue, 194 }  ,draw opacity=1 ]   (354.5,130) .. controls (386.5,142) and (405,121) .. (417,102) ;
	%Curve Lines [id:da647405222513594] 
	\draw [color={rgb, 255:red, 207; green, 193; blue, 1 }  ,draw opacity=1 ]   (341.48,87.03) .. controls (364,66) and (436,78) .. (445,109) .. controls (454,140) and (408,159) .. (314,172) ;
	%Curve Lines [id:da8872140168608049] 
	\draw [color={rgb, 255:red, 189; green, 16; blue, 224 }  ,draw opacity=1 ]   (291,138) .. controls (326,91) and (402,94) .. (428,134) ;
	%Curve Lines [id:da1867234948184915] 
	\draw [color={rgb, 255:red, 207; green, 193; blue, 1 }  ,draw opacity=1 ] [dash pattern={on 0.84pt off 2.51pt}]  (324,118) .. controls (323,105) and (325,100) .. (341.48,87.03) ;
	%Straight Lines [id:da7849359283828878] 
	\draw [color={rgb, 255:red, 207; green, 193; blue, 1 }  ,draw opacity=1 ]   (324,118) -- (337,130) ;

\end{tikzpicture}

\caption{\label{fig:6d} The surface $Y$ when $P = P_{6d}$.}
\end{figure}

\subsection{7a}

Set $P=P_{7a}$. The toric surface $Y_P$ is smooth, so $\minresYP = Y_P$.
The toric boundary gives rise to an $I_5$ fibre over $\infty$. The base locus of the pencil consists of $7$ basepoints, two of which are infinitely near of the first order. $\minresYP$ has a smooth affine patch in which $F_\lambda$ is given by the equation 
\[
y^2+xy^2+x^2y+x^2+2x+1+2y+\lambda xy=0
\]
To find the singular fibres, we can solve $\frac{\partial F}{\partial y}=0$ for $y$, and substitute into $F=\frac{\partial F}{\partial x}=0$. The resultant of $F$ and $\frac{\partial F}{\partial x}$ is a polynomial in $\lambda$ with roots at $\lambda=3$ and $\tfrac{5}{2}(-1 \pm \sqrt{5})$. 
Observing that $P$ admits a Minkowski decomposition into three pieces, we can factor 
\[
F_3=(1 + x) (1 + y) (1 + x + y)
\]
as a triangle of lines singular at $(0,-1), (-1,0)$ and $(-1,-1)$. 
At $(0,-1)$, the curve $\{F_3=0\}$ meets the toric divisor $\{x=0\}$ transversely, with the tangents to the two branches being distinct, and similarly at $(-1,0)$. 
Since the general member of $F_\lambda$ is tangent to order $2$ at $(0,-1)$ and $(-1,0)$, we have that, after blowing up the base locus, $F_3$ picks up two exceptional curves, so that the fibre of $Y$ corresponding to $F_3$ is of type $I_5$.
An Euler number count now shows that the other two singular fibres are of type $I_1$.

\begin{figure}[ht!]
	\tikzset{every picture/.style={line width=0.5pt}} %set default line width to 0.75pt        
	
	\begin{tikzpicture}[x=0.75pt,y=0.75pt,yscale=-1,xscale=1]
		%uncomment if require: \path (0,235); %set diagram left start at 0, and has height of 235
		
		%Straight Lines [id:da9831519533527187] 
		\draw    (243,37) -- (245,197) ;
		%Straight Lines [id:da9507171765418625] 
		\draw    (231,50) -- (448,49) ;
		%Straight Lines [id:da3342728536995012] 
		\draw    (440,35) -- (439,145) ;
		%Straight Lines [id:da6092292573815525] 
		\draw    (237,186) -- (388,185) ;
		%Straight Lines [id:da23809466418900227] 
		\draw    (450,121) -- (367,191) ;
		%Curve Lines [id:da5472293820222578] 
		\draw [color={rgb, 255:red, 189; green, 16; blue, 224 }  ,draw opacity=1 ]   (289,80) .. controls (314,100) and (392,85) .. (415,67) ;
		%Curve Lines [id:da25752892889244694] 
		\draw [color={rgb, 255:red, 80; green, 227; blue, 194 }  ,draw opacity=1 ]   (348,60) .. controls (339,84) and (349,105) .. (375,110) ;
		%Curve Lines [id:da24628066385624292] 
		\draw [color={rgb, 255:red, 189; green, 16; blue, 224 }  ,draw opacity=1 ]   (287,149) .. controls (327,125) and (361,130) .. (387,149) ;
		%Curve Lines [id:da17704964870140882] 
		\draw [color={rgb, 255:red, 0; green, 80; blue, 181 }  ,draw opacity=1 ]   (338,36) .. controls (328,55) and (333,70) .. (351,75) ;
		%Curve Lines [id:da038027062530304745] 
		\draw [color={rgb, 255:red, 80; green, 227; blue, 194 }  ,draw opacity=1 ]   (296,77) .. controls (306,93) and (308,129) .. (300,152) ;
		%Curve Lines [id:da1853452530189923] 
		\draw [color={rgb, 255:red, 189; green, 16; blue, 224 }  ,draw opacity=1 ]   (366,96) .. controls (355,123) and (362,141) .. (375,162) ;
		%Curve Lines [id:da856248291903424] 
		\draw [color={rgb, 255:red, 0; green, 80; blue, 181 }  ,draw opacity=1 ]   (343,201) .. controls (344,181) and (353,157) .. (372,144) ;
		%Curve Lines [id:da22990973965297812] 
		\draw [color={rgb, 255:red, 0; green, 80; blue, 181 }  ,draw opacity=1 ]   (376,150) .. controls (395,122) and (422,110) .. (466,111) ;
		%Curve Lines [id:da6829679412472633] 
		\draw [color={rgb, 255:red, 0; green, 80; blue, 181 }  ,draw opacity=1 ]   (402,120) .. controls (406,136) and (412,147) .. (421,158) ;
		%Curve Lines [id:da3828340365830172] 
		\draw [color={rgb, 255:red, 0; green, 80; blue, 181 }  ,draw opacity=1 ]   (392,61) .. controls (390,78) and (393,89) .. (396,100) ;
		%Straight Lines [id:da7579969119000256] 
		\draw [color={rgb, 255:red, 0; green, 80; blue, 181 }  ,draw opacity=1 ] [dash pattern={on 0.84pt off 2.51pt}]  (396,100) -- (402,120) ;
		%Curve Lines [id:da8992894828213738] 
		\draw [color={rgb, 255:red, 0; green, 80; blue, 181 }  ,draw opacity=1 ]   (224,109) .. controls (265,122) and (284,127) .. (324,109) ;

	\end{tikzpicture}

\caption{\label{fig:7a} The surface $Y$ when $P = P_{7a}$.}

\end{figure}
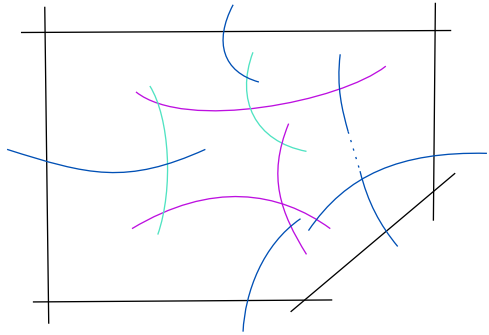

\subsection{7b}

Set $P=P_{7b}$. The toric surface $Y_P$ has one $A_1$-singularity. Resolving, the toric boundary of $\minresYP$ has five components, giving rise to an $I_5$ fibre over $\infty$. The base locus of the pencil consists of seven basepoints, two of which are infinitely near of the first order, and one which is infinitely near of the second order. The toric surface has a smooth affine patch in which $F_\lambda$ is given by the equation 
\[
x^3+3x^2+3x+1+2y+y^2+x^2y+\lambda xy=0
\]
To find the singular fibres, we can solve $\frac{\partial F}{\partial y}=0$ for $y$, and substitute into $F=\frac{\partial F}{\partial x}=0$. The resultant of $F$ and $\frac{\partial F}{\partial x}$ is a polynomial in $\lambda$ with roots at $\lambda=3$ and $\tfrac{5}{2}(-1 \pm \sqrt{5})$. 
Observing that $P$ admits a Minkowski decomposition into two pieces, we can factor 
\[
F_3=(1 + x + y) (1 + 2 x + x^2 + y)
\]
as a union of two curves meeting at $(0,-1), (-1,0)$.
At $(0,-1)$, the two components fo $F_3$ meet the toric divisor $\{x=0\}$ transversely, with the tangents to the two curves being distinct.
Since the point $(0,-1)$ is where we have an infinitely near base point of first order, $F_3$ picks up one exceptional divisor here when resolving the base locus. 
On the other hand, at $(-1,0)$, one curve meets the divisor $\{y=0\}$ transversely, and the other curve is tangent to $\{y=0\}$. The point $(-1,0)$ is where we have an infinitely near base point of first order and an infinitely near base point of second order, so that $F_3$ picks up two exceptional divisors.
Summarising, we see that $F_3$ gives rise to an $I_5$ fibre at $\lambda=3$. 
An Euler number count now shows that the other two singular fibres are of type $I_1$.

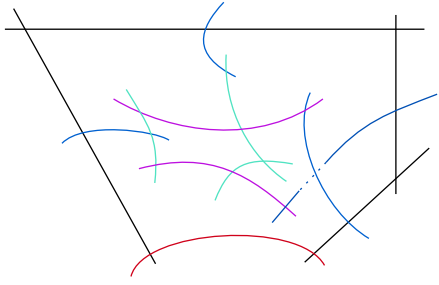
\begin{figure}[ht!]
	
\tikzset{every picture/.style={line width=0.5pt}} %set default line width to 0.75pt        

\begin{tikzpicture}[x=0.75pt,y=0.75pt,yscale=-1,xscale=1, scale=0.8]
	%uncomment if require: \path (0,235); %set diagram left start at 0, and has height of 235
	
	%Straight Lines [id:da4455004121422448] 
	\draw    (238,56) -- (327.5,217) ;
	%Straight Lines [id:da1480943566058992] 
	\draw    (500,144) -- (421.5,216) ;
	%Straight Lines [id:da15276651875737968] 
	\draw    (497,69) -- (232.5,69) ;
	%Curve Lines [id:da6223507876076153] 
	\draw [color={rgb, 255:red, 0; green, 97; blue, 208 }  ,draw opacity=1 ]   (370.5,52) .. controls (354.5,69) and (350,85) .. (378,99) ;
	%Curve Lines [id:da2634686028220896] 
	\draw [color={rgb, 255:red, 0; green, 97; blue, 208 }  ,draw opacity=1 ]   (268.5,141) .. controls (282,127) and (327,133) .. (336,139) ;
	%Curve Lines [id:da015405808083936101] 
	\draw [color={rgb, 255:red, 0; green, 97; blue, 208 }  ,draw opacity=1 ]   (425,109) .. controls (412,137) and (433.5,184) .. (462,201) ;
	%Curve Lines [id:da6067336549559155] 
	\draw [color={rgb, 255:red, 208; green, 2; blue, 27 }  ,draw opacity=1 ]   (312,225) .. controls (319.5,195) and (418,189) .. (434,218) ;
	%Straight Lines [id:da6969590316411554] 
	\draw    (479,60) -- (479,173) ;
	%Curve Lines [id:da3374916743189409] 
	\draw [color={rgb, 255:red, 80; green, 227; blue, 194 }  ,draw opacity=1 ]   (365,177) .. controls (374,154) and (385,149) .. (414,154) ;
	%Curve Lines [id:da9978578021737412] 
	\draw [color={rgb, 255:red, 80; green, 227; blue, 194 }  ,draw opacity=1 ]   (372,85) .. controls (370,121) and (388,150) .. (410,165) ;
	%Curve Lines [id:da8867055020803629] 
	\draw [color={rgb, 255:red, 189; green, 16; blue, 224 }  ,draw opacity=1 ]   (301,113) .. controls (338,135) and (393,143) .. (433,113) ;
	%Curve Lines [id:da8055720150922339] 
	\draw [color={rgb, 255:red, 0; green, 80; blue, 181 }  ,draw opacity=1 ]   (435,153) .. controls (458,127) and (474,122) .. (505,110) ;
	%Curve Lines [id:da9303031542471469] 
	\draw [color={rgb, 255:red, 80; green, 227; blue, 194 }  ,draw opacity=1 ]   (309,107) .. controls (323,130) and (330,137) .. (327,166) ;
	%Curve Lines [id:da9654934120303473] 
	\draw [color={rgb, 255:red, 189; green, 16; blue, 224 }  ,draw opacity=1 ]   (317,157) .. controls (363,145) and (387,161) .. (416,187) ;
	%Straight Lines [id:da15764972100549657] 
	\draw [color={rgb, 255:red, 0; green, 80; blue, 181 }  ,draw opacity=1 ]   (418,171) -- (401,191) ;
	%Straight Lines [id:da7610277909954521] 
	\draw [color={rgb, 255:red, 0; green, 80; blue, 181 }  ,draw opacity=1 ] [dash pattern={on 0.84pt off 2.51pt}]  (435,153) -- (418,171) ;
	
\end{tikzpicture}
\caption{\label{fig:7b} The surface $Y$ when $P = P_{7b}$.}
\end{figure}

\subsection{8a}
Set $P = P_{8a}$.
Then $Y_P = \minresYP = \PP^1 \times \PP^1$ with coordinates $([x_0 : x_1], [y_0 : y_1])$ and
\[
F_\lambda = \{(x_0 + x_1)^2 (y_0 + y_1)^2 + (\lambda - 4) x_0 x_1 y_0 y_1 = 0\} \subset \PP^1 \times \PP^1.
\]
The base locus of $\pencilP$ consists of $4$ reduced points, namely $([1:-1],[0:1])$, $([1:-1],[1:0])$, $([0:1],[1:-1])$, $([1:0],[1:-1])$, and $4$ infinitely near base points of the first order.

The surface $Y$ is obtained by blowing up these $4$ points $2$ times.
The situation is described in Figure~\ref{fig:P8a}: the strict transforms of the $4$ components of the toric boundary of $\PP^1 \times \PP^1$ are depicted in black, are $(-2)$-curves, and constitute $D$; 
there are $8$ exceptional curves of $Y \to \PP^1 \times \PP^1$: $4$ of them, depicted in blue, are $(-1)$-curves; the remaining $4$, depicted in cyan, are $(-2)$-curves.

The reducible curve in purple in Figure~\ref{fig:P8a} is the strict transform of the curve $\{ (x_0 + x_1)(y_0 + y_1) = 0 \} \subset \PP^1 \times \PP^1$, which is the reduction of $F_4$.
The fibre $f^{-1}(4)$ is equal to the sum of: $2$ times the two purple curves, $1$ time the cyan curves.
Hence $f^{-1}(4)$ is of type $I_1^*$.

Considering the topological Euler characteristic we deduce that there must be also a $I_1$ fibre. Hence, the singular fibres are one $I_4$, one $I_1$ and one $I_1^*$.
\begin{figure}[ht!]
	\centering
	\includegraphics[width=0.25\textwidth]{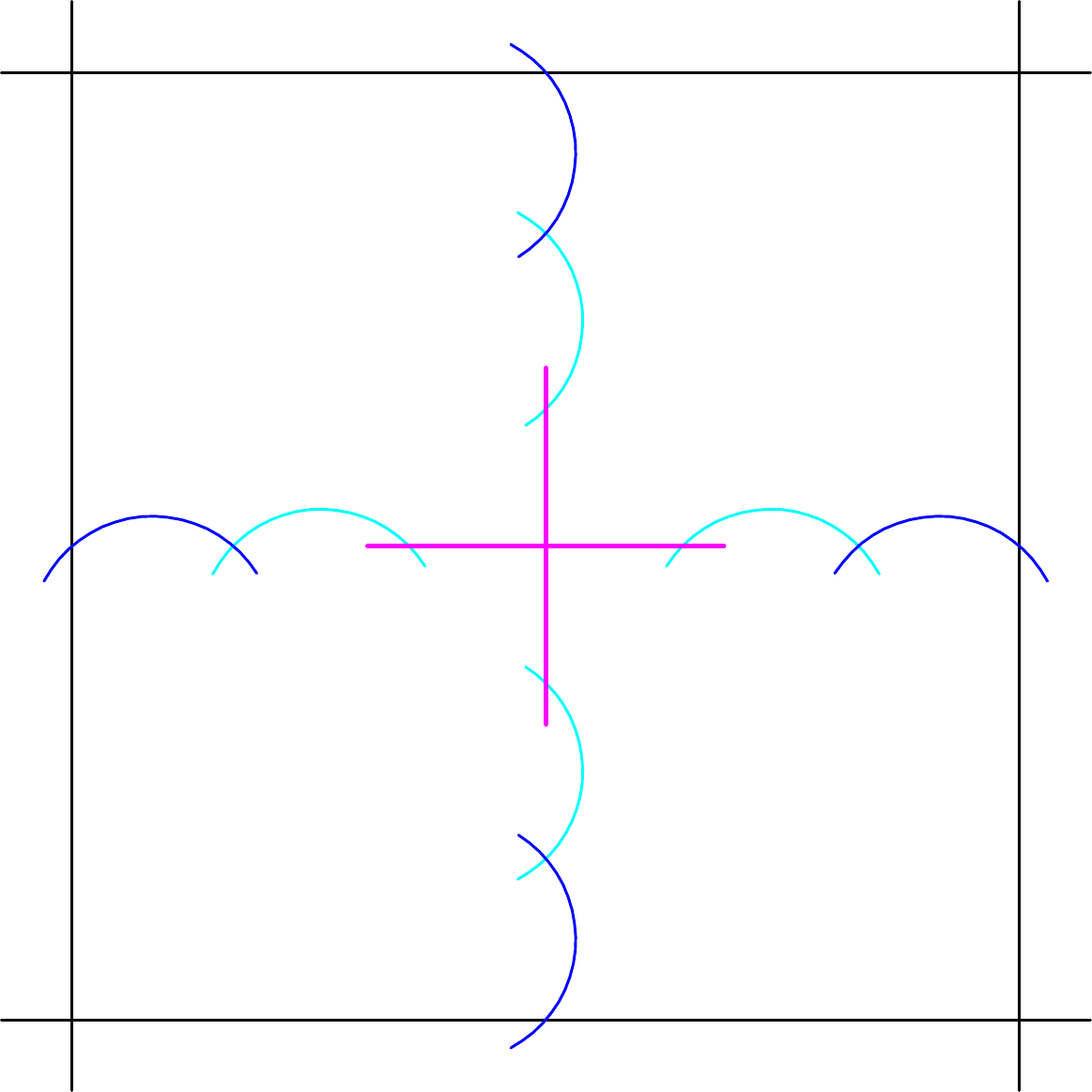}
	\caption{The surface $Y$ when $P = P_{8a}$.}
	\label{fig:P8a}
\end{figure}

\subsection{8b}
Let $P = P_{8b}$.
Then $Y_P = \minresYP$ is the first Hirzebruch surface $\FF_1$.
The fibre $D$ is of type $I_4$.

Let us use the affine chart $U$ of $Y_P$ isomorphic to $\AA^2$ associated to the cone with rays $e_1$ and  $e_2$.
Then $F_\lambda \cap U$ is isomorphic to
\[
C_\lambda = \left\{  (1+x)(1+y+xy)^2 + (\lambda - 4) xy = 0   \right\} \subset \AA^2.
\]

Clearly $C_4$ is reducible and has two components: $\{ 1 +x = 0\}$ with multiplicity $1$, and $\{ 1+xy+y  = 0 \}$ with multiplicity $2$.
These components do not intersect.
We obtain that $f^{-1}(4) \subset Y$ is an $I_1^*$ fibre, (the union of the cyan and purple curves in Figure~\ref{fig:P8b}).

With a topological Euler characteristic count, we see that there must be also an $I_1$ fibre.

\begin{figure}[ht!]
	\centering
	\includegraphics[width=0.4\textwidth]{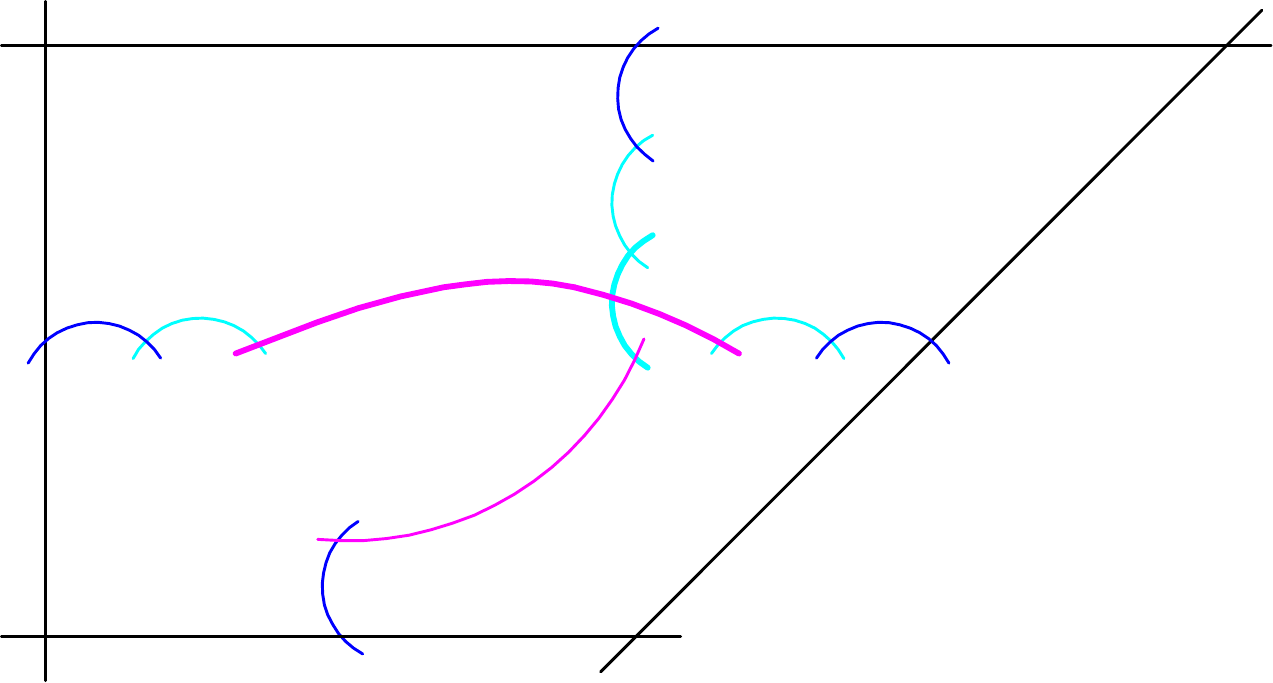}
	\caption{The surface $Y$ when $P = P_{8b}$.}
	\label{fig:P8b}
\end{figure}

\subsection{8c}

Let $P=P_{8c}$. The normal fan of $P$ is the face fan of $P_{4c}$. Then $Y_P$ is $\PP(1,1,2)$. It has one $A_1$ singularity. 
Let $x,y,z$ be weighted homogeneous coordinates on $\PP(1,1,2)$. 
Then $\textbf{1}_P$ is  $xyz$, and $f_P$ is 
$(x+y)^4+z(2x^2+2y^2+z)$.
The base locus of the pencil $\delta_P$ is made of $8$ points: 
$3$ reduced points and $3$ infinitely near base points of the first order at $[1:-1:0]$, $[0:1:-1]$, $[1:0:-1]$, and 
an infinitely near base pint of the second order and
an infinitely near base pint of the third order at $[1:-1:0]$.

The minimal resolution $\overline{Y}_P$ has one $(-2)$-curve and is the $2$nd Hirzebruch surface $\PP_{\PP^1}(\cO \oplus \cO(2))$. The strict transforms in $\overline{Y}_P$ of the components of $D_P$ are two $0$-curves and one $2$-curve. 

The exceptional curves of $Y \to  \overline{Y}_P$  are five $(-2)$-curves  (in cyan in Figure~\ref{fig:8c}) and three $(-1)$-curves  (in blue in Figure~\ref{fig:8c}). The divisor $D=f^{-1}(\infty)$ is a $I_4$ fibre.
%The curve $F_\lambda \subset \PP(1,1,2)$ is given by
%\begin{equation*}
%\label{eq:F-8b}
%\left(\lambda xyz+(x+y)^4+z(2x^2+2y^2+z)=0 \right) \subset \PP(1,1,2)	
%\end{equation*}

The curve $F_4$ is the non-reduced curve:
\[
\left\{  \left((x+y)^2+z\right)^2=0\right\} \subset \PP(1,1,2).
 \]
The rational curve $\left\{ (x+y)^2+z=0 \right\}$ intersects $ \{x=0\} $ transversely at $[0:1:-1]$, $ \{y=0\}$ transversely at $[1:0:-1]$, and is tangent to $\{ z=0 \}$ at $[1:-1:0]$.
Then the fibre $f^{-1}(4)$ is given by the union of twice the strict transform of $\left\{ (x+y)^2+z=0 \right\}$ in $Y$ (in purple in Figure~\ref{fig:8c}), once the four $(-2)$-curves that do not intersect $\widetilde{F}_4$, and twice the $(-2)$-curve intersecting $\widetilde{F}_4$.
Thus $f^{-1}(4)$ is a fibre of type $I_1^\ast$.

The sum of the topological Euler characteristic of the singular fibres different from $I_4$ must be 8. 
It follows that $f$ has fibres of type $I_4, I_1^\ast, I_1$.

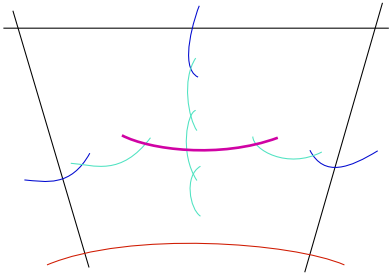
\begin{figure}[ht!]
	\tikzset{every picture/.style={line width=0.4pt}} %set default line width to 0.75pt        
	
	\begin{tikzpicture}[x=0.75pt,y=0.75pt,yscale=-1,xscale=1, scale=1.2]
		%uncomment if require: 
		\path (230,180); %set diagram left start at 0, and has height of 300
		
		%Straight Lines [id:da027040232732863045] 
		\draw    (393,49) -- (360.33,162.33) ;
		%Straight Lines [id:da07792920491912392] 
		\draw    (237.67,52.33) -- (269.67,160.33) ;
		%Straight Lines [id:da8349798594372833] 
		\draw    (233.67,59.67) -- (395.67,59.67) ;
		%Curve Lines [id:da9732928945410184] 
		\draw [color={rgb, 255:red, 208; green, 27; blue, 2 }  ,draw opacity=1 ]   (252,159.25) .. controls (286.5,144.75) and (355,149.75) .. (377,159.25) ;
		%Curve Lines [id:da19094232615099416] 
		\draw [color={rgb, 255:red, 2; green, 10; blue, 208 }  ,draw opacity=1 ]   (316,50.25) .. controls (314.5,53.75) and (307,76.25) .. (315.5,80.25) ;
		%Curve Lines [id:da5669688846347063] 
		\draw [color={rgb, 255:red, 2; green, 10; blue, 208 }  ,draw opacity=1 ]   (242.5,123.5) .. controls (251.5,124.25) and (262,126.75) .. (270,112.25) ;
		%Curve Lines [id:da71812287828689] 
		\draw [color={rgb, 255:red, 80; green, 227; blue, 194 }  ,draw opacity=1 ]   (262,116.5) .. controls (272,117.25) and (283,122.25) .. (295.5,105.75) ;
		%Curve Lines [id:da9875945276012592] 
		\draw [color={rgb, 255:red, 80; green, 227; blue, 194 }  ,draw opacity=1 ]   (314.5,72.25) .. controls (308.5,81.25) and (311.5,97.75) .. (315,102.75) ;
		%Curve Lines [id:da7424333820623498] 
		\draw [color={rgb, 255:red, 80; green, 227; blue, 194 }  ,draw opacity=1 ]   (314.5,94.25) .. controls (311,95) and (307.5,112.25) .. (315,123.75) ;
		%Curve Lines [id:da5658204732086421] 
		\draw [color={rgb, 255:red, 80; green, 227; blue, 194 }  ,draw opacity=1 ]   (316.5,117.75) .. controls (309.5,121.75) and (312,135.75) .. (316.5,138.75) ;
		%Curve Lines [id:da5696100910264553] 
		\draw [color={rgb, 255:red, 2; green, 10; blue, 208 }  ,draw opacity=1 ]   (362.5,111) .. controls (368.5,122.75) and (379.5,118.25) .. (391,111.25) ;
		%Curve Lines [id:da7533275657278262] 
		\draw [color={rgb, 255:red, 80; green, 227; blue, 194 }  ,draw opacity=1 ]   (338.5,105.25) .. controls (338.5,110.75) and (353,119.25) .. (367.5,111.75) ;
		%Curve Lines [id:da6555992739159386] 
		\draw [color={rgb, 255:red, 208; green, 2; blue, 164 }  ,draw opacity=1, line width=1 ]   (283.5,104.75) .. controls (295,110.75) and (324.5,114.75) .. (349,105.75) ;
	\end{tikzpicture}
	\caption{\label{fig:8c} The surface $Y$ when $P=P_{8c}$.}	
\end{figure}

\subsection{9}
Set $P = P_9$.
Then $Y_P = \minresYP = \PP^2$ and
\[
F_\lambda = \{ (x_0 + x_1 + x_2)^3 + (\lambda -6) x_0 x_1 x_2 = 0 \} \subset \PP^2.
\]
The base locus of $\pencilP$ is made up of $9$ points: the $3$ points $[0:1:-1]$, $[1:0:-1]$, $[1:-1:0]$, $3$ infinitely near points of the first order,
$3$ infinitely near points of the second order.

The surface $Y$ is obtained by blowing up these $3$ points $3$ times. The situation is described in Figure~\ref{fig:P9}: the strict transforms of the coordinate lines of $\PP^2$ are depicted in black and are $(-2)$-curves;
there are $9$ exceptional curves of $Y \to \PP^2$: $3$ of them, depicted in blue, are $(-1)$-curves; the remaining $6$, depicted in cyan and in green, are $(-2)$-curves.

The curve depicted in purple in Figure~\ref{fig:P9} is an interesting curve: it is the strict transform of the line $\{ x_1 + x_2 + x_3 = 0 \} \subset \PP^2$, which is the reduction of $F_6$.
One sees that the fibre $f^{-1}(6)$ is equal to the sum of: $3$ times the purple curve, $2$ times the green curves, $1$ time the cyan curves. Hence $f^{-1}(6)$ is of type $IV^*$.

Considering the topological Euler characteristic we deduce that there must be also a $I_1$ fibre. Hence, the singular fibres are one $I_3$, one $I_1$ and one $IV^*$.

\begin{figure}[ht!]
	\centering
	\includegraphics[width=0.35\textwidth]{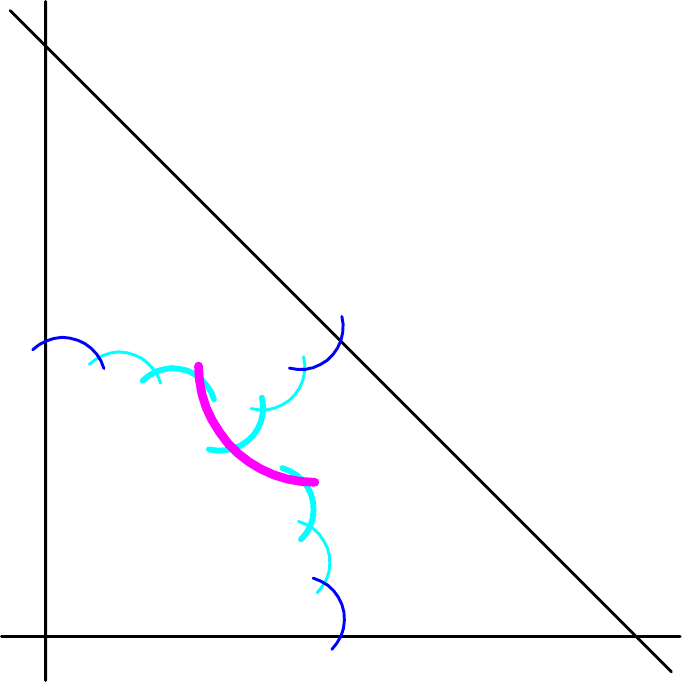}
	\caption{The surface $Y$ when $P = P_9$.}
	\label{fig:P9}
\end{figure}

\subsection{Conclusion} \label{sec:conclusion}

The analysis of the singular fibres of the elliptic fibrations $Y \to \PP^1$ constructed as in Construction~\ref{constr:2} from all reflexive polygons $P$ is summarised in Table~\ref{tab:summary_singular_fibre}. 
The properties of the sections described in the examples can be calculated as we do for the polytopes 3, 4a, and 4b.
In the same table we also list the corresponding number in \cite[Table~8.2]{schuett_shioda_book} and the corresponding Mordell--Weil group.

%\textcolor{red}{ We can add the lattice structure.}
\begin{table}[ht!]
	\begin{equation*}
		\begin{array}{llcc}
			P & \text{Singular fibres of } f  & \text{No.} & \MW(Y)  \\
			\hline
			P_3 & I_9, \ 3 \times I_1  & 63 & \ZZ/3\ZZ   \\
			P_{4a}, P_{4c} & I_8, \ I_2, \ 2 \times I_1 & 70 & \ZZ / 4 \ZZ   \\
			P_{4b} & I_8, \ 4 \times I_1 & 45 & \ZZ  \\
			P_{5a}, P_{5b} & I_7, \ I_2, \ 3 \times I_1 & 47 & \ZZ \\

			P_{6a},
			P_{6b},
			P_{6c},
			P_{6d} &  I_6, \ I_3, \ I_2, \ I_1  & 66 & \ZZ / 6\ZZ \\
			P_{7a},
			P_{7b} & 2 \times I_5, \ 2 \times I_1 & 67 & \ZZ / 5 \ZZ \\ 
			P_{8a},
			P_{8b},
			P_{8c} & I_4, \ I_1^*, \ I_1  & 72 & \ZZ / 4 \ZZ   \\
			P_9        & I_3, \ IV^*, \ I_1 & 69 & \ZZ/3\ZZ  \\
		\end{array}
	\end{equation*}
	\caption{ 	\label{tab:summary_singular_fibre} The singular fibres, the number in \cite[Table~8.2]{schuett_shioda_book}, and the Mordell--Weil group of the rational elliptic surface $Y$ constructed as in Construction~\ref{constr:2}, for each reflexive polygon $P$.
	Different reflexive polygons can give the same rational elliptic surface (see Remark~\ref{rmk:different_polygons_can_give_the_same_surface}).}
\end{table}

We have the following consequences:
\begin{itemize}
	\item if $P$ is not $\GL_2(\ZZ)$-equivalent to $P_{4b}$ nor to $P_{5a}$ nor to $P_{5b}$, then $Y$ is extremal and $\MW(Y)$ is a finite cyclic group;
	\item if $P$ is $\GL_2(\ZZ)$-equivalent to $P_{4b}$ or to $P_{5a}$ or to $P_{5b}$, then $\MW(Y)$ is an infinite cyclic group.
\end{itemize}

\begin{remark} \label{rmk:different_polygons_can_give_the_same_surface}
	Another interesting feature is that, with only one exception, reflexive polygons with the same volume give the same rational elliptic surface. The exception is for reflexive polygons with volume $4$: the rational elliptic surface of $P_{4b}$ is different from the rational elliptic surface of $P_{4a}$ and of $P_{4c}$.
	We explain this phenomenon below.
\end{remark}

\begin{remark}
	The classification of the singular fibres of the elliptic fibrations associated to $P_3$ and to $P_{4a}$ also appears in \cite[p.~504]{doran_kerr}.
\end{remark}

%%%%%%%%%%%%%%

\section{Mutations}

\subsection{Algebraic mutations} \label{sec:algebraic_mutations}

Let $M$ be a lattice of rank $n$ with dual lattice $N=\Hom_\ZZ(M,\ZZ)$ and consider the algebraic torus $T_M= \Spec \CC[N] = M \otimes_\ZZ \CC^\times$. 
Let $v \in M$ be primitive and $h \in \CC[v^\perp] \subset \CC[N]$.
Following \cite{fomin_zelevinsky, GrossHackingKeelClusterAlgebras, ghkk}, we define the automorphism of the function field $\CC(N) = \mathrm{Frac} \ \CC[N]$
\begin{equation*}\label{eq:AlgebraicMutation}
	x^{u} \mapsto x^{u} h^{-\langle {u}, {v} \rangle}
\end{equation*}
which induces a birational map 
\[ \mu_h \colon T_M \dashrightarrow T_M. \]
We call $\mu_h$ an \emph{algebraic mutation}, and $h$ the factor of the mutation.
If we extend $v$ to a basis $e_1=v, e_2, \dots e_n$ of $M$ and $x_1, \dots, x_n$ are the coordinates on $T_M$ which correspond to the dual basis $e_1^*,\dots, e_n^*$ of $N$, then $h$ is a Laurent polynomial in $x_2, \dots x_n$ and $\mu_h$ is given by 
\begin{equation*}
	(x_1, \dots x_n) \mapsto (h(x_2, \dots x_n)^{-1}x_1, x_2, \dots x_n).
\end{equation*}
Let $\PP$ be the toric variety defined by the fan consisting of the two rays $\RR_{\geq 0}v$ and $\RR_{\leq 0}v$ in the lattice $M$.
$\PP$ is isomorphic to $\PP^1 \times T_{M/\ZZ v}$, and the projection to $\PP^1$ is induced by the lattice homomorphism $M \onto M/{\ZZ v}$. $\PP$ comes with two toric divisors $D_+$ and $D_-$.
Since $\Hom_\ZZ (M/{\ZZ v}, \ZZ)=v^\perp$, $h$ is canonically a regular function on the torus $T_{M/{\ZZ v}}$, and we write $Z_\pm=\pi^{-1}(\rV(h)) \cap D_{\pm} \subset \PP$, where $\pi \colon \PP = \PP^1 \times T_{M / \ZZ v} \to T_{M / \ZZ v}$ is the second projection.  
Let $b_{\pm} \colon \tilde{\PP}_{\pm} \rightarrow \PP$ be the blowup of $\PP$ at $Z_{\pm}$.

\begin{lemma}[{\cite[Lemma~3.2]{GrossHackingKeelClusterAlgebras}}]\label{lem:mutationextends}	$\mu_h$ extends to a regular isomorphism $\tilde{\PP}_+ \rightarrow~ \tilde{\PP}_-.$
\end{lemma}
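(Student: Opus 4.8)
The plan is to reduce the statement to an explicit computation in affine charts and then assemble the gluing using the universal property of blowing up. First I would fix the basis $e_1 = v, e_2, \dots, e_n$ of $M$ from the excerpt, so that $\PP = \PP^1 \times T_{M/\ZZ v}$ with homogeneous coordinate $x_1$ on the $\PP^1$-factor (vanishing on $D_+$ and equal to $\infty$ on $D_-$), with $T_{M/\ZZ v} = \Spec \CC[x_2^{\pm},\dots,x_n^{\pm}]$, with $h = h(x_2,\dots,x_n)$ pulled back from $T_{M/\ZZ v}$, and with $\mu_h$ given on the dense torus by $(x_1,\dots,x_n) \mapsto (h^{-1}x_1, x_2,\dots,x_n)$. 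In these terms $\mu_h$ extends to the rational map $\PP \dashrightarrow \PP$, $([X_0:X_1],\underline{x}) \mapsto ([X_0 : h(\underline{x})\,X_1],\underline{x})$, which is a morphism away from $Z_+ = D_+ \cap \pi^{-1}(\rV(h)) = \{X_0 = h = 0\}$. (If $h$ is a unit, i.e.\ a scalar times a Laurent monomial, then $Z_\pm = \emptyset$, $\tilde{\PP}_\pm = \PP$, and $\mu_h$ is induced by a lattice shear fixing $v$, hence is already a toric automorphism of $\PP$; in any case the argument below does not require excluding this case, nor does it require $Z_\pm$ or $\tilde{\PP}_\pm$ to be smooth.)

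The core step is to show that $g := \mu_h \circ b_+ \colon \tilde{\PP}_+ \to \PP$ is a morphism. Over $\PP \setminus D_+$ this is immediate, since $b_+$ is an isomorphism there and $Z_+ \subseteq D_+$. Over the complementary chart $\AA^1_{x_1} \times T_{M/\ZZ v}$, a neighbourhood of $D_+$, I would present $\tilde{\PP}_+$ as the blowup of the ideal $(x_1, h)$ and use its two standard affine charts: on the chart $U_+ \cong \AA^1_s \times T_{M/\ZZ v}$ with $b_+^* x_1 = s\,h$, substitution gives $g \colon (s,\underline{x}) \mapsto (x_1 = s, \underline{x})$, which is regular and lands in $\PP \setminus D_-$; and on the chart $B_+ = \{h = u\,x_1\}$, although $g^* x_1 = 1/u$ is not regular, one has $g^*(1/x_1) = u$, so $g$ maps $B_+$ into the chart $\PP \setminus D_+$ of the target and is regular there too. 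Noticing that near the strict transform of $\pi^{-1}(\rV(h))$ the image of $g$ runs into $D_-$, so that one is forced to switch to the opposite affine chart of the target, is the only genuine subtlety, and this is where I expect the main work to sit.

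Next I would show $g$ factors through $b_- \colon \tilde{\PP}_- \to \PP$. By the universal property of blowing up it suffices to check that $g^{-1}\cI_{Z_-} \cdot \cO_{\tilde{\PP}_+}$ is an invertible ideal sheaf, which is local on the charts above. On $U_+$ the image lies in $\PP \setminus D_-$, which is disjoint from $Z_-$, so the pulled-back ideal is the unit ideal. On $B_+$, writing $\cI_{Z_-} = (y_1, h)$ with $y_1 = 1/x_1$, one finds $g^{-1}\cI_{Z_-} \cdot \cO_{B_+} = (u,\, u\,x_1) = (u)$. On $\PP \setminus D_+$, where $g = \mu_h$ and $\mu_h^* y_1 = h\,y_1$, one finds $(h\,y_1,\, h) = (h)$. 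In each case the ideal is principal and generated by a nonzerodivisor, hence invertible, so $g = b_- \circ \tilde g$ for a unique morphism $\tilde g \colon \tilde{\PP}_+ \to \tilde{\PP}_-$; since $b_\pm$ are isomorphisms over $T_M$ and $g|_{T_M} = \mu_h$, this $\tilde g$ extends $\mu_h$.

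Finally, interchanging the roles of $x_1$ and $1/x_1$ (hence of $D_+$ and $D_-$) and replacing $\mu_h$ by $\mu_h^{-1}$, which is $x_1 \mapsto h\,x_1$, the identical argument produces a morphism $\tilde g' \colon \tilde{\PP}_- \to \tilde{\PP}_+$ extending $\mu_h^{-1}$. Since $\tilde g' \circ \tilde g$ and $\tilde g \circ \tilde g'$ agree with the respective identities on a dense open subset of $T_M$ and the $\tilde{\PP}_\pm$ are separated, they equal the identities; hence $\tilde g$ is the desired isomorphism $\tilde{\PP}_+ \to \tilde{\PP}_-$, as claimed.
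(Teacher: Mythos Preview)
Your proof is correct. You work chart-by-chart on $\tilde{\PP}_+$, check that the composite $\mu_h\circ b_+$ is everywhere regular, and then invoke the universal property of the blowup $b_-$ by verifying that the pulled-back ideal sheaf of $Z_-$ is invertible on each chart; the inverse is built symmetrically and the two compositions are identified with the identity by density and separatedness. All the local computations you wrote are right.

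The paper takes a genuinely different, more hands-on route. It realises both blowups concretely as closed subvarieties of $\PP^1_{s,t}\times\PP^1_{x_1,y_1}\times T_{M/\ZZ v}$ via the single equations $x_1 t - y_1 s h = 0$ and $y_1 t - x_1 s h = 0$, and then simply writes down the isomorphism
\[
([s:t],[x_1:y_1],x_2,\dots,x_n)\ \longmapsto\ ([y_1:x_1],[s:t],x_2,\dots,x_n),
\]
checking on the complement of the exceptional divisor that $[s:t]=[x_1:hy_1]$, so that this map really extends $\mu_h$. This avoids the universal property and the chart bookkeeping entirely, at the cost of having to guess (or know) the closed formula; it is shorter and makes manifest the ``swap the two $\PP^1$-factors'' symmetry underlying the statement. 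Your approach is more systematic and would adapt without change to situations where one does not have such a convenient global embedding of the blowup at hand.
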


\[
\xymatrix{
	&  \tilde{\PP}_+ \ar[d]_{b_+} \ar[r]^{\simeq} & \tilde{\PP}_- \ar[d]^{b_-} \\
	Z_+ \ar@{^{(}->}[r] & \PP \ar@{-->}[r] & \PP & Z_- \ar@{_{(}->}[l] \\
	& T_M \ar@{^{(}->}[u] \ar@{-->}[r]^{\mu_h} & T_M \ar@{^{(}->}[u]
}
\]

\begin{proof}
	Note first that $\mu_h$ extends to a birational map on $\PP=\PP^1_{x_1,y_1} \times T_{M/\ZZ v}$ given by	\begin{equation*}
		\mu_h \colon ([x_1 \colon y_1], x_2, \dots x_n)\mapsto 	([x_1 \colon h(x_2, \dots x_n)y_1], x_2, \dots x_n)
	\end{equation*}
	$\mu_h$ is undefined iff $x_1=0$ and $h=0$, i.e.\ exactly at $Z_+$, and $\mu_h^{-1}$ is undefined where $y_1=0$ and $h=0$, i.e.\ at $Z_-$. 
	By definition, $\tilde{\PP}_\pm$ are the subvarieties of $\PP^1_{s,t} \times \PP^1_{x_1,y_1} \times T_{M/\ZZ v}$ cut out by the equations
	\[
	x_1t-y_1sh=0, \quad y_1t-x_1sh=0
	\]
	respectively. Noting that $[s:t]=[x_1:hy_1]$ for $([s:t],[x_1:y_1], x_2, \dots x_n) \in \tilde{\PP}_+$ away from the exceptional divisor, it follows that the isomorphism $\tilde{\PP}_+ \rightarrow \tilde{\PP}_-$ defined by
	\[
	([s:t], [x_1 \colon y_1], x_2, \dots x_n) \mapsto 	([y_1:x_1], [s:t], x_2, \dots x_n)
	\]
	gives the required extension of $\mu_h$. 
\end{proof}
In other words, $\mu_h$ is the map which blows up $Z_+$ and blows down the strict transform of the fibre through $Z_-$. 
%that $\mu_f \colon \PP \dashrightarrow \PP$ extends to a regular isomorphism $\tilde{\PP}_+ \rightarrow \tilde{\PP}_-$, so that $\mu_f$ is the birational map which blows up $Z_{+}$ and contracts the strict transform of $\pi^{-1}(V(f))$ to $p_-\subset \PP$. 

\subsection{Combinatorial mutations of reflexive polygons}

Let us now specialise to the situation at hand, where $N$ and $M$ have rank $2$.
%Let $\Sigma_P$ the normal fan of $P$, pick a ray $v$ of $\Sigma_P$ and choose one of the two generators $w \in v^\perp$. 
We make the following definition, which is a special case of \cite[Definition~5]{sigma}.
\begin{definition}\label{mutationdef}
	Let $P$ be a reflexive polygon in the rank $2$ lattice $N$ and let $v \in M$ be the inner normal to an edge of $P$. Choose a primitive line segment $H \subset v^{\perp} \subset N$.
	For every $d \in \ZZ$, write $P_d$ 
	for the slice of $P$ at height $d$ with respect to $v$, i.e.\
	\[
	P_d = \{ x \in P \mid \langle v, x \rangle = d \}.
	\]
	We assume that $P_d$ is empty for $d>1$ (but see Remark~\ref{rmk:mutation} below).
	Decompose $P_{-1}=R_{-1}+H$ as a Minkowski sum for 
	some line segment $R_{-1}$.
	Then the \emph{combinatorial mutation} of $P$ with respect to $(v, H)$ is defined to be the reflexive polygon
	\[
	P^\dagger = \text{conv} \left( R_{-1} \cup P_0 \cup (P_1+H) \right).
	\]
	
	%We say that the mutation defined by $\varphi_F$ is \emph{primitive} if the vector $F$ is primitive.
\end{definition}
\begin{remark}\label{rmk:mutation}
	This definition is usually stated in more generality; for instance see \cite{sigma}. In particular, we usually do not require $H$ to be primitive, and the condition that $P_d$ be empty for $d>1$ is only needed to ensure that $P^\dagger$ is again a reflexive polygon. If one works with the larger class of Fano polygons, this condition can and should be removed.
\end{remark}
\begin{figure}
	\centering
	\begin{tikzpicture}[scale=1]
		\begin{scope}[blend mode=multiply]
			\draw (0,-1) -- (1,1) -- (-1,1) --(0,-1);
			\node  at (-1,-1) {$\cdot$};
			\node at (0,-1) {$\cdot$};
			\node at (1,-1) {$\cdot$};
			\node  at (-1,0) {$\cdot$};
			\node  at (0,0) {$\times$};
			\node  at (1,0) {$\cdot$};
			\node at (-1,1) {$\cdot$};
			\node  at (0,1) {$\cdot$};
			\node  at (1,1) {$\cdot$};
			%\draw[->,very thick] (01) to node {} (00);
		\end{scope}
		%\qquad  $\xrightarrow{m}$ \qquad  
		\begin{scope}[xshift=7cm]
			\draw (0,-1) -- (1,-1) -- (0,1) --(-1,1)--(0,-1);
			\node  at (-1,-1) {$\cdot$};
			\node at (0,-1) {$\cdot$};
			\node at (1,-1) {$\cdot$};
			\node  at (-1,0) {$\cdot$};
			\node  at (0,0) {$\times$};
			\node  at (1,0) {$\cdot$};
			\node at (-1,1) {$\cdot$};
			\node  at (0,1) {$\cdot$};
			\node  at (1,1) {$\cdot$};
		\end{scope}
		\draw[->] (2.3,0) -- (4.8,0)node[midway,fill=white]{$\mathrm{mut}_{F,w}$};
	\end{tikzpicture}
	\caption{Mutation of the polygon $P_{4c}$ with respect to mutation data given by $v=(0,-1)$ and $H=\Newt{1+x}$.
		The mutated polygon is $\SL_2(\ZZ)$-equivalent to $P_{4a}$.}
	\label{fig:combinatorialmutation}
\end{figure}
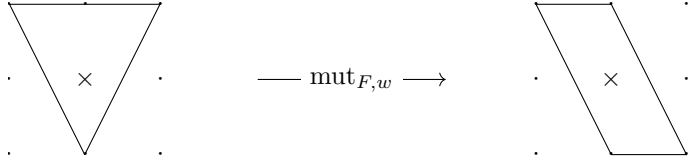
The effect of a mutation on the normal fan $\Sigma_P$ of $P$ is easy to describe.
Note that $\Sigma_P$ contains the ray $\RR_{\geq 0}v$, whereas $\Sigma_{P^\dagger}$, the normal fan of the mutated polygon $P^\dagger$, contains the ray $\RR_{\leq 0}v$.
Let $\Sigma_+$ (resp.\ $\Sigma_-$) be the fan obtained by adding the ray $\RR_{\leq 0}v$ (resp.\ $\RR_{\geq 0} v$) to $\Sigma_P$ (resp.\ $\Sigma_{P^\dagger}$).
Assume $H = \mathrm{conv}(0,w)$, where $w \in N$ is primitive.
Define the piecewise linear map
\[
\trop \colon M_\RR \rightarrow M_\RR, \quad m \mapsto m-\min\{0, \langle m,w \rangle \} v
\]
%\comm{Andrea: in my opinion it is
	%\[
	%\trop \colon N_\RR \rightarrow N_\RR, \quad n \mapsto n - \min \left\{ 0, \langle n, v \rangle  \right\} w.
	%\]
	%}
%where for $\lambda \in \RR$, we set $[\lambda]_-=\min(0,\lambda)$.
$\trop$ acts as the identity on the half space $\langle \cdot , w \rangle >0$ and acts as a simple shear on the half space $\langle \cdot , w \rangle <0$.
%Pass to a toric blowup so that $\Sigma_P$ and $\Sigma_{P'}$ both contain the rays $\RR_{\geq 0}v$ and $\RR_{\leq 0}v$.
Then we see that $\Sigma_-$ is obtained by applying $\trop$ to each ray of $\Sigma_+$.
Let $D_+$ be the toric divisor on $Y_P$ corresponding to $\RR_{\geq 0}v$ in $\Sigma_+$ and $D_-$ be the toric divisor on $Y_{P^\dagger}$ corresponding to $\RR_{\leq 0}v$ in $\Sigma_{-}$. Let $v_0=~v,v_1, \dots v_m$ be the inner normals to the edges of $P$. Define $D_{j,+}$ to be the divisor corresponding to $\RR_{\geq 0}v_j$ in $\Sigma_+$ and $D_{j,-}$ be the divisor corresponding to $\RR_{\geq 0}\trop(v_j)$ in $\Sigma_{-}$. In particular, we have that $D_+=D_{0,+}=D_{0,-}$. 
Given $h \in \CC[N]$, and a fan $\Sigma  \subset M$, we write $\bar{V}(h)$ for the closure of $V(h) \subset T_M$ in $Y_\Sigma$. 

Let $h=1+x^w$ (note that $\Newt{h}=H$) and define $Z_{\pm}=\bar{V}(h) \cap D_{\pm}$.
Let $w_0=w$ (recall that $w \in v^\perp$), and for each other $j$, choose a primitive generator $w_j$ of $v_j^\perp$ and set 
\[
h_j=\begin{cases}
	(1+x^{w_j})^{\ell_j} \quad \text{for} \; j \neq 0\\
	(1+x^{w_j})^{\ell_j-1} \quad \text{for} \; j = 0\\
\end{cases}
\]
where $\ell_j$ the lattice length of the edge corresponding to $v_j$. 
Following \cite[Lemma 3.6]{GrossHackingKeelClusterAlgebras}, we define 
\begin{align*}
	Z_{j,+}&=\bar{V}(h_j) \cap D_{j,+} \\
	Z_{j,-}&=
	\begin{cases}
		\bar{V}(h_j) \cap D_{j,-} &\quad \text{if} \;\langle w,v_j \rangle\geq 0\\
		\bar{V}((1+x^{w_j+\langle w_j,v \rangle w})^{\ell_j}) \cap D_{j,-} &\quad \text{if}\; \langle w,v_j \rangle< 0
	\end{cases}
\end{align*}
 
 Note that the divisor given by the sum of the points $Z_+$ and the $Z_{j_+}$ on the toric boundary of $Y_{\Sigma_+}$
%the union of the points $p_e$ on the edge $D_{P, e}$ (see Remark~\ref{rmk:1}), counted with multiplicity $\ell(e)$, i.e.
is by construction the base locus of the pencil of $f_P$. 
Note also that $D_{j,-}$ has inner normal $v_j'=v_j-\langle v_j, w \rangle v$ so that a primitive generator for $v_j'^\perp$ is given by $w_j+\langle w_j,v \rangle w$. It follows that the divisor given by the sum of $Z_-$ and the $Z_{j_-}$ on the toric boundary of $Y_{\Sigma_-}$
%is the union of the points $p_e$ on the edge $D_{P^\dagger, e}$, counted with multiplicity $\ell(e)$, i.e.\ 
is the base locus of the pencil of $f_{P^\dagger}$.

We have the following result, which is a strengthening of \cite[Lemma 3.6]{GrossHackingKeelClusterAlgebras} for the very special situation at hand. We will closely follow their proof, adapting it to our notation.

\begin{prop} \label{prop:mutation_eq_polygons_give_the_same_res}
	Let $v\in M$, let $w \in N$,  let $H=\mathrm{conv}(0,w)$ and let $h, Z_{\pm}, Z_{j, \pm}$ as above. Suppose that $P$ and $P^\dagger$ are reflexive polygons such that $P^\dagger$ is obtained from $P$ by a mutation with respect to $(v, H)$. 
	Let $Y$ (resp.\ $Y^\dagger$) be the rational elliptic surface obtained from the polygon $P$ (resp.\ $P^\dagger$) as in Construction~\ref{constr:2}.  
	Then $Y$ and $Y^\dagger$ are isomorphic. 
\end{prop}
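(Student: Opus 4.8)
The plan is to follow the proof of \cite[Lemma~3.6]{GrossHackingKeelClusterAlgebras}, and to upgrade the birational identification it produces to an honest isomorphism of rational elliptic surfaces by keeping track of the two anticanonical pencils. By Construction~\ref{constr:2} both $Y \to \PP^1$ and $Y^\dagger \to \PP^1$ are obtained from a smooth toric surface by blowing up the base scheme of such a pencil, and the algebraic mutation $\mu_h$ is built precisely so as to interchange the two pencils; Lemma~\ref{lem:mutationextends} then turns $\mu_h$ into an isomorphism after the relevant blow-ups, and one descends to $Y$ and $Y^\dagger$ by running the relatively minimal model programme for the resulting (non-minimal) elliptic fibrations.

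First I would work on the common big torus $T_M$, which is a dense open subset of both $Y_P$ and $Y_{P^\dagger}$. There $\bfone_P$ and $\bfone_{P^\dagger}$ both restrict to the constant $1$, while $f_P$ and $f_{P^\dagger}$ restrict to the Minkowski Laurent polynomials of $P$ and $P^\dagger$ (with coefficient $\binom{\ell(e)}{i}$ at the $i$-th lattice point of an edge $e$ of lattice length $\ell(e)$, vertices counted once). The crucial identity is $\mu_h^*(f_{P^\dagger}) = f_P$ in $\CC(N)$, which says exactly that $\mu_h$ carries the rational map $T_M \dashrightarrow \PP^1$ defined by the pencil of $f_P$ to the one defined by the pencil of $f_{P^\dagger}$. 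I would prove it by a direct computation organised by the slices $P_d = \{x \in P \mid \langle v, x\rangle = d\}$ for $d \in \{-1,0,1\}$ (recall $P_d = \emptyset$ for $d > 1$): on the height-$0$ part $\mu_h$ is the identity, since it fixes $\CC[v^\perp]$; the monomials at height $-1$, which are all multiplied by $h = 1+x^w$ under $\mu_h^*$, reproduce the height-$(-1)$ part of $f_P$ thanks to the Minkowski decomposition $P_{-1} = R_{-1} + H$; the monomials at height $+1$ are multiplied by $h^{-1}$, but the negative power disappears because they occur in pairs $x^u, x^{u+w}$ with $x^u h^{-1} + x^{u+w} h^{-1} = x^u$, recovering the height-$1$ part of $f_P$; and the lateral edges are matched (with the error terms telescoping around $\partial P$) using that $\trop$ shears the inner normal $v_j$ to $v_j' = v_j - \langle v_j, w\rangle v$ and, correspondingly, a primitive generator $w_j$ of $v_j^\perp$ to the generator $w_j + \langle w_j, v\rangle w$ of $(v_j')^\perp$.

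Next I would globalise, using the toric data already set up before the statement. Pick a smooth complete fan $\Sigma_+$ refining both a smooth resolution of $\Sigma_P$ and $\Sigma_P \cup \{\RR_{\leq 0}v\}$, and set $\Sigma_- = \trop(\Sigma_+)$; since $\trop$ is unimodular and linear on each of the half-spaces $\{\pm\langle\cdot,w\rangle \geq 0\}$ and no cone of $\Sigma_+$ crosses the wall $w^\perp$ (it contains the rays $\pm\RR_{\geq 0}v$), the fan $\Sigma_-$ is again smooth and refines $\Sigma_{P^\dagger} \cup \{\RR_{\geq 0}v\}$, and $\mu_h$ becomes a birational map $Y_{\Sigma_+} \dashrightarrow Y_{\Sigma_-}$. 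Let $W$ be the blow-up of $Y_{\Sigma_+}$ along the subscheme $Z_+ + \sum_j Z_{j,+}$ of its toric boundary, which — as recorded above — is the base scheme of the pencil of $f_P$, and let $W^\dagger$ be the analogous blow-up of $Y_{\Sigma_-}$ along $Z_- + \sum_j Z_{j,-}$. Running the argument of Lemma~\ref{lem:mutationextends} over each pair $(D_{j,+}, D_{j,-})$ in turn, exactly as in the proof of \cite[Lemma~3.6]{GrossHackingKeelClusterAlgebras} (the compatibility of $h$ and of the factors $h_j$ with $\trop$ being what makes this work), shows that $\mu_h$ extends to an isomorphism $W \xrightarrow{\ \sim\ } W^\dagger$, and by the first step this isomorphism commutes with the two morphisms to $\PP^1$. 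Finally, the base scheme of the pencil is disjoint from the torus-fixed points and from the singular points of $Y_P$, so $W$ is obtained from $Y$ by a sequence of blow-ups at points of the fibre over $\infty$ (the curves exceptional for $Y_{\Sigma_+}$ over the minimal resolution of $Y_P$); likewise $W^\dagger$ is obtained from $Y^\dagger$. Hence $Y$ is the relatively minimal model of $W \to \PP^1$ and $Y^\dagger$ that of $W^\dagger \to \PP^1$, and the isomorphism $W \cong W^\dagger$ over $\PP^1$ descends to the desired isomorphism $Y \cong Y^\dagger$.

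The step I expect to be the main obstacle is the identity $\mu_h^*(f_{P^\dagger}) = f_P$: this is the one place where the combinatorics of a combinatorial mutation (the Minkowski decomposition of the bottom slice, the shearing $v_j \mapsto v_j'$ of the lateral normals, the emptiness hypothesis $P_d = \emptyset$ for $d > 1$) must be reconciled with the bare algebra of $\mu_h$, and everything else rests on it together with the cited lemmas. The secondary point needing genuine care is, in the globalisation step, verifying that the extension of $\mu_h$ carries each marked point $Z_{j,+}$ to $Z_{j,-}$ — this is precisely the content of the proof of \cite[Lemma~3.6]{GrossHackingKeelClusterAlgebras}, which we would transcribe into the present notation, and it is the reason the present statement is a \emph{strengthening} of that lemma rather than a direct application of it.
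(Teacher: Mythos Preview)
Your proposal is correct and close in spirit to the paper's argument, but it is organised differently and carries some extra baggage. The paper works directly with the (generally singular) toric surfaces $Y_{\Sigma_\pm}$ obtained by adding only the single missing ray $\mp\RR_{\geq 0}v$ to the normal fans. It shows that $\mu_h$ extends to a regular isomorphism after blowing up \emph{only} $Z_+$ and $Z_-$ (using that in dimension $2$ the open sets $\tilde{\PP}_+$ and $U_\rho\setminus\bar V(h)$ already cover the blow-up), and then verifies by an explicit coordinate computation that this isomorphism carries each remaining base point $Z_{j,+}$ to $Z_{j,-}$. Since the sum of $Z_+$ and the $Z_{j,+}$ is exactly the base locus of $\mathfrak d_P$, this immediately yields $Y\simeq Y^\dagger$ without any passage to a further refinement or to a relatively minimal model.

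Compared with this, you make two detours. First, you front-load the identity $\mu_h^*(f_{P^\dagger})=f_P$ on $T_M$; the paper alludes to this in the informal discussion preceding the proof but does not use it in the formal argument, which proceeds entirely via the boundary calculation $\mu_h(Z_{j,+})=Z_{j,-}$. Second, you pass to a smooth refinement $\Sigma_+$ and then contract back via the relatively minimal model. This is valid, but unnecessary: the covering argument in the paper already handles the singular $Y_{\Sigma_\pm}$, and since blowing up the base locus on $Y_{\Sigma_\pm}$ literally produces $Y$ and $Y^\dagger$, no descent is needed. Your phrase ``running the argument of Lemma~\ref{lem:mutationextends} over each pair $(D_{j,+},D_{j,-})$'' is also slightly misleading: the lemma is only invoked for the single pair $(D_+,D_-)$, while on every other $D_\rho$ the map $\mu_h$ already extends regularly (since $D_\rho\cap\bar V(h)=\emptyset$ whenever $\langle w,\rho\rangle\neq 0$). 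What remains for the other pairs is the computation that $\mu_h$ sends the marked point $Z_{j,+}$ to $Z_{j,-}$, which is the part of \cite[Lemma~3.6]{GrossHackingKeelClusterAlgebras} you correctly flag as the secondary point needing care.
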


The idea of the proof is that algebraic mutations (for Laurent polynomials in $2$ variables) and combinatorial mutations (for polygons) are actually the same thing. We try to informally explain this now. If $P$ and $P^\dagger$ are related via a combinatorial mutation, then the pencils $\pencilP$ and $\mathfrak{d}_{P^\dagger}$ are closely related. Indeed, if we consider the sections $f_P$ and $f_{P^\dagger}$ of the anticanonical line bundles of toric surfaces $Y_P$ and $Y_{P^\dagger}$ as in Construction~\ref{constr:1} and we consider their restrictions to the tori $T_M \subset Y_P$ and $T_M \subset Y_{P^\dagger}$, then these restrictions are regular functions on $T_M$ (i.e.\ elements of the ring $\CC[N]$) and they are related via an algebraic mutation $T_M \dashrightarrow T_M$. The reason is that if one applies the functor ``Newton polytope'' to an algebraic mutations between Laurent polynomials then one gets a combinatorial mutation between their Newton polytopes. This birational selfmap of the torus $T_M$ gives rise to an isomorphism between $Y$ and $Y^\dagger$ (which are the rational elliptic surfaces associated to $P$ and to $P^\dagger$, respectively).

\begin{proof}[Proof of Proposition~\ref{prop:mutation_eq_polygons_give_the_same_res}]
	We first show that the mutation $\mu_h \colon T_M \dashrightarrow T_M$ extends to a regular isomorphism after only blowing up $Z_{+}$ on $Y_{\Sigma_+}$ and $Z_{-}$ on $Y_{\Sigma_-}$, and then show that $\mu_h(Z_{j,+})=Z_{j,-}$. This will show that $\mu_h$ extends to an isomorphism after blowing up the base locus of $f_P$ on $Y_{\Sigma_+}$ and  the base locus of $f_{P^\dagger}$ on $Y_{\Sigma_-}$. By definition of $Y$ and $Y^\dagger$, this will then give the required result.
	
	Abusing notation, let temporarily $Y$ be the blowup of $Y_{\Sigma_+}$ along $Z_+$ and $Y^\dagger$ the blowup of $Y_{\Sigma_-}$ along $Z_-$. 
	Let $U \subset Y$ be the union of $\tilde{\PP}_+$ for $v=v_0$ and the open subsets of the form $U_\rho \setminus \bar{V}(h)$ where $\rho$ ranges over rays of $\Sigma_P$ not equal to $\RR_{\geq 0}v$ or $\RR_{\leq 0}v$, and $U_\rho$ is the affine toric variety associated to the fan with only one ray $\rho$, i.e.\ the union of the dense torus and the toric divisor $D_\rho$. 
	We claim that in our situation, these open sets actually cover $Y$. Indeed, note that $D_\rho \cap \bar{V}(h)=\emptyset$ if $\langle w, \rho \rangle \neq 0$, since then either $x^{w}$ or $x^{-w}$ vanishes along $D_\rho$ and therefore $\bar{V}(1+x^{w})=\bar{V}(1+x^{-w})$. So we only fail to cover codimension $2$ sets of the form $D_\rho \cap \bar{V}(h)$ such that $w$ is zero on $\rho$. Since we are in dimension $2$, this can only happen if $\rho=\RR_{\geq 0}v$ or $\RR_{\leq 0}v$, so by definition of the range of $\rho$, there are no such sets, and therefore $U=Y$.
	
	By Lemma~\ref{lem:mutationextends}, $\mu_h$ extends to a well-defined morphism on the open set isomorphic to $\tilde{\PP}_+$, so we need to check that $\mu_h$ is well-defined on the remaining sets.
	If $\langle w, \rho \rangle>0$, then $h_j \equiv 1$ on $D_\rho$. For any $n \in \rho^\vee \cap N=\trop(\rho)^\vee \cap N$, we have that  
	\[
	\mu_h^*(x^n)=x^nh^{-\langle n, v \rangle}
	\]
	so it follows that $\mu_h$ takes regular functions to regular functions on $U_\rho \setminus \bar{V}(h)$. 
	If $\langle w, \rho \rangle<0$, then $h_j$ is not defined on $D_\rho$. 
	For any $n \in \trop(\rho)^\vee \cap N$, we have that  
	\[
	\mu_h^*(x^n)=x^n(1+x^{w})^{-\langle n, v \rangle}=x^{n-\langle n, v \rangle w}(1+x^{-w})^{-\langle n, v \rangle}
	\]
	and $n-\langle n, v \rangle w \in \rho^\vee$ by definition of $\trop$, so that this is again a regular function on $U_\rho \setminus \bar{V}(h)$. This shows that $\mu_h$ extends to a regular morphism on $Y$, and we can repeat the same argument for $\mu_h^{-1}$ to show that $\mu_h$ defines an isomorphism $Y \rightarrow Y^\dagger$.
	
	To complete the proof, it suffices to show that $\mu_h(Z_{j,+})=Z_{j,-}$.
	We work  by cases again. If $\langle w, v_j \rangle > 0$, then $h|_{D_j} \equiv 1$, so
	$\mu_h^*(f_j)|_{D_j}=f_j|_{D_j}$. If $\langle w, v_j \rangle=0$, then we must have $\langle w_j, v \rangle=0$ as well, so that $\mu_h^*x^{w_j}=x^{w_j}$ and hence also $\mu_h^*f_j=f_j$.  
	If $\langle w, v_j \rangle \leq 0$, then (noting the definition of $Z_{j,-}$ in this case)
	\begin{align*}
		\mu_h^*((1+x^{w_j+\langle w_j,v \rangle w})^{\ell_j})&=(1+x^{w_j+\langle w_j,v \rangle w}(1+x^w)^{-\langle w_j, v \rangle})^{\ell_j}\\
		&=(1+x^{w_j}(1+x^{-w})^{-\langle w_j, v \rangle})^{\ell_j}
	\end{align*}
	Since $x^{-w}$ vanishes along $D_j$ in this case, we obtain that 
	\[
	\mu_h^*((1+x^{w_j+\langle w_j,v \rangle w})^{\ell_j})|_{D_j}=(1+x^{w_j})^{\ell_j}|_{D_j}
	\]
	and therefore that $\mu_h(Z_{j,+})=Z_{j,-}$, as required.
\end{proof}

Proposition~\ref{prop:mutation_eq_polygons_give_the_same_res} explains why, with only one exception, two reflexive polygons with the same volume give the same rational elliptic surface (see Remark~\ref{rmk:different_polygons_can_give_the_same_surface}).
Indeed, from Figure~\ref{fig:combinatorialmutation}, we have that $P_{4c}$ and $P_{4a}$ are mutation equivalent, hence the corresponding rational elliptic surfaces are isomorphic. However, one could prove that $P_{4a}$ is not mutation equivalent to $P_{4b}$, and indeed their corresponding rational elliptic surfaces are different.
More generally, it is easy to verify that if $P$ and $P^\dagger$ are reflexive polygons with the same volume and their volume is different from $4$, then $P$ and $P^\dagger$ are mutation equivalent.
This explains why there are so few rational elliptic surfaces in Table~\ref{tab:summary_singular_fibre}.

\section{Periods of Laurent polynomials}
In this section we recall the notion of classical period of a Laurent polynomial\footnote{
The notion of classical period we present here is the one that appears in the Mirror Symmetry program~\cite{CCGG}. 
It also arises in more recent developments on Mirror Symmetry by Mandel \cite[\S 1.4]{mandel_fano_mirror}.}
%In Mandel's work \cite{mandel_fano_mirror}, this notion is reinterpreted in terms of the Frobenius structure conjecture of Gross--Hacking--Keel. For a detailed discussion, we refer the reader to \cite[\S 1.4]{mandel_fano_mirror}.
%The work by Mandel \cite{mandel_fano_mirror} on periods of mirrors to Fano varieties contains a different looking notion of classical period arising from versions of the Frobenius structure conjecture of Gross--Hacking--Keel. We refer the reader to \cite[\S 1.4]{mandel_fano_mirror} for a discussion of the relation between the two notions.
 and we describe the local systems encoding the variation of cohomology of the elliptic fibrations $f \colon Y \to \PP^1$ studied in \S\ref{sec:analysis}.

%Let $P$ be a reflexive polygon and let $f_P \in H^0(Y_P, O_{Y_P}(D_P))$ as in Section \ref{sec:rational-reflexive}. In this Section we discuss the relation between the Picard--Fuchs operator of $f_P$, thought as a Laurent polynomial in two variables, and the variation of first homology of the elliptic fibration $Y \to \PP^1$. We use the same notation of Section \ref{sec:analysis}. \smallskip

%We start by recalling the notion of classical period of a Laurent polynomial.

\subsection{The classical period of a Laurent polynomial}
Let $(\CC^\times)^n$ be an algebraic torus with coordinates $x_1, \dots, x_n$.

\begin{definition}[{\cite[Definition 3.1]{CCGG}}] \label{eq:def-classical-period}
	Let $g \colon (\CC^\times)^n \to \CC$ be a Laurent polynomial, i.e.\ an element of the ring $ \CC[x_1^{\pm 1} , \dots, x_n^{\pm 1}]$. 
	The \emph{classical period} of $g$ is defined as:
	\begin{equation}
		\pi_g(t)=\int_{\Gamma}	\frac{1}{1-t g} \Omega
	\end{equation}
	where
	\[
	\Omega=\left(\frac{1}{2\pi \mathrm{i}} \right)^n \frac{\rmd x_1 \wedge \cdots \wedge \rmd x_n}
	{x_1 \cdots x_n}
	\]
	 is the normalised holomorphic volume form on $(\CC^\times)^n$, and 
	$\Gamma=\left( |x_1|=\dots = |x_n|=1
	\right)  \subset (\CC^\times)^n$ is the real compact torus.
	For $|t|$ very small, we have $\Gamma \subset 
	(\CC^\times)^n \setminus (1-tg=0)$, thus the integral is well defined. 
	%Note that $\pi_f(0)=1$. 
\end{definition}

The period $\pi_g(t)$ is solution to a differential operator  $L \in \CC\langle t, D\rangle$, where $D=t \frac{\rmd}{\rmd t}$. 
To see this, one can use the fact that
$\pi_g$ is a specialisation of certain 
%integrals which are 
solutions to Gel'fand--Kapranov--Zelevinsky (GKZ) hypergeometric systems, see \cite[Theorem 3.2]{CCGG}. 
%Alternatively, one can observe that there is a surjection of $\mathcal{D}$-modules $\mathcal{D} \cdot \frac{1}{1-tf}\;\Omega  \onto  \mathcal{D} \cdot \pi$, where $\mathcal{D}$ is the sheaf of differential operators on $\CC \setminus$ critical locus of fibration. 
%the D-module structure is the one induced by the Gauss-Manin connection...
%(Griffiths, Periods of certain rational integrals)

\begin{definition}[{\cite[Definition 3.3]{CCGG}}]
	Write $L \in  \CC\langle t, D\rangle$ as  $L=\sum_{k=0}^h p_k(t) D^k$, with $p_k \in \CC[t]$.
	The \emph{Picard--Fuchs operator} $L_g$  of a Laurent polynomial $g$ is the unique operator  (up to multiplication by  a constant)
	such that $L_g \cdot \pi_g=0$, the integer $h$ is as small as possible and, once $h$ is fixed, the degree $\deg p_k$ is  as small as possible. We refer to the integer $h$ as the \emph{order} of $L_g$. Note that $L_g$ only depends on $\pi_g$, that is, $L_g=L_{g^\dagger}$ if $\pi_g=\pi_{g^\dagger}$.
\end{definition} 

%It is well-known that $L_f$ is a regular operator.
The local system of solutions of $L_g$ is a complex local system of rank $h$ on $U=\PP^1 \setminus S$, where $S$ is the set of singularities of $L_g$.\footnote{The operator $L_g$ can have apparent singularities, i.e.\ singularities around which the monodromy representation is trivial. Here $S$ is the set of genuine singularities of $L_g$.} We denote it by $\mathrm{Sol}(L_g)$.

\begin{remark} \label{rem:recursion}
	Write $L \in  \CC\langle t, D\rangle$ as  $L=\sum_{j=0}^{l} t^j P_j(D)$, with	$P_j(D) \in \CC[D]$. 
	A formal series $\sum_{m=0}^\infty c_m t^m $ is annihilated by the differential operator $L$ if and only if 
	it satisfies the \emph{linear recursion relation}:
	\[
	\sum_{j\leq m} P_j(m-j)c_{m-j}=0 \ \forall m \geq 0.
	\]
	By expanding $\frac{1}{1-tg}$ in power series and applying iteratively the Residue Theorem, one finds that around $t=0$ the classical period $\pi_g$ is defined by the power series:
	\begin{equation}\label{eq:period-expansion}
		\pi_{g}(t)= \sum_{m=0}^\infty c_\mathrm{1}(g^m) t^m \in \CC \pow{t}
	\end{equation}
where $c_\mathrm{1}$ denotes the coefficient of the monomial $\mathrm{1}$.
	Then, one can compute the Picard--Fuchs operator $L_g$ by calculating enough coefficients $c_\mathrm{1}(g^m)$ of the series \eqref{eq:period-expansion} to guess the linear recursion relation.
\end{remark}

\begin{remark} \label{rem:period-invariance}
	Let $g$ be a Laurent polynomial, let $\mu \colon (\CC^\times)^n \dashrightarrow (\CC^\times)^n$ be a volume-preserving birational map, and let $g^\dagger = g \circ \mu$. In general $g^\dagger$  is not a Laurent polynomial but only a rational function. Despite this, it still makes sense to define the period $\pi_{g^\dagger}$ as above.
	Then, an application of the change-of-variables formula to \eqref{eq:def-classical-period} gives that 
	$\pi_{g}=\pi_{g^\dagger}$.
	Observe that any algebraic mutation, introduced in \S\ref{sec:algebraic_mutations}, is volume preserving. Therefore Laurent polynomials that are mutation-equivalent have the same classical period.
	%In fact, any volume preserving birational map $\phi \colon (\CC^\times)^n \dashrightarrow (\CC^\times)^n$ factors as a composition of algebraic mutations. %reference 
	%Say more?
\end{remark}
%\smallskip

\subsection{Certain Laurent polynomials on reflexive polygons and their periods} \label{sec:Laurent-fP}

Fix a reflexive polygon $P$ in a rank $2$ lattice $N$.
In Construction~\ref{constr:1} we have introduced the $T_M$-toric surface $Y_P$ and a specific section $f_P$ of a line bundle $L_P$ on $Y_P$.
This line bundle is canonically trivial away from the toric boundary $D_P \subset Y_P$ because $D_P$ is in the linear system $| L_P |$, therefore the restriction of $f_P$ to $T_M = Y_P \setminus D_P$ can be identified with a regular function on the torus $T_M = \Spec \CC[N]$, i.e.\ with a Laurent polynomial in $\CC[N]$.
With small abuse of notation, we use the symbol $f_P$ also to denote  this Laurent polynomial.
\[
\xymatrix{
& Y \ar[d] \ar@/^2pc/[dd]^f \\
T_M \ar@{^{(}->}[r] \ar[d]_{f_P} \ar@{^{(}->}[ru] &Y_P \ar@{-->}[d]     \\
\AA^1 \ar@{^{(}->}[r] & \PP^1
}
\]
In other words, $f_P$ is the Laurent polynomial in $\CC[N]$ with zero coefficient for the constant monomial, and binomial coefficients for the monomials corresponding to the edges of $P$. 
We can consider the classical period $\pi_{f_P}$.

\begin{comment}
			It is easy to check that, if $P$ and $P^\dagger$ are reflexive polygons in $N$ and $P^\dagger$ is a mutation of $P$, then $f_{P^\dagger}=f_P \circ \mu$, where $ \mu$ is the algebraic mutation induced by the combinatorial one (see also Section  \ref{sec:smooth} below). Then by Remark \ref{rem:period-invariance} we have  that $\pi_{f_P}=\pi_{f_{P^\prime}}$, thus also $L_{f_P}=L_{f_{P^\prime}}$. \textcolor{red}{In W's section, define what is the induced algebraic mutation?} 
\end{comment}
%if $P$ and $P^\prime$ are mutation-equivalent
%lattice isomorphism

The curve $(1-t f_P=0) \subset (\CC^\times)^2$ along which the integrand form in the definition of  $\pi_{f_P}(t)$ (see \eqref{eq:def-classical-period}) is meromorphic in the intersection\footnote{
%The minus sign in front of $f$ is due to our convention that $\pencilP$ is defined by $\lambda \mathrm{\textbf{1}}+f_P$.
The minus sign in front of $-1/t$ is due to our conventions at the beginning of \S\ref{sec:analysis}.}
\[ F_{-1/t} \cap T_M    \simeq 
f^{-1}(-1/t) \cap T_M.  \] 
%$F_\lambda \cap T_M \subset Y_P$, where $\lambda=-1/t$. 
The Picard--Fuchs operator $L_{f_P}$ is an irreducible order-two differential operator, thus the local system $\mathrm{Sol}(L_{f_P})$ is an irreducible rank-two local system on the complement $U=\PP^1 \setminus S$ of the singularities of  $L_{f_P}$. 

It follows that we have the identities:
\begin{equation} \label{eq:local-systems}
	\mathrm{Sol}(L_{f_P})=\mathrm{gr}_1^W R^1 ({f_P}_{U })_{!}\; \ZZ = 
	R^1 {(f_{U})}_{ !} \; \ZZ
\end{equation}
where ${f_P}_U$ is the restriction of $f_P \colon (\CC^\times)^2 \to \CC$ to the preimage of $U$ via $f_P$ and $f_U$ is the restriction of $f \colon Y \to \PP^1$ to the preimage of $U$ via $f$.
Indeed, on the one hand, $\mathrm{Sol}(L_{f_P})$ is an irreducible summand of $\mathrm{gr}_1^W R^1 ({f_P}_{U })_{!} \ZZ$
(see \cite[Remark 3.4]{CCGG}), on the other hand, $\mathrm{gr}_1^W R^1 ({f_P}_{U })_{!} \ZZ= R^1 {(f_{U})}_{ !}  \ZZ
$ has rank two since $f$ is an elliptic fibration.
%begin{remark} To add period as period on Y
%end{remark}
\begin{comment}
It follows that we have the identities:
\begin{equation} \label{eq:local-systems}
	\mathrm{Sol}(L_{f_P})=\mathrm{gr}_1^W R^1 ({f_P}_{U })_{!}\; \ZZ = 
	R^1 {(-f_{U})}_{ !} \; \ZZ
\end{equation}
where
\[
{f_P}_U \colon f_P^{-1}(U) \to U
\]
 is the restriction of $f_P \colon (\CC^\times)^2 \to \CC$ and
 \[
 -f_U \colon (-f)^{-1}(U)\to U
 \]
 is the restriction of $-f \colon Y \to \PP^1$.
Indeed, on the one hand, $\mathrm{Sol}(L_{f_P})$ is an irreducible summand of $\mathrm{gr}_1^W R^1 ({f_P}_{U })_{!} \ZZ$
(see \cite[Remark 3.4]{CCGG}), on the other hand, $\mathrm{gr}_1^W R^1 ({f_P}_{U })_{!} \ZZ= R^1 {(-f_{U})}_{ !}  \ZZ
$ has rank two since $f$ is an elliptic fibration.
%begin{remark} To add period as period on Y
%end{remark}
\end{comment}

\begin{example} We continue our running Example~\ref{ex:cubic_2}, so $P=P_3$. Then $\pi_{f_P}$ is the series:	 
	\[ \pi_{f_P}(t)=\sum_{j=0}^{\infty} \frac{(3j)!}{(j)!^3} t^{3j}
	\] This series satisfies the two-term recursion: 
	\[  j^2 c_{3j} - 3(3j-1)(3j-2) c_{3j-3}=0  \quad \forall j \geq 1
	\] 
	Setting $P_0(3j)=j^2$ we have that $P_0(0) \cdot c_0=0$. 
	%P_0(D)=(D/3)^2 , P_1(D)=-3(D+2)(D+1)
	Then by Remark \ref{rem:recursion} the Picard--Fuchs operator $L_{f_P}$ is the irreducible order-two operator:
	\[ L_{f_P}=\frac{1}{27} \cdot  D^2 - t^3 (D+2)(D+1)
	\]
	Note that $L_{f_P}$ is singular at $t=0, \frac{1}{3}, \zeta \cdot \frac{1}{3}, \zeta^2 \cdot \frac{1}{3}, \infty$, with $\zeta$ a primitive third root of unity. The point $t=\infty$ is an apparent singularity of $L_{f_P}$.
	This is consistent with our analysis of the local system $R^1 f_! \ZZ$ (i.e.\ of the singular  fibres and of the monodromy) of the family of curves $f \colon Y \to \PP^1$ in \S\ref{sec:3}.
\end{example}

\section{Mirror Symmetry for del Pezzo surfaces}

Here we put our explicit examples into the broader context of Mirror Symmetry for del Pezzo surfaces.
Our presentation is  necessarily limited and  may not include all relevant citations.\footnote{We apologise in advance for any omission.}

The Fano/Landau--Ginzburg (LG) correspondence predicts that the mirror of a Fano orbifold $X$, i.e.\ a canonical stack whose coarse moduli space is a Fano variety with quotient singularities,
is an LG model, i.e.\ a pair $(M,w)$, where $M$ is a non-compact manifold (carrying a complex and a symplectic structure) and $w$ is a complex-valued function on $M$ called (super)potential.
%Roughly speaking, the bounded derived category of coherent sheaves on the Fano variety is expected to be equivalent to some sort of Fukaya--Seidel category of $(M,w)$
At a categorical level, a formulation of the correspondence\footnote{A parallel formulation, translating the Hodge-theoretic version of Mirror Symmetry given here, is an equivalence between the Fukaya category of the Fano variety and the category of matrix factorisations of $(M,w)$, see \cite[Remark 1.2]{AKO} and the references therein.} predicts an equivalence between the bounded derived category of coherent sheaves on $X$
 and a suitable analog of the Fukaya category for the symplectic fibration $w \colon M \to \CC$ %namely the derived category of Lagrangian vanishing cycles of $(M,w)$ 
 -- we refer the reader to \cite{AKO, AKOwps, auroux_Tduality,  hacking_keating} and to the references therein.
At a Hodge-theoretic level, 
the correspondence
is interpreted as 
an identity between 
%two local systems,  defined by the quantum cohomology of $X$ and by a distinguished period of $(M,w)$ respectively.
two cohomological invariants:  the regularised quantum period of $X$ (which is a generating function for certain genus-$0$ Gromov--Witten invariants of $X$), and a distinguished period of the mirror $(M,w)$ -- see \cite{victor_calabiyau_compactifications, victor_compactifications, MR3430830, CCGK, CCGG, sigma, victor_LG,victor_toric_LG, victor_models, katzarkov_victor, victor_degenerations, golyshev, mandel_fano_mirror, alessio_cluster} and the references therein.

When an $n$-dimensional Fano orbifold $X$ has a $\QQ$-Gorenstein  (qG) degeneration to a toric variety, it is expected that the LG model $M$ is covered by open subsets isomorphic to $(\CC^*)^n$;
the restriction of $w$ to each torus chart gives a Laurent polynomial.
We say that a Fano variety $X$ is \emph{mirror} to a Laurent polynomial $g$ if the regularised quantum period of $X$ coincides with the classical period of $g$.

This is one of the most %easily statable 
straightforward definitions of Mirror Symmetry for Fano varieties.
For related and/or more refined versions, in the case of smooth del Pezzo surfaces, we refer the reader to the works \cite{AKO, carl_pumperla_siebert}, which construct and study LG mirrors with proper potentials, and to \cite{proper_LG_potential_open_mirror_map}, which studies the relation between the infinite torus charts of the LG model -- see also \cite{thomas_P2} in the case of $\PP^2$, \cite{ruddat_siebert, ruddat_tropical_cycle}  for more insights into the tropical geometry related to Mirror Symmetry, and \cite{ghk, ghks, gs_annals, gs_intrinsic, gs_intrinsic_utah, mandel_theta} and the references therein for more insights into the algebraic geometry related to Mirror Symmetry.

%Work by Coates, Corti, Galkin, Golyshev, Kasprzyk, Przyjalkowski, and others  suggests that the correspondence may be understood concretely for Fano orbifolds with $\QQ$-Gorenstein  (qG) degenerations to a toric variety by restricting the mirror LG model to %a family of a Laurent polynomial.

%The work \cite{MR3430830} formulates several precise conjectures about Mirror Symmetry for orbifold del Pezzo surfaces. The simplest case in which the conjectures are proved is the smooth case. Below we interpret the outcomes of our computations in \S\ref{sec:analysis} from the point of view of Mirror Symmetry for smooth del Pezzo surfaces.

%\bigskip

%If $X$ is a Fano variety (with quotient singularities) it is possible to define a (family of) power series, which is a generating function for certain genus $0$ Gromov--Witten invariants of $X$: this is called the \emph{quantum period} of $X$ (see \cite[\S3.3]{OP}). One can perform an easy combinatorial procedure to get  the \emph{regularised quantum period} of $X$.

The relation between degenerations of smooth del Pezzo surface to toric Gorenstein del Pezzo surfaces and Mirror Symmetry is explained in the following result, which is contained in the cited works above in different flavours and here is stated in the language and notation used in this article:

\begin{theorem}
	Let $P$ be a reflexive polygon in the rank $2$ lattice $N$. Let $X_P$ (resp.\ $Y_P$) be the $T_N$-toric (resp.\ $T_M$-toric) del Pezzo surface associated to the face (resp.\ normal) fan of $P$.
	\begin{itemize}
		\item Let $X$ be a general smoothing of $X_P$; so $X$ is a smooth del Pezzo surface with very ample anticanonical class.
		\item Let $Y \to Y_P$ be the blowup described in Construction~\ref{constr:2} and let $Y \to \PP^1$ be the elliptic fibration.  Let $f_P \in \CC[N]$ be the Laurent polynomial, with Newton polytope $P$, discussed in \S\ref{sec:Laurent-fP}.
	\end{itemize}
	Then $f_P$ is mirror to $X$.
\end{theorem}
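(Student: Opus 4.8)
The plan is to prove the power-series identity $\widehat{G}_X = \pi_{f_P}$, where $\widehat{G}_X$ denotes the regularised quantum period of $X$ and $\pi_{f_P}$ is the classical period of the Laurent polynomial $f_P$ (Definition~\ref{eq:def-classical-period}); by the definition of \emph{mirror} recalled above, this is exactly the content of the theorem.

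\textbf{Reduction to eight cases.} First I would use the results of the previous two sections to cut down the number of polygons to consider. Since an algebraic mutation $\mu_h$ is volume-preserving, Remark~\ref{rem:period-invariance} gives $\pi_g = \pi_{g \circ \mu_h}$ for every Laurent polynomial $g$; moreover, as is implicit in the proof of Proposition~\ref{prop:mutation_eq_polygons_give_the_same_res} (applying the functor ``Newton polytope'' to $\mu_h$), if $P^\dagger$ is a combinatorial mutation of $P$ then $f_{P^\dagger} = f_P \circ \mu_h$. Hence $\pi_{f_P}$ depends only on the mutation-equivalence class of $P$. As recalled after Proposition~\ref{prop:mutation_eq_polygons_give_the_same_res}, reflexive polygons of the same volume $v \ne 4$ are all mutation-equivalent, while for $v = 4$ there are exactly the two classes $\{P_{4a}, P_{4c}\}$ and $\{P_{4b}\}$; inspecting the toric surfaces one checks that all polygons in a fixed mutation class have the same general smoothing $X$ of $X_P$ — the only subtle volume being $v = 4$, where $X_{P_{4a}} = \PP^1 \times \PP^1$ and $X_{P_{4c}} = \PP(1,1,2)$ both smooth to $\PP^1 \times \PP^1$, whereas $X_{P_{4b}} = \FF_1$. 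Therefore it suffices to prove $\widehat{G}_X = \pi_{f_P}$ for one representative of each of the eight mutation classes, equivalently for each of the eight deformation families of smooth del Pezzo surfaces with very ample anticanonical class: $\PP^2$, $\PP^1 \times \PP^1$, $\FF_1$, and $\mathrm{Bl}_k \PP^2$ for $2 \le k \le 6$.

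\textbf{Matching the two sides.} On the Landau--Ginzburg side, by Remark~\ref{rem:recursion} the classical period is $\pi_{f_P}(t) = \sum_{m \ge 0} c_{\mathrm 1}(f_P^m)\, t^m$, so computing finitely many constant terms $c_{\mathrm 1}(f_P^m)$ determines the linear recursion they satisfy and hence the Picard--Fuchs operator $L_{f_P}$, which as discussed in \S\ref{sec:Laurent-fP} is irreducible of order $2$; this is carried out in the running example for $P = P_3$, where $L_{f_{P_3}} = \tfrac{1}{27} D^2 - t^3 (D+1)(D+2)$. On the Fano side, the regularised quantum period $\widehat{G}_X$ of each of the eight del Pezzo surfaces is known: it is computed in \cite{CCGK}, and ultimately rests on the mirror theorem of Givental and Hori--Vafa for the toric del Pezzo $X_P$ together with $\QQ$-Gorenstein deformation-invariance of the quantum period (or else on a direct Gromov--Witten computation for the smooth surface). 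The operator that $\widehat{G}_X$ satisfies is again of order $2$ with $\widehat{G}_X(0) = 1$. Comparing it with $L_{f_P}$ — equivalently, comparing sufficiently many Taylor coefficients of $\widehat{G}_X$ with the $c_{\mathrm 1}(f_P^m)$ — one sees the two operators coincide; since such an operator has a regular singular point at $t = 0$ with $0$ a double indicial root, its holomorphic solution is unique up to scalar, and $\widehat{G}_X(0) = \pi_{f_P}(0) = 1$ forces $\widehat{G}_X = \pi_{f_P}$.

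\textbf{Conceptual reason and main obstacle.} Behind this numerical coincidence lies the identification \eqref{eq:local-systems}: $\mathrm{Sol}(L_{f_P}) = R^1(f_U)_! \ZZ$ is precisely the variation of Hodge structure of the elliptic fibration $f \colon Y \to \PP^1$ built in Construction~\ref{constr:2}, and the regularised quantum period of $X$ is a distinguished flat section of (the dual of) this same variation, in accordance with the Fano/Landau--Ginzburg correspondence for del Pezzo surfaces (see \cite{AKO}, \cite{CCGK}, and the references above). I expect the main obstacle to be not any single deep input but the case-by-case bookkeeping in the middle step: sixteen polygons, eight mutation classes, eight quantum periods, the delicate point being the precise match between the ``Minkowski'' edge coefficients $\binom{\ell(e)}{i}$ defining $f_P$ in Construction~\ref{constr:1} and the Laurent polynomials used on the quantum side in \cite{CCGK}; once that dictionary is in place, each of the eight verifications reduces to a short computation of constant terms.
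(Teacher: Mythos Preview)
The paper does not actually prove this theorem: it is stated as a result ``contained in the cited works above in different flavours'' (the paragraph immediately preceding the statement), with the proof deferred entirely to the references \cite{CCGK}, \cite{CCGG}, \cite{sigma}, etc. So there is no proof in the paper to compare your proposal against.

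That said, your strategy is sound and is essentially the approach taken in the cited literature, particularly \cite{CCGK} and \cite{CCGG}: reduce to mutation classes via Remark~\ref{rem:period-invariance}, identify the general smoothing $X$ for each class, and then match $\widehat{G}_X$ with $\pi_{f_P}$ using the known quantum periods of del Pezzo surfaces. Your observation that the ``Minkowski'' binomial coefficients on the edges of $P$ are exactly what is needed to match the Laurent polynomials in \cite{CCGK} is the key bookkeeping point, and you have identified it correctly. One small caveat: the claim that mutation-equivalent polygons yield deformation-equivalent toric surfaces $X_P$ (hence the same general smoothing $X$) is not proved in this paper either; the paper cites \cite{ilten_sigma} for this just after the theorem, so you should invoke that reference rather than ``inspecting the toric surfaces''.
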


So, the mirror of (the deformation family of) the smoothings of the (possibly singular) toric del Pezzo surface $X_P$ is an open part of the elliptic fibration $f \colon Y \to \PP^1$ constructed in Construction~\ref{constr:2} from $P$.
One could also see that the $8$ mutation-equivalence classes of reflexive polygons $1$-to-$1$ correspond to the $8$ deformation families of smooth del Pezzo surfaces with very ample anticanonical class (namely, $\PP^1 \times \PP^1$ and the blowup of $\PP^2$ in at most $6$ points).

\begin{example}[{Interpretation of reflexive polygons of volume $4$}]
	We saw that $P_{4a}$ is mutation-equivalent to $P_{4c}$, but not to $P_{4b}$.
	Indeed, $\PP(1,1,2)=X_{P_{4c}}$ (the quadric cone) deforms to $\PP^1 \times \PP^1=X_{P_{4a}}$ (the quadric surface), but not to $\FF_1=X_{P_{4b}}$.
\end{example}

In general, one can prove that two del Pezzo surfaces which are associated to the face fan of mutation-equivalent polygons are actually deformation-equivalent (see \cite{ilten_sigma} and also \cite{petracci_homogeneous}).

One can also treat Mirror Symmetry for smooth del Pezzo surfaces whose anticanonical class is not very ample: there is a toric degeneration to a non-Gorenstein toric surface and the mirror is related to a non-reflexive polygon \cite{MR3430830   ,  wendelin}.
The same is true for del Pezzo surfaces with cyclic quotient singularities which admit toric degenerations.
If there is no toric degeneration, there are no polygons involved and no systematic mirror construction exists. An ad hoc mirror construction for an explicit family of del Pezzo surfaces without a toric degeneration is given in \cite{corti_gugiatti}.

\bibliography{Reflexive_polygons_biblio}

\end{document}